\documentclass[11pt]{article}

\usepackage[a4paper,margin=1.1in]{geometry}
\usepackage{amsmath,amssymb,amsthm,mathtools}
\usepackage[T1]{fontenc}
\usepackage{lmodern}
\usepackage{microtype}
\usepackage{enumitem}
\usepackage{hyperref}
\usepackage{aliascnt}
\usepackage[nameinlink,noabbrev,capitalise]{cleveref}

\numberwithin{equation}{section}

\hypersetup{
  hidelinks
}

\newtheorem{theorem}{Theorem}[section]

\newaliascnt{proposition}{theorem}
\newtheorem{proposition}[proposition]{Proposition}
\aliascntresetthe{proposition}

\newaliascnt{lemma}{theorem}
\newtheorem{lemma}[lemma]{Lemma}
\aliascntresetthe{lemma}

\newaliascnt{corollary}{theorem}
\newtheorem{corollary}[corollary]{Corollary}
\aliascntresetthe{corollary}

\theoremstyle{remark}
\newaliascnt{remark}{theorem}
\newtheorem{remark}[remark]{Remark}
\aliascntresetthe{remark}

\theoremstyle{plain}

\crefname{equation}{equation}{equations}
\Crefname{equation}{Equation}{Equations}

\crefname{theorem}{Theorem}{Theorems}
\Crefname{theorem}{Theorem}{Theorems}

\crefname{proposition}{Proposition}{Propositions}
\Crefname{proposition}{Proposition}{Propositions}

\crefname{lemma}{Lemma}{Lemmas}
\Crefname{lemma}{Lemma}{Lemmas}

\crefname{corollary}{Corollary}{Corollaries}
\Crefname{corollary}{Corollary}{Corollaries}

\crefname{remark}{Remark}{Remarks}
\Crefname{remark}{Remark}{Remarks}

\newcommand{\PP}{\mathbf P}
\newcommand{\ZZ}{\mathbf Z}
\newcommand{\Aut}{\operatorname{Aut}}
\newcommand{\AutL}{\operatorname{Aut}_L}
\newcommand{\Gr}{\operatorname{Gr}}
\newcommand{\Sym}{\operatorname{Sym}}
\newcommand{\Cl}{\operatorname{Cl}}

\newcommand{\GL}{\operatorname{GL}}
\newcommand{\PGL}{\operatorname{PGL}}
\newcommand{\Img}{\operatorname{Im}}
\newcommand{\Ker}{\operatorname{Ker}}
\newcommand{\Span}{\operatorname{Span}}
\newcommand{\rk}{\operatorname{rk}}
\newcommand{\id}{\mathrm{id}}
\newcommand{\Pic}{\operatorname{Pic}}
\newcommand{\Hom}{\operatorname{Hom}}

\title{Generic Triviality of Linear Automorphisms of Complete Intersections}

\author{
Zhaoning Cui$^{\alpha,\beta}$\\[0.5em]
\small $^\alpha$ Institute for Theoretical and Mathematical Physics, Lomonosov Moscow State University\\
\small $^\beta$ Department of Mechanics and Mathematics, Lomonosov Moscow State University}

\date{}

\begin{document}

\maketitle

\begin{abstract}
We prove that a general smooth complete intersection of positive dimension and
codimension at least \(2\) over an algebraically closed field has trivial linear
automorphism group, except in the case of two quadrics. The proof uses incidence
varieties and treats separately prime-to-characteristic and unipotent automorphisms.
\end{abstract}

\setcounter{tocdepth}{2}
\setcounter{secnumdepth}{2}

\tableofcontents

\section{Introduction}
This article is about linear automorphism groups of complete intersections. In the hypersurface case, Matsumura and Monsky proved that the linear
automorphism group of every smooth hypersurface
\(Y\subset \PP^N_k\) of degree \(d\ge 3\), with \(N\ge 3\), is finite,
and is trivial for a generic hypersurface of the same degree
\cite[Theorems~1 and~5]{MatsumuraMonsky1963}.

Later, Chen, Pan, and Zhang proved that, for a general smooth complete
intersection of dimension at least \(2\) and multidegree different from
\((2,2)\), the linear automorphism group is trivial in characteristic \(0\),
while in characteristic \(p>0\) it is a finite \(p\)-group
\cite[Theorem~1.3(1)]{ChenPanZhang2024}.
They also determined the linear automorphism group of a general intersection
of two quadrics in characteristic different from \(2\) is not trivial
\cite[Theorem~1.3(2)]{ChenPanZhang2024}. More recently, Lyu and Zhang developed a monodromy-theoretic criterion
which reduces generic triviality in a family to the exclusion of a possible
involution, and applied it to establish generic triviality for several broad
classes of complete intersections in characteristic different from \(2\)
\cite[Theorems~1.2 and~2.1]{LyuZhang2024}.

Motivated by these developments, we study the remaining problems from a direct
incidence-theoretic point of view.

Let \(k\) be an algebraically closed field, and let \(X\subset \PP^n_k\) be a smooth
complete intersection of codimension \(c<n\). We write \(\Aut_L(X)\) for linear automorphism group, that is the subgroup of
\(\Aut(X)\) induced by projective linear transformations of \(\PP^n\).

The goal of this paper is to prove the following result.

\subsection{Main result}

\begin{theorem}\label{thm:main}
Let \(k\) be an algebraically closed field.

Let \(X\subset \PP^n_k\) be a general smooth complete intersection of multidegree
\[
2\le d_1\le \cdots \le d_c,\qquad c\ge 2,\qquad n-c\ge 1.
\]
Assume \((d_1,\dots,d_c)\neq (2,2)\). Then \(\Aut_L(X)=\{1\}\).
\end{theorem}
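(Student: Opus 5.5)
The plan is an incidence-variety dimension count. Let \(V=\prod_{i=1}^c H^0(\PP^n,\mathcal O(d_i))\) and let \(U\subset V\) be the open set of tuples \(F=(F_1,\dots,F_c)\) cutting out a smooth complete intersection \(X_F\) of codimension \(c\). Put \(G=\PGL_{n+1}\). For an element \(g\neq 1\) we ask when \(g(X_F)=X_F\). This happens exactly when \(g\) preserves the graded ideal \((F_1,\dots,F_c)\), that is, when \(g^*\) preserves the flag of subspaces \(I_F\cap H^0(\mathcal O(e))\) for \(e=d_1,\dots,d_c\). Since every nontrivial element of \(G\) has a power of prime order, and every such power also fixes \(X_F\), it suffices to show two things. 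First, a general \(X_F\) is fixed by no element of prime order \(\ell\ne \operatorname{char}k\); these elements are semisimple. Second, when \(p=\operatorname{char}k>0\), a general \(X_F\) is fixed by no element of order \(p\); these elements are unipotent. Each prime-order element lies in finitely many conjugacy-class types. For each type we form the incidence variety \(\{(F,g): g(X_F)=X_F\}\) and show that its image in \(U\) (or in the moduli of ideals) has positive codimension.

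\textbf{Semisimple case.} Take \(g=\operatorname{diag}(\zeta^{a_0},\dots,\zeta^{a_n})\) with eigenspace decomposition \(k^{n+1}=\bigoplus_\chi W_\chi\). If \(g\) preserves \(I_F\), we may choose the generators \(F_i\) to be \(g\)-eigenvectors. (This needs care when several \(d_i\) coincide. We use that \(g\) acts semisimply on the finite-dimensional space \(I_F\cap H^0(\mathcal O(e))\), and that lower-degree pieces are preserved.) So \(X_F\) is defined by eigenpolynomials. We then compare dimensions. The family of such ideals has dimension \(\sum_i \dim H^0(\mathcal O(d_i))_{\chi_i}\) minus the automorphisms of the flag. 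Adding the dimension of the conjugacy class \(\dim G-\dim C_G(g)=(n+1)^2-\sum_\chi(\dim W_\chi)^2\), the total must be smaller than \(\dim\) of the space of ideals, which is the product of Grassmannians/flag data of dimension \(\sum_i h^0(\mathcal O(d_i))\) minus the dimension of the group of generator changes.

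Concretely, it suffices to show
\[
\sum_i \operatorname{codim}\bigl(H^0(\mathcal O(d_i))_{\chi_i}\subset H^0(\mathcal O(d_i))\bigr) \;>\; (n+1)^2-\sum_\chi(\dim W_\chi)^2,
\]
with a correction term for the blocks of equal degrees. The worst case is a reflection-type \(g\) with eigenspaces of dimensions \(n\) and \(1\), where the right-hand side is \(2n\). There the codimension of eigenpolynomials of degree \(d\) is about \(\binom{n+d-1}{d-1}\) or \(\binom{n+d-1}{d}\), so the inequality holds as soon as some \(d_i\ge 3\) or \(c\ge 3\). We also need smoothness to rule out degenerate types. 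For instance, a reflection forces each \(F_i\) to be even or odd in the special variable, and the odd ones make \(X_F\) singular. The case \((2,2,\dots)\) with \(c\ge 3\) quadrics is handled by the extra quadric contributing another codimension.

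\textbf{Unipotent case.} Conjugate \(g\) to Jordan form with \(g=1+N\), \(N^p=0\). Invariance of \(I_F\) under \(g\) is equivalent to invariance under the Lie-type operator \(g^*-1\), which is nilpotent on each graded piece. The centralizer and conjugacy-class dimensions are again explicit. The locus of invariant subspaces has codimension at least the dimension of the coinvariants of \(g^*-1\) restricted to the relevant pieces. The minimal case is a single transvection \(x_1\mapsto x_1+x_0\), whose class has dimension \(2n\). For a transvection, we show the invariant \(F\) of degree \(d\) satisfy \(F(x_0,x_1+x_0,\dots)-F\in \mathrm{span}(F_j:d_j\le d)\). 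This imposes roughly \(\binom{n+d-1}{d-1}-c\) conditions, which again exceeds \(2n\) outside \((2,2)\).

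The main obstacle is the low-degree boundary cases: \((2,3)\) and \((2,2,2)\) with \(n\) small, and the nonsmooth-eigenbasis issues when \(d_i\) coincide. Coarse counts may fail there, and I would treat those by explicit normal forms, showing invariant members are singular.
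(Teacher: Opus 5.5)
Your overall strategy matches the paper's: reduce to prime-order elements, split into the semisimple and unipotent cases, and for each conjugacy class compare the codimension of the invariant locus with the dimension of the class. But there are two genuine gaps.

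\textbf{The reduction step.} The sentence ``every nontrivial element of \(G\) has a power of prime order'' is false in \(\PGL_{n+1}\). For example, in characteristic \(0\) a diagonal element whose eigenvalue ratio is not a root of unity has infinite order, and so does any nontrivial unipotent element. Neither has a power of prime order. So you first need \(\Aut_L(X)\) to be finite.

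Even granting that, ``finitely many conjugacy-class types'' only holds for each fixed prime \(\ell\). As \(\ell\) ranges over all primes you get countably many classes. A countable union of proper closed subsets of your parameter space need not lie in a proper closed subset; over a countable field such as \(\overline{\mathbf Q}\) it can exhaust all points. So your argument would give at best a ``very general'' statement.

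The paper closes this gap with Benoist's theorem. The incidence \(\mathcal A\to T^\circ\) is finite, hence \(|\Aut_L(X)|\le M\) uniformly. So only primes \(\ell\le M\) occur, and only finitely many classes must be excluded (\Cref{prop:finite-reduction}). Without such a uniform bound, the reduction does not go through. (An alternative would be a count over the positive-dimensional families of semisimple classes combined with Jordan decomposition, but you do neither.)

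\textbf{The quantitative core.} The inequalities that make up the actual proof are only asserted. ``The worst case is a reflection'' and ``the minimal case is a transvection'' must be proved for every eigenvalue pattern and every Jordan type with \(N^p=0\). The paper does this through:
\begin{itemize}
\item \Cref{lem:d3gap}, giving \(\gamma_d\ge\frac12D_g+2\) for \(d\ge3\);
\item the Cauchy--Schwarz bound of \Cref{lem:opq1} for odd \(\ell\);
\item \Cref{thm:opunif}, proved by induction on Jordan blocks;
\item \Cref{lem:c2sqgr} in characteristic \(2\).
\end{itemize}

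The margins are thin. For an involution one has \(\gamma_2=\frac12D_g\) exactly, for every eigenspace type. So two quadrics give codimension exactly \(D_g\), which is why \((2,2)\) fails. For \((2,2,2)\subset\PP^4\) with a reflection, the crude bound \(r\gamma_2-r(r-1)=6\) is already below \(D_g=8\), so the exact Grassmannian count of \Cref{prop:inv} is needed.

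Your displayed criterion also overcounts in mixed degrees. The difference \(g^*F_j-\chi F_j\) need only lie in \((I_F)_{d_j}\supset S_{d_j-d_i}F_i\), not in \(\operatorname{span}(F_i)\). Your transvection condition has the same error. The paper handles this by working in \(R=S/(Q)\) or \(S/(F)\) and bounding the loss, for instance \(\gamma_d(R)\ge\gamma_2-1\) and \(\Omega_3(R)\ge\Omega_3(V)-\dim V\).

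Finally, the cases you defer are exactly the delicate ones: \((2,3)\), \((2,2,2)\), small \(n\), and \((2,2,b^s)\), where the quadrics alone give no margin. For \((2,2,b^s)\) the paper avoids a count altogether. It uses finiteness of \(\Aut_L(Y)\) together with \Cref{lem:kill}, and \Cref{lem:2q2g} in odd characteristic.
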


The exception \((2,2)\) is genuine. In characteristic different from \(2\), a
general pencil of quadrics is diagonal and coordinate sign changes give nontrivial
linear automorphisms. In characteristic \(2\), see Dolgachev--Duncan for
\(X\subset \PP^{2m}\) and Zhang for \(X\subset \PP^{2m+1}\)
\cite[Theorems~1.6 and~7.4]{DolgachevDuncan2018},
\cite[Corollary~2.8, Theorem~1.3 and Corollary~4.5]{YangZhang2026}.

\begin{corollary}\label{cor:lin}
Let \(X\subset \PP^n_k\) be a smooth complete intersection. Then
\(\Aut(X)=\Aut_L(X)\) in the following cases:
\(
\dim X\ge 3,
\)
and
\(
\dim X=2
 \text{ with } 
\sum_{i=1}^c d_i\neq n+1.
\)
The remaining surface case is the \(K3\) case
\(\sum_{i=1}^c d_i=n+1\), while in dimension \(1\) the comparison between
\(\Aut(X)\) and \(\Aut_L(X)\) is more subtle.
\end{corollary}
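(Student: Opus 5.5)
The corollary says that a smooth complete intersection \(X\subset\PP^n_k\) has only linear automorphisms once \(\dim X\ge 3\), or \(\dim X=2\) and \(X\) is not a \(K3\) surface. The plan is to show that every automorphism \(g\in\Aut(X)\) preserves the hyperplane class \(H=\mathcal O_X(1)\), and then to show that \(g\) is induced by a projective linear map. For the second step, suppose \(g^*\mathcal O_X(1)\cong\mathcal O_X(1)\). Then \(g^*\) acts on \(H^0(X,\mathcal O_X(1))\). Since \(X\) is a complete intersection of degrees \(d_i\ge 2\), it is projectively normal, and the restriction map
\[
H^0(\PP^n,\mathcal O(1))\to H^0(X,\mathcal O_X(1))
\]
is an isomorphism. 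This follows from the Koszul resolution together with the vanishing \(H^1(\PP^n,\mathcal I_X(1))=0\) and \(H^0(\PP^n,\mathcal I_X(1))=0\); the latter holds because no \(d_i\) equals \(1\). Hence the action of \(g^*\) is given by a matrix in \(\GL_{n+1}(k)\). The induced projective transformation agrees with \(g\) on \(X\), because \(X\) is embedded by the complete linear system \(|H|\). So \(g\in\Aut_L(X)\).

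The first step is to prove \(g^*H\cong H\). When \(\dim X\ge 3\), the Grothendieck--Lefschetz theorem gives \(\Pic(X)=\ZZ H\). This holds in any characteristic for smooth complete intersections of dimension at least \(3\), via SGA2 applied successively to each hypersurface in the sequence. Then \(g^*H\) is an ample generator of \(\ZZ H\), and the only ample generator is \(H\). When \(\dim X=2\), the Picard group can be large, so I will instead use the canonical class. Adjunction gives
\[
\omega_X\cong\mathcal O_X\Bigl(\sum d_i-n-1\Bigr),
\]
and \(\omega_X\) is preserved by every automorphism. Set \(m=\sum d_i-n-1\), which is nonzero by hypothesis. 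Then \(g^*H^{\otimes m}\cong H^{\otimes m}\), so \(g^*H-H\) is a torsion class. The Lefschetz-type theorem also holds for surfaces in the weak form that \(\Pic(X)\) is torsion-free. Indeed \(H^1(X,\mathcal O_X)=0\) and \(X\) is simply connected, since for a complete intersection of dimension at least \(1\) the étale fundamental group is trivial by SGA2, and hence \(\Pic\) has no torsion. Therefore \(g^*H=H\). The case \(m<0\), where \(X\) is a del Pezzo surface, is covered by the same argument because only \(m\neq0\) is used.

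The main obstacle I expect is justifying torsion-freeness of \(\Pic(X)\) uniformly in positive characteristic, including \(p\)-torsion. For prime-to-\(p\) torsion, simple connectedness of the étale fundamental group suffices. For \(p\)-torsion, I would argue that \(\operatorname{Pic}^\tau\) is trivial: the vanishing \(H^1(X,\mathcal O_X)=0\) shows that \(\operatorname{Pic}^0\) is trivial, including its non-reduced structure. A \(p\)-torsion line bundle would then come from a \(\mu_p\)-torsor, which is controlled by the fppf cohomology \(H^1_{fl}(X,\mu_p)\). This group sits in an exact sequence with \(H^0(X,\mathcal O_X^*)/p\) and the \(p\)-torsion of \(\Pic\). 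I would try to show that the relevant piece of Frobenius-killed classes injects into \(H^1(X,\mathcal O_X)=0\), which would force \(\Pic(X)[p]=0\). I would cite the standard result for smooth complete intersections rather than reprove it. The remaining statements about the \(K3\) case and about curves are remarks, not claims, and need no proof.
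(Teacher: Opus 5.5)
Your proof is correct and follows the paper's argument: Grothendieck--Lefschetz for $\dim X\ge 3$, adjunction plus torsion-freeness of $\Pic(X)$ for surfaces with $\sum d_i\neq n+1$, and linear normality to show that any automorphism preserving $\mathcal O_X(1)$ is linear (the paper simply asserts torsion-freeness). Your extra justification of torsion-freeness has two slips: simple connectedness of complete intersections needs $\dim X\ge 2$ (plane cubics are not simply connected), and $H^1(X,\mathcal O_X)=0$ does not by itself exclude $p$-torsion (classical Enriques surfaces in characteristic $2$ are counterexamples) --- the right tool is the $\mathrm{d}\log$ injection $\Pic(X)[p]\hookrightarrow H^0(X,\Omega^1_X)$, combined with $H^0(X,\Omega^1_X)=0$ for smooth complete intersections of dimension $\ge 2$, which follows from the conormal and Euler sequences.
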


\subsection{Method of proof}

Our proof is based on a finite incidence-theoretic reduction, following the incidence
construction of Lyu--Zhang \cite[Proposition~2.15, pp.~16--19]{LyuZhang2024}. Let
\(T^\circ\) be the parameter space of smooth complete intersections of the prescribed
multidegree. First, using Benoist's theorem together with the finiteness
result for relative automorphism groups recalled by Lyu--Zhang, we reduce the problem to
excluding automorphisms of prime order; see \Cref{prop:finite-reduction}.

For a fixed conjugacy class \(\Cl(g)\subset \PGL(V)\), we consider the incidence locus
\[
\mathfrak I_g
:=
\{ (h,X)\in \Cl(g)\times T^\circ \mid h(X)=X \}.
\]
Its image in \(T^\circ\) is precisely the locus of smooth complete intersections
stabilized by some element of \(\Cl(g)\). Thus the desired generality statement follows
once one proves
\(
\dim \mathfrak I_g<\dim T^\circ .
\)
The rest of the argument is devoted to proving this dimension estimate for the finitely
many conjugacy classes left by the reduction.

The rest of the paper is devoted to proving the required incidence estimates. The
prime-to-characteristic case is semisimple, while the order-\(p\) cases in positive
characteristic are unipotent. In characteristic \(2\), the order-\(2\) case requires a
separate square-zero argument, so we treat it independently.

\subsubsection{Automorphisms of order prime to \texorpdfstring{$p$}{p}}

In the prime-to-characteristic case, the relevant automorphisms are semisimple by
Maschke's theorem, and the problem is reduced to estimating loci of stable subspaces in
suitable Grassmannians. We prove the following theorem.

\begin{theorem}\label{thm:ptp}
Let \(k\) be an algebraically closed field of characteristic \(p\ge 0\). Let \(X\subset
\PP^n_k\) be a general smooth complete intersection of multidegree
\[
2\le d_1\le \cdots \le d_c,\qquad c\ge 2,\qquad n-c\ge 1.
\]
Assume \((d_1,\dots,d_c)\neq (2,2)\). Then \(X\) admits no nontrivial linear
automorphism whose order is prime to \(p\), where in characteristic \(0\) this means
every finite order.
\end{theorem}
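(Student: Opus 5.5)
By the incidence reduction described above, it suffices to prove the following: for every element \(g\in\PGL(V)\), \(V=k^{n+1}\), of prime order \(\ell\neq p\), the incidence variety \(\mathfrak I_g\) satisfies \(\dim\mathfrak I_g<\dim T^\circ\). Since \(\ell\) is prime, the prime-order reduction of \Cref{prop:finite-reduction} is enough, and there are only finitely many conjugacy classes to treat.

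\textbf{Setup.} Lift \(g\) to \(\tilde g\in\GL(V)\) with \(\tilde g^\ell\) a scalar, and rescale so that \(\tilde g^\ell=1\). Since \(\ell\neq p\), \(\tilde g\) is diagonalizable with eigenspaces \(V=\bigoplus_{\zeta\in\mu_\ell}V_\zeta\), of dimensions \(m_\zeta\). At least two \(m_\zeta\) are nonzero, because \(g\neq 1\).

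The complete intersection \(X\) corresponds to a flag of graded pieces of its ideal: for each distinct degree \(e\) among the \(d_i\), a subspace \(W_e\subset S^eV^*\) of the appropriate dimension, modulo the lower-degree part. The condition \(g(X)=X\) says that every \(W_e\) is \(\tilde g\)-stable. A \(\tilde g\)-stable subspace splits as a direct sum of subspaces of the eigenspaces \((S^eV^*)_\chi\).

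\textbf{Dimension count.} We then have
\[
\dim\mathfrak I_g\le \dim\Cl(g)+\dim\{X:\ g(X)=X\}.
\]
Here
\[
\dim\Cl(g)=\dim\PGL(V)-\dim C(g)=(n+1)^2-\sum_\zeta m_\zeta^2 .
\]
The fixed locus of \(\tilde g\) in the relevant Grassmannian (or flag variety) has dimension equal to the maximum, over distributions \(r=\sum_\chi r_\chi\), of \(\sum_\chi r_\chi(\dim(S^eV^*)_\chi-r_\chi)\). The full Grassmannian has dimension \(r(\dim S^eV^*-r)\). So the codimension drop is at least the cross terms \(\sum_{\chi\neq\chi'}r_\chi(\dim (S^eV^*)_{\chi'}-r_{\chi'})\).

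\textbf{Estimate.} The task is to show this codimension exceeds \((n+1)^2-\sum m_\zeta^2\). To reduce the number of cases, I would first treat the top degree \(d_c\) alone, combined with a coarse count for the others. The key inequality is that for \(d\ge 2\) and a nontrivial splitting of \(V\), each nonzero character space \((S^dV^*)_\chi\) has dimension roughly \(\ge\binom{n+d}{d}/\ell\) when weights are spread out. The worst case is \(\ell=2\) or a reflection-type \(g\), with \(m_1=n\) and \(m_\zeta=1\).

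I would handle that worst case explicitly. Take \(g=\mathrm{diag}(1,\dots,1,\zeta)\). Then the conjugacy class has dimension \(2n\), while \((S^dV^*)_\chi\) decomposes by the power of \(x_n\) modulo \(\ell\). For the extremal case \(\ell=2\), the cross term is about \(\tfrac12\binom{n+d}{d}\cdot\) (codimension-type terms), which beats \(2n\) unless \(d=2\). General configurations \((m_\zeta)\) reduce to this case by monotonicity: merging eigenspaces increases \(\dim C(g)\) and only enlarges the stable locus.

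\textbf{Main obstacle.} The hard part is the case where all \(d_i=2\) with \(c\ge3\), together with small cases such as \((2,3)\), or \(n-c=1\) with small degrees. There the cross terms are small and the naive count may fail. To handle them, I would exploit smoothness: a \(\tilde g\)-stable subspace of quadrics all of which are singular along a common eigenline gives a non-smooth \(X\). This forces the \(r_\chi\) to be balanced (e.g.\ each eigenspace must meet the ideal in a way making \(X\) smooth), which removes the low-dimensional strata. This is analogous to why \((2,2)\) is exceptional: there the diagonal pencil realizes equality in the count. I would verify the \(c\ge3\) quadric case by showing that \(\tilde g\)-stable nets of quadrics whose span has at most two character components cannot cut out a smooth variety unless the count already gives strict inequality. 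This is the step where I expect most case analysis.
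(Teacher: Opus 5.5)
\textbf{There is a genuine gap.} Your framework matches the paper's: reduction to finitely many prime-order classes, the incidence $\mathfrak I_g$, splitting a $\tilde g$-stable subspace into eigen-pieces, and the exact cross-term formula for the codimension. But the proposal omits the estimates that make up the actual proof, and the two shortcuts you offer in their place do not work.

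\textbf{The merging reduction is invalid.} You reduce an arbitrary eigenvalue configuration $(m_\zeta)$ to the reflection type $(n,1)$ by ``merging eigenspaces.'' It is false that merging only enlarges the stable locus. Take $\ell=3$ with weights $(0,1,2)$ on $x,y,z$, and merged weights $(0,1,1)$. Then $x^2$ and $yz$ lie in one $g$-weight space but in different weight spaces of the merged element, so $\langle x^2+yz\rangle$ is stable for $g$ but not for the merged element. Even if the claim were true, merging lowers both $\dim\Cl(g)$ and the codimension of the stable locus, so a strict inequality for the merged element implies nothing for $g$. The paper avoids any such reduction by proving bounds that are uniform in the configuration:
\begin{itemize}
\item for $d\ge3$, a marked-monomial count gives $\gamma_d\ge\frac{\rho}{d}\binom{m+d-2}{d-1}$ with $\rho=m-n_{\max}$; against $D_g\le\rho(2m-\rho-1)$, plus a separate computation for $\rho=1$, this yields $\gamma_d\ge\frac12D_g+2$ (\Cref{lem:d3gap});
\item for quadrics and odd $\ell$, it uses $2q_t=\sum_a n_an_{t-a}+\sum_{2a=t}n_a$ with Cauchy--Schwarz, together with $D_g\ge 2n_{\max}(m-n_{\max})$ (\Cref{lem:opq1,lem:opq2});
\item involutions are handled by the exact stratified count $\Phi(u)>0$.
\end{itemize}

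\textbf{The top degree alone is not enough.} Using the top degree with only a coarse count for the lower degrees fails even inside the range of the theorem. Take $(2,3)\subset\PP^3$ and $g=\mathrm{diag}(1,1,1,-1)$ with $p\neq2$. A smooth $g$-eigenquadric $Q$ must be even, and $R_3=S_3/(QS_1)$ has eigenspaces of dimensions $10$ and $6$. So the cubic condition has codimension exactly $6=D_g$, and strict inequality comes only from adding the quadric's contribution $\gamma_2=3$. This is what the paper's two-step count does (\Cref{rem:2step}, with \Cref{lem:qhalf,lem:qqgap,lem:hqgap} controlling $\gamma_2$, $\gamma_d(S/(Q))$ and $\gamma_b(S/(F))$). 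No smoothness argument is needed.

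\textbf{The obstacle is misplaced.} The case you single out, $(2^r)$ with $r\ge3$, also needs no smoothness: the cross-term formula you wrote down already closes it. For example, for sign changes the stratum $U\subset(S_2)_+$ has codimension $rq_-=\frac r2D_g>D_g$, and the paper checks every stratum. The case that genuinely needs an extra idea is one you never mention, $(2,2,b^s)$. For sign-change involutions the pencil condition has codimension exactly $D_g$, with no slack, because a general smooth intersection of two quadrics is diagonalizable and hence stabilized by sign changes. The paper handles it as follows: fix the smooth two-quadric part $Y$, use finiteness of $\Aut_L(Y)$, show that every nontrivial prime-to-$p$ element acts non-scalarly on $A(Y)_b$, and apply \Cref{lem:kill}. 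As it stands, your smoothness-based case analysis is unspecified, so the proposal does not yet prove the theorem.
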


\subsubsection{Automorphisms of order \texorpdfstring{$p$}{p} in odd characteristic}

Assume now that \(\operatorname{char} k=p>2\). In the order-\(p\) case, the relevant
automorphisms are unipotent, so the semisimple decomposition is no longer available. We
instead use image-rank estimates for the induced nilpotent operator on equation spaces
and compare the resulting codimension with the dimension of the corresponding conjugacy
class. We prove the following theorem.

\begin{theorem}\label{thm:op}
Let \(k\) be an algebraically closed field of characteristic \(p>2\). Let \(X\subset
\PP^n_k\) be a general smooth complete intersection of multidegree
\[
2\le d_1\le \cdots \le d_c,\qquad c\ge 2,\qquad n-c\ge 1.
\]
Assume \((d_1,\dots,d_c)\neq (2,2)\). Then \(X\) admits no nontrivial linear
automorphism of order \(p\).
\end{theorem}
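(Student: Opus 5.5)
We will prove \Cref{thm:op} through the incidence estimate
\[
\dim \mathfrak I_g<\dim T^\circ
\]
for every conjugacy class of elements \(g\in\PGL(V)\) of order \(p\), where \(V=H^0(\PP^n,\mathcal O(1))^\vee\) has dimension \(n+1\).

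\textbf{Step 1: normal form and the dimension of \(\Cl(g)\).} Since \(p\) is odd and \(k\) is algebraically closed, an element of order \(p\) in \(\PGL(V)\) lifts to a unipotent \(u\in\GL(V)\). Indeed, a lift \(\tilde g\) satisfies \(\tilde g^p=\lambda\); rescaling by a \(p\)-th root of \(\lambda\) gives \(\tilde g^p=1\), hence \(\tilde g\) is unipotent. Write \(u=1+N\) with \(N\ne 0\), \(N^p=0\). Its Jordan type is a partition of \(n+1\) with all parts at most \(p\) and at least one part \(\ge 2\). The class dimension is the standard formula
\[
\dim\Cl(g)=(n+1)^2-\sum_i (\lambda_i')^2,
\]
where \(\lambda'\) is the conjugate partition. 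The bound we need is the crude one
\[
\dim\Cl(g)\le 2r(n+1-r),
\]
with \(r=\rk N\). This is an upper bound; the class of a single Jordan block of size \(2\) (rank \(1\)) has dimension exactly \(2n\).

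\textbf{Step 2: the fibre of \(\mathfrak I_g\to\Cl(g)\).} Fix \(h=u\). The condition \(u(X)=X\) says that the graded ideal \(I_X\) is \(u\)-stable. We fibre \(T^\circ\) by the flag of degree strata, grouping equal degrees. For each distinct degree \(e\), let \(W_e\subset S^eV^\vee\) be the span of the equations of degree \(e\) modulo lower-degree multiples, of dimension \(m_e\). Stability means that \(W_e\) (together with \(I_{X,<e}\cdot S\)) is invariant under the induced unipotent operator \(u_e\) on \(S^eV^\vee/(I_{<e})_e\).

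A \(u_e\)-invariant \(m_e\)-dimensional subspace must contain a nonzero fixed vector, and more precisely must meet \(\Img(u_e-1)\) in a controlled way. We bound the codimension of the invariant locus in the Grassmannian, or in the space of tuples of equations, from below by an image-rank count. Namely, choose the equations \(f\) and require \((u-1)f\in\) the span. The codimension is then at least
\[
\rk\bigl((u_e-1)\bigr)-m_e\cdot(\text{something small}),
\]
and what we actually aim to show is
\[
\operatorname{codim}\ \ge\ \rk(u_e-1\mid S^eV^\vee)-m_e.
\]
The key input is a lower bound for \(\rk(u_e-1)\) on \(S^eV^\vee\) in terms of \(r\). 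Since the \(\ker\) of \(u_e-1\) is the invariant ring in degree \(e\), we estimate its dimension. For a single \(2\)-block \(x\mapsto x+y\), with \(y\) and the other variables fixed, the invariants in degree \(e\ge 2\) have dimension \(\dim S^e(\text{fixed})+\) contributions of \(x^p-xy^{p-1}\) only in degree \(\ge p\). This gives
\[
\rk(u_e-1)\ \ge\ \dim S^{e-1}(V_0^\vee),
\]
with \(V_0\) of dimension \(n\), so roughly \(\binom{n+e-1}{e-1}\). For general \(N\) of rank \(r\), we reduce to this case by degenerating \(N\) to a rank-\(1\) piece plus a complement, or by filtering. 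This yields \(\rk(u_e-1)\ge r\cdot\binom{n+e-1-r}{e-1}\)-type bounds.

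\textbf{Step 3: comparison.} We must show
\[
\sum_e \operatorname{codim}_e\ >\ 2r(n+1-r).
\]
Using only the top degree \(d_c\ge 3\) or, when all \(d_i=2\), \(c\ge 3\), the bound \(\rk(u_e-1)-m_e\gtrsim r\binom{n+1}{2}-c\) already beats \(2r(n+1-r)\) in most ranges. The constraint \(c<n\) is used to control the \(-m_e\) loss. The small cases must be checked by hand. These are \(n\) small (\(n-c=1\), curves), multidegree \((2,3)\), \((2,2,2)\) in \(\PP^4\), and \(p=3\), where \(x^3-xy^2\) is an extra cubic invariant. The excluded case \((2,2)\) is exactly where the count fails, which serves as a consistency check.

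\textbf{Main obstacle.} The hard part is Step 2 for mixed multidegrees. Equations of higher degree are only defined modulo \(I_{<e}\), so one must compute \(\rk(u_e-1)\) on the quotient \(S^eV^\vee/(I_{<e})_e\) rather than on \(S^eV^\vee\), and this quotient depends on \(X\). I would first try to avoid it by using only the lowest-degree stratum \(W_{d_1}\), where no quotient occurs, together with the top stratum handled generically. If that is insufficient for \(d_1=2\) and few quadrics, I would instead parametrise tuples \((f_1,\dots,f_c)\) directly, which is an open subset of a product of symmetric powers, and write the stability condition as \((u-1)f_i\in\) an explicit triangular span. This gives the linear-algebra bound without passing to quotients.
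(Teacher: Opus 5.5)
Your framework (unipotent lift, incidence variety, degree strata, image rank of \(u_e-1\)) matches the paper's, but the quantitative core fails.

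\textbf{First}, the bound \(\dim\Cl(g)\le 2r(n+1-r)\) in Step~1 goes the wrong way. Write \(m=n+1\) and \(\lambda'\) for the conjugate partition. Then \(\lambda'_1=m-r\) and \(\sum_{i\ge2}\lambda'_i=r\), so \(\dim\Cl(g)=2mr-r^2-\sum_{i\ge2}(\lambda'_i)^2\ge 2r(m-r)\), with equality exactly when \(N^2=0\). Since \(p\ge3\), classes with \(N^2\neq0\) do occur. Type \((3,1^{m-3})\) has \(r=2\) and \(\dim\Cl(g)=4m-6\). The regular nilpotent (if \(m\le p\)) has \(\dim\Cl(g)=m^2-m\). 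Because \(D=\dim\Cl(g)\) depends on the whole Jordan type and can be of order \(m^2\), rank-only estimates like \(r\binom{n+e-1-r}{e-1}\) cannot work; in degree \(2\) this bound is \(m-1\) for the regular nilpotent.

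\textbf{Second}, your fibre codimension \(\rk(u_e-1)-m_e\) is essentially the cost of constraining one vector, and it is too weak even for the mildest class. For a single \(J_2\) block, \(\Omega_2=\rk(u_2-1)=m\) and \(D=2m-2\), so every pure quadric block \((2^c)\) gives only \(m-c<D\). This is not a small case. The paper supplies the two missing ingredients:
\begin{enumerate}
\item the adapted-flag estimate that \(\Delta\)-stable \(s\)-planes have codimension at least \(s\Omega-s(s-1)\);
\item block-by-block image ranks, namely \(\Omega_2(J_\lambda)=\lfloor\lambda^2/2\rfloor\), \(\Omega_3(J_\lambda)\ge(\lambda-1)(\lambda+2)/2\) and \(\Omega(A\otimes B)\ge\Omega(A)\dim B\).
\end{enumerate}
By induction on Jordan blocks these yield bounds measured against \(D\) itself: \(2\Omega_2-D\ge2\), \(2\Omega_3-D\ge3\) for \(m\ge4\), and \(\Omega_2+\Omega_3-m>D\). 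For \((2^c)\) this gives \(c\Omega_2-c(c-1)-D\ge c-2>0\).

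\textbf{Third}, the mixed-degree step is left open. Parametrising tuples does not remove the difficulty: the condition on a degree-\(b\) equation is still that \((u-1)f\) lie in the lower ideal plus a span, which is a rank computation modulo \(I_{<b}\). The paper handles \((2,b^s)\) and \((a,b^s)\) by fixing the smooth, hence irreducible, lowest form \(F\). This form is automatically \(g\)-invariant, because a unipotent element acts trivially on the line \(\langle F\rangle\). Then \(R=S/(F)\) is a domain, and multiplication by a \(g\)-invariant linear form gives \(\Delta\)-equivariant injections \(R_t\hookrightarrow R_{t+1}\). Hence \(\Omega_b(R)\ge\Omega_a(V)-1\), or \(\Omega_b(R)\ge\Omega_3(V)-m\) if \(a=2\). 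The two-step count \(\Omega_a(V)+\Omega_b(R)\) then exceeds \(D\) by the inequalities above.

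\textbf{Finally}, \((2,2,b^s)\) is not treated at all. The flag bound for the two quadrics is \(2\Omega_2-2\), which equals \(D\) for a single \(J_2\) block, so this estimate on the lowest stratum gives no strict inequality. The paper avoids counting there altogether: \(\Aut_L\) of a general intersection of two quadrics in \(\PP^n\), \(n\ge4\), is \((\ZZ/2)^n\), which has no element of odd order \(p\).
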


\subsubsection{Involutions in characteristic \texorpdfstring{$2$}{2}}

It remains to treat the case \(\operatorname{char} k=2\). Here an order-\(2\) linear
automorphism is unipotent and the induced nilpotent operator has square zero. This
special structure leads to different estimates, especially in the quadratic cases. We
prove the following theorem.

\begin{theorem}\label{thm:c2}
Let \(k\) be an algebraically closed field of characteristic \(2\). Let \(X\subset
\PP^n_k\) be a general smooth complete intersection of multidegree
\[
2\le d_1\le \cdots \le d_c,\qquad c\ge 2,\qquad n-c\ge 1.
\]
Assume \((d_1,\dots,d_c)\neq (2,2)\). Then \(X\) admits no nontrivial linear
automorphism of order \(2\).
\end{theorem}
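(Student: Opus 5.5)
We will prove \Cref{thm:c2} with the incidence estimate from the introduction. Fix a nontrivial involution \(g\in\PGL(V)\), \(V=k^{n+1}\), \(\operatorname{char}k=2\). Lift \(g\) to \(\tilde g\in\GL(V)\) with \(\tilde g^2\) scalar; after rescaling, \(\tilde g^2=1\), so \(\tilde g=1+N\) with \(N^2=0\), \(N\neq 0\). Up to conjugacy \(g\) is determined by \(r=\rk N\), with \(1\le r\le \lfloor (n+1)/2\rfloor\). The centralizer of \(N\) has dimension \((n+1)^2-2r(n+1-r)\), hence
\[
\dim \Cl(g)=2r(n+1-r).
\]
So it suffices to show that the locus \(T_g\subset T^\circ\) of complete intersections with \(g(X)=X\) has codimension \(>2r(n+1-r)\). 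Then \(\dim\mathfrak I_g<\dim T^\circ\), and combining over the finitely many \(r\) with \Cref{prop:finite-reduction} gives the theorem.

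\textbf{Step 1: reformulate \(g(X)=X\) in terms of the ideal.} The homogeneous ideal of \(X\) is generated by a flag of subspaces \(W_d\subset S^dV^*\), where \(W_d\) is spanned by the \(F_i\) of degree \(d\) together with multiples of lower-degree generators. The condition \(g(X)=X\) says that \(\tilde g\), acting on \(S^dV^*\) as \(1+N_d\) with \(N_d^2=0\) (characteristic \(2\)), preserves each graded piece of the ideal. I will stratify by the multidegree pattern.

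\textbf{Step 2: codimension of one graded piece.} Fix the top degree \(d=d_c\) and suppose \(e\) of the generators have degree \(d\). Preserving the ideal forces \(N_d\bigl(\langle F_j: d_j=d\rangle\bigr)\subset I_d\). Consider the incidence \(\{(\Lambda,\text{lower data})\}\), where \(\Lambda\) is an \(e\)-dimensional subspace of \(S^dV^*\) modulo \(I'_d\) (the part generated by lower degrees). The condition \(N_d\Lambda\subset \Lambda+I'_d\) defines a locus of codimension at least
\[
e\cdot\bigl(\rk \overline{N_d}-e\bigr)\quad\text{roughly, or more crudely}\quad \rk\overline{N_d}-e,
\]
where \(\overline{N_d}\) is the induced map on \(S^dV^*/I'_d\). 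This uses the standard estimate for \(N\)-stable subspaces of a Grassmannian with \(N^2=0\): such a \(\Lambda\) must satisfy \(\dim\Lambda\cap\Ker\) large, so the stable locus has codimension at least \(\dim\Lambda\cdot\rk N-(\text{correction})\). In practice I will take the cheap bound from the first generator: \(F_c\) must satisfy \(N_dF_c\in\Lambda+I'_d\), a subspace of dimension \(\le e+\dim I'_d\). This costs about \(\rk N_d - c\) conditions after accounting for the choice of the others.

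\textbf{Step 3: lower-bound \(\rk N_d\) on \(S^dV^*\).} Choose coordinates with \(N x_i=y_i\) for \(i\le r\), \(N y_i=0\), and \(N z_j=0\), working dually. Then \(\tilde g\) acts on monomials by \(x_i\mapsto x_i+y_i\), and \(N_d\) is the divided-difference map. The image of \(N_d\) contains the distinct leading terms \(y_1\cdot m\) for monomials \(m\) of degree \(d-1\) (with \(x_1\) to the first power times \(m\) not divisible by \(x_1\)). This gives
\[
\rk N_d\ \ge\ \dim S^{d-1}\bigl(k^{n}\bigr)\quad\text{for }r=1,
\]
and larger values for bigger \(r\). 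For \(d\ge3\) this is a polynomial of degree \(d-1\ge2\) in \(n\), which easily exceeds \(2r(n+1-r)+c\) when \(r\) is small. For large \(r\) I will use a bound of the form \(\rk N_d\ge r\binom{n+d-2}{d-1}/2\).

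\textbf{Main obstacle: the quadratic cases.} When \(d_c=2\), i.e. \(X\) is an intersection of \(c\ge3\) quadrics, the rank \(\rk N_2\) is only about \(r(n+1)-\binom{r}{2}\)-ish. Moreover, in characteristic \(2\) the quadratic forms \(x_iy_i\)-type have special behavior: \(N_2(x_i^2)=y_i^2\), while \(N_2(x_ix_j)=x_iy_j+x_jy_i+y_iy_j\). Here the single-generator bound does not beat \(2r(n+1-r)\). I will instead use that the whole \(c\)-dimensional space \(\Lambda\subset S^2V^*\) is \(N_2\)-stable. With \(N_2^2=0\), this forces \(\Lambda\) to contain the subspace \(N_2\Lambda\), of dimension \(s\), inside \(\Img N_2\). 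The resulting Grassmannian stratum has codimension roughly
\[
\sum_{s}\bigl[(c-s)(\rk N_2 - s)\bigr]-\dots,
\]
and I plan to show that for \(c\ge3\) this exceeds \(2r(n+1-r)\), with a separate treatment of \(s=0\), where \(\Lambda\subset\Ker N_2\) has huge codimension.

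The leftover small cases are where I expect failure of the crude counting and possibly need direct arguments, for instance via the singularity of fixed loci, or by showing that a stable pencil inside \(\Lambda\) forces \(X\) singular. These cases are \(c=3\) with small \(n\), and mixed \((2,\dots,2,3)\) with \(n=c+1\).
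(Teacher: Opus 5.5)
Your setup (normalizing $g=1+N$ with $N^2=0$, $\dim\Cl(g)=2r(n+1-r)$, reducing to a codimension count on the incidence) agrees with the paper. However, the proposal stops before the decisive estimates, and the parts made concrete do not work as stated.

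\textbf{The top-degree count.} Step~2 imposes invariance in the top degree modulo the lower ideal, so what matters is the rank of the induced map $\overline{N_d}$ on $S^dV^*/I'_d$. Step~3 only bounds $\rk N_d$ on all of $S^dV^*$. Nothing controls the rank lost in the quotient, and you never say how the lower-degree invariance conditions are added in. The paper avoids this by using \Cref{lem:fl,lem:new,prop:bt} to keep only the first one or two distinct degrees. The only quotients are then $S/(Q)$ and $S/(F)$ for a single smooth invariant form, where the loss is controlled:
\begin{itemize}
\item for a quadric, since $\Delta^2=0$ one has $\Img(\Delta|_{S_3})\cap QS_1\subset\Ker(\Delta|_{QS_1})\cong\Ker(\Delta|_{S_1})$, so at most $m-\kappa$ is lost in degree $3$ (\Cref{lem:c2qq});
\item for $\deg F\ge3$ at most $1$ is lost (\Cref{lem:c2hyp});
\item in both cases ranks increase with degree, by multiplying by an invariant linear form in the domain.
\end{itemize}

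\textbf{The single-generator bound.} Your ``cheap'' count $\rk N_d-c$ is too weak. For $(3,3)$ in $\PP^3$ with $r=1$ one has $\Omega_3=7$ and $\dim\Cl(g)=6$, so $7-2<6$. You need the full plane bound $s\Omega-s(s-1)$ of \eqref{eq:stgr}.

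\textbf{The large-$r$ bound.} Your bound $\rk N_d\ge r\binom{n+d-2}{d-1}/2$ is false. Since $N_d^2=0$, $\rk N_d\le\frac12\dim S^dV^*$. For $d=3$, $n=9$, $r=5$ your bound gives $112.5>110=\frac12\binom{12}{3}$. The paper instead uses the $\kappa$-independent bound $\Omega_3(V)\ge1+\binom m2$ and always combines two conditions (two lowest-degree equations, or the lowest plus the next degree). This gives at least $m(m-1)>m^2/2\ge2\kappa(m-\kappa)$.

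\textbf{The quadratic cases.} You rightly call these the main obstacle, but they are left as a plan with unresolved ``small'' exceptions. Your stratification by $\dim N_2\Lambda$ is the right idea; it is \Cref{lem:c2sqgr}. Carried out with $\Omega:=\Omega_2(V)=\kappa(m-\kappa)\le m^2/4$ and $E=\binom{m+1}2$, it gives codimension $s\Omega+a(E-2\Omega-2s)+2a^2>2\Omega=\dim\Cl(g)$ for all $s\ge3$, $m\ge s+2$. So three or more quadrics leave no exceptions. Your anticipated exceptions $(2,\dots,2,3)$ with $n=c+1$ split into three cases:
\begin{itemize}
\item three or more quadrics: covered by the stratification just described;
\item one quadric, $(2,b^s)$: one needs $\Omega_b(S/(Q))>\Omega_2(V)$, which follows from the quotient estimate above plus a check at $m=4$;
\item exactly two quadrics, $(2,2,b^s)$: this case is genuinely different.
\end{itemize}
For $(2,2,b^s)$, on the stratum where both quadrics are $g$-invariant the pencil condition has codimension exactly $2\Omega_2(V)=\dim\Cl(g)$, with no strict inequality. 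This is consistent with the genuine $(2,2)$ exception, so all the excess must come from the degree-$b$ part. The paper handles this fiberwise rather than by counting:
\begin{itemize}
\item for smooth $Y=V(Q_1,Q_2)$, the group $\AutL(Y)$ is finite;
\item each nontrivial involution acts non-scalarly on $R_b(Y)$, because $\Delta(uv^{b-1})=v^b\neq0$ in the domain $R(Y)$;
\item \Cref{lem:kill} then makes a general $s$-plane in $R_b(Y)$ stable under none of them.
\end{itemize}
Your proposal contains neither this argument nor a substitute, so as it stands it does not prove the theorem.
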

\section{Prime-to-characteristic automorphisms}\label{sec:ptp}

Throughout this section, \(k\) is an algebraically closed field of characteristic \(p\ge
0\). We consider the action of order prime to \(p\); in
characteristic \(0\), this means every positive integer.

This section is devoted to the proof of \Cref{thm:ptp}. The proof is organized according
to the degrees of the defining equations. Write the multidegree as \(2\le d_1\le \cdots
\le d_c\), and let \(d_1=\cdots=d_r=a\le d_c\) with \(r\) maximal.

The first case is the equal-degree initial block case. This means that the lowest-degree
equations form a block of type \((a^r)\). If this block is already non-exceptional,
namely if either \(a\ge 3\) and \(r\ge 2\), or \(a=2\) and \(r\ge 3\), then the
lowest-degree equations alone are sufficient for the automorphism estimate. This case is
treated in \Cref{sec:eq}.

The second case is the mixed-degree case. This is the case where the lowest-degree block
alone is not sufficient for the argument. This happens either when the lowest degree
occurs only once, or when the lowest block is the exceptional two-quadric block
\((2,2)\). In these cases, the next distinct degree has to be used. If the next degree
is \(b>a\) and occurs \(s\) times, then the new degree-\(b\) equations are studied
through their minimal equation space modulo the ideal generated by the lower-degree
equations. The mixed initial patterns needed later are therefore
\[
(2,2,b^s),\qquad (2,b^s),\qquad (a,b^s)\quad (3\le a<b).
\]
The reduction to the lowest-degree part and the definition of these minimal equation
spaces are recorded in \Cref{sec:fl}, and the mixed-degree estimates are carried out in
\Cref{sec:mix}.

The common dimension estimate for both cases is the Grassmannian incidence criterion
proved in \Cref{sec:inc}. Before those degree estimates, the following subsection gives
the finiteness reduction and the basic incidence setup. These ingredients are combined
at the end of the section in the proof of \Cref{thm:ptp}. Before entering the
degree-by-degree analysis, we first explain why it is enough to consider only finitely
many prime-order conjugacy classes; this is the content of \Cref{sec:fin}.

\subsection{Finiteness reduction and incidence setup}\label{sec:fin}

The first step is a finiteness reduction. Let $T^\circ$ be the smooth parameter space
of complete intersections of the fixed multidegree, and set
\[
\mathcal A
=
\{(X,h)\in T^\circ\times \mathrm{PGL}(V)\mid h(X)=X\}.
\]

\begin{proposition}\label{prop:finite-reduction}
The projection
\(
\mathcal{A}\longrightarrow T^\circ
\)
is finite. Consequently, there exists an integer \(M\) such that
\begin{equation}\label{eq:autbd}
|\operatorname{Aut}_L(X)|\le M
\end{equation}
for every \(X\in T^\circ\). In particular, to exclude nontrivial linear
automorphisms of order prime to \(p\), it is enough to exclude elements lying
in finitely many prime-order conjugacy classes in \(\PGL(V)\).
\end{proposition}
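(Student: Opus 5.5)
We reduce the statement to known finiteness results. We work with the stabilizer group scheme \(\mathcal A\to T^\circ\), which is the closed subgroup scheme of \(T^\circ\times\PGL(V)\) cut out by the condition \(h(X)=X\). It is closed because the condition \(h(X)=X\) is closed: it says that \(h\) preserves the fibre of the universal family, equivalently that the point of the Hilbert scheme is fixed. The plan has three steps.

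\emph{Step 1: quasi-finiteness.} We invoke Benoist's theorem: for a smooth complete intersection of codimension \(c<n\) and multidegree not \((2)\), the group \(\Aut_L(X)\) is finite. A single quadric does not occur here since \(c\ge 2\). The pencil \((2,2)\) also has finite linear automorphism group, which is the case Benoist treats as well, so no multidegree needs to be excluded. Equivalently, \(H^0(X,T_{\PP^n}|_X)\) has no invariant vector fields coming from \(\mathfrak{pgl}(V)\), so the stabilizer scheme is finite. Hence every fibre of \(\mathcal A\to T^\circ\) is finite, and the projection is quasi-finite.

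\emph{Step 2: properness.} This is the main obstacle, since a quasi-finite group scheme need not be finite. Here we use the finiteness result for relative automorphism groups recalled by Lyu--Zhang \cite[Proposition~2.15]{LyuZhang2024}. If one wants a direct argument instead, use the valuative criterion. Take a DVR \(R\) with fraction field \(K\), a family \(X_R\) over \(\operatorname{Spec} R\), and \(h_K\in\PGL(V)(K)\) preserving \(X_K\). Since \(\mathcal O(1)\) is canonical, being \(\omega_X\) twisted by a fixed integer, or being the unique generator of \(\Pic\) via Lefschetz when \(\dim X\ge 3\), \(h_K\) preserves it. The key point is that \(h_K\) extends to an automorphism of the smooth projective model \(X_R\). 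Two smooth projective models of the same non-ruled, or polarized with finite automorphisms, generic fibre that are related by a polarized isomorphism are isomorphic over \(R\), by Matsusaka--Mumford; this applies because \(X_0\) is smooth and not ruled or has finite \(\Aut_L\). The extended automorphism acts on \(H^0(\mathcal O(1))\otimes R\), which is the linear span giving \(V\), and so lands in \(\PGL(V)(R)\). Hence \(\mathcal A\to T^\circ\) is proper.

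\emph{Step 3: conclusion.} Quasi-finite and proper gives finite. The degree of a finite morphism is bounded on a noetherian base by the maximal number of generators of the pushforward \(\mathcal O\)-algebra over a finite affine cover. This yields \(M\) with \(|\Aut_L(X)|\le M\) for all \(X\in T^\circ\), which is \eqref{eq:autbd}. Any nontrivial element then has a power of prime order \(\ell\le M\). An element of order \(\ell\) prime to \(p\) is semisimple, and in \(\PGL(V)\) its lift to \(\GL(V)\) can be normalized so that its eigenvalues are \(\ell\)-th roots of unity (up to a scalar that is an \(\ell^2\)-th root). The multiplicities form a partition of \(n+1\). Hence only finitely many conjugacy classes occur, which proves the last assertion.
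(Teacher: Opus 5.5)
Your skeleton matches the paper's. Quasi-finite plus proper gives finite, noetherianity of $T^\circ$ bounds the fibre lengths, and then only primes $\ell\le M$ and finitely many eigenvalue-multiplicity patterns in $\mu_\ell$ remain. The paper gets finiteness of $\mathcal A\to T^\circ$ in one step from Benoist's separatedness theorem.

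The problem is your self-contained justification of properness in Step~2. Matsusaka--Mumford requires the special fibre $X_0$ to be non-ruled. There is no version in which ``polarized with finite automorphism group'' replaces that hypothesis.

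Many complete intersections within the scope of the proposition are ruled. Every smooth intersection of two quadrics in $\PP^n$ with $n\ge 4$ is rational, as one sees by projecting from a line on it. The paper relies on finiteness of $\mathcal A$ exactly over this two-quadric locus in the $(2,2,b^s)$ arguments. So your argument only covers the range where $X_0$ is not ruled, for example $\sum d_i\ge n+1$, where $K_X$ is trivial or ample.

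Nor can you fall back on $h_K$ having finite order. Take a smooth quadric over $R$ of the form $x_0x_1+q(x_2,\dots,x_n)$ and the involution $x_0\mapsto \pi x_1$, $x_1\mapsto \pi^{-1}x_0$, identity on the other variables. It preserves $X_K$, but no scalar multiple of it lies in $\GL(V)(R)$. Extending automorphisms over a DVR is a genuine separatedness statement, and it has to use the complete-intersection structure.

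A smaller point concerns Step~1. In positive characteristic, finiteness of the stabilizer is not equivalent to vanishing of vector fields, since a finite group scheme can be non-reduced. All you need there is finiteness of the fibres.

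The fix is to use Benoist in Step~2, not only in Step~1. His separatedness theorem amounts to properness of the action map $\PGL(V)\times T^\circ\to T^\circ\times T^\circ$, $(h,X)\mapsto (X,h(X))$. Now $\mathcal A$ is the base change of this map along the diagonal $T^\circ\to T^\circ\times T^\circ$, so it is proper over $T^\circ$. With that substitution, and the Matsusaka--Mumford argument dropped, your proof coincides with the paper's.
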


\begin{proof}
The finiteness of
\(
\mathcal{A}\longrightarrow T^\circ
\)
follows from Benoist's separatedness theorem
\cite[Lemma~2.2 and Théorème~1.7]{Benoist2013}.
Since \(T^\circ\) is noetherian, the lengths of the fibers of this finite
morphism are uniformly bounded. Hence there exists an integer \(M\) such that
\(
|\operatorname{Aut}_L(X)|\le M
\)
for every \(X\in T^\circ\).

Suppose that \(\operatorname{Aut}_L(X)\) contains a nontrivial element of
order \(m\) prime to \(p\). If \(\ell\) is a prime divisor of \(m\), then
\(\operatorname{Aut}_L(X)\) contains an element of order \(\ell\). Necessarily,
\(
\ell\le |\operatorname{Aut}_L(X)|\le M,
\)
so only finitely many primes \(\ell\ne p\) can occur.

For each fixed \(\ell\ne p\), an element of order \(\ell\) in \(\PGL(V)\)
may be lifted, after rescaling, to an element \(g\in\GL(V)\) satisfying
\(g^\ell=I\). Since \(\ell\ne p\), such an element is semisimple. Its
conjugacy class is determined, up to a common scalar, by the multiplicities
of its eigenvalues in \(\mu_\ell\). Hence there are only finitely many
conjugacy classes of elements of order \(\ell\) in \(\PGL(V)\).

Thus only finitely many prime-order conjugacy classes have to be excluded.
\end{proof}

For each of these conjugacy classes, the same incidence idea as in Lyu--Zhang
\cite[Proposition~2.15, pp.~16--19]{LyuZhang2024} is used. Namely, fix an element
\(g\in \mathrm{PGL}(V)\) of order \(\ell\), and let \(\mathrm{Cl}(g)\) be its conjugacy
class. Consider the incidence variety
\[
\mathfrak I_g
=
\{(h,X)\in \mathrm{Cl}(g)\times T^\circ\mid h(X)=X\}.
\]
Let \(Z_g:=\operatorname{pr}_2(\mathfrak I_g)\subset T^\circ\) be the locus of smooth
complete intersections stabilized by some element in \(\mathrm{Cl}(g)\).

\begin{proposition}\label{prop:fixed-incidence}
With notation as above, suppose that
\(
\dim Z_g<\dim T^\circ,
\)
where the dimension of the constructible set \(Z_g\) is understood as the dimension of
its Zariski closure. Then \(Z_g\) is contained in a proper closed subset of \(T^\circ\).
In particular, the same conclusion holds if
\(
\dim \mathfrak I_g<\dim T^\circ,
\)
since \(\dim Z_g\le \dim \mathfrak I_g\). Hence a general member of \(T^\circ\) is not
stabilized by any element in \(\mathrm{Cl}(g)\). If the corresponding estimate holds for
all conjugacy classes in a finite collection, then the union of the corresponding
stabilized loci is still contained in a proper closed subset of \(T^\circ\).
\end{proposition}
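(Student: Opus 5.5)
The statement is essentially formal, so I will prove it by combining Chevalley's theorem with elementary dimension theory. The key steps are as follows.

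First I check that \(\mathfrak I_g\) is a locally closed (hence constructible) subset of \(\Cl(g)\times T^\circ\). The conjugacy class \(\Cl(g)\) is the orbit of \(g\) under conjugation, so it is locally closed in \(\PGL(V)\). The condition \(h(X)=X\) is closed: it is the preimage of the graph of the identity under the map \((h,X)\mapsto (h(X),X)\) into \(T^\circ\times T^\circ\), and \(T^\circ\) is separated. Equivalently, \(\mathfrak I_g=\mathcal A\cap (T^\circ\times\Cl(g))\) after swapping factors. By Chevalley's theorem, \(Z_g=\operatorname{pr}_2(\mathfrak I_g)\) is then constructible in \(T^\circ\).

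Next, let \(\overline{Z_g}\) be its Zariski closure in \(T^\circ\). Since \(T^\circ\) is an open subset of a projective space of tuples of forms, or of a product of such, it is irreducible. A closed subset of an irreducible variety with strictly smaller dimension is a proper closed subset. Hence \(\dim Z_g<\dim T^\circ\) gives that \(\overline{Z_g}\subsetneq T^\circ\) is proper closed, and every point outside it is a smooth complete intersection not stabilized by any element of \(\Cl(g)\).

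For the inequality \(\dim Z_g\le\dim\mathfrak I_g\), I apply the standard fact that a dominant morphism of varieties does not increase dimension. I decompose \(\mathfrak I_g\) into finitely many irreducible components \(W_i\). The closure of \(\operatorname{pr}_2(W_i)\) is irreducible of dimension at most \(\dim W_i\), via the fiber dimension theorem or the transcendence degree of function fields. The closure of \(Z_g\) is the union of these finitely many closures, so its dimension is at most \(\max_i\dim W_i=\dim\mathfrak I_g\).

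Finally, for a finite collection of classes, the finite union of proper closed subsets of the irreducible space \(T^\circ\) is still proper closed, again by irreducibility. Its complement is a dense open set of general members stabilized by no element of any of the classes. The only point needing care is the constructibility and locally closed structure of \(\mathfrak I_g\), together with the irreducibility of \(T^\circ\). I do not expect a genuine obstacle; the only step worth stating carefully is the irreducibility of \(T^\circ\). Here I would note that \(T^\circ\) is a nonempty open subset of an irreducible parameter space, namely a product of projective spaces of forms, or the appropriate open subset of a Hilbert scheme component.
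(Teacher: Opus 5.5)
Your proposal is correct and follows the same route as the paper: Chevalley's theorem for constructibility of \(Z_g\), properness of \(\overline{Z_g}\) from the dimension bound, \(\dim Z_g\le\dim\mathfrak I_g\), and the fact that a finite union of proper closed subsets of \(T^\circ\) is still proper. The paper leaves several points implicit, and you justify them explicitly: local closedness of \(\Cl(g)\) and of the incidence condition, the componentwise bound on image dimensions, and irreducibility of \(T^\circ\), which makes ``general'' meaningful and the finite-union step valid.
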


\begin{proof}
By Chevalley's theorem, \(Z_g\) is constructible. By assumption, the dimension of its
Zariski closure is strictly smaller than \(\dim T^\circ\). Hence the Zariski closure
\(\overline{Z_g}\) is a proper closed subset of \(T^\circ\), and therefore \(Z_g\) is
contained in a proper closed subset of \(T^\circ\). Thus a general member of
\(T^\circ\) is not stabilized by any element in \(\Cl(g)\).

If instead one has the stronger estimate
\(
\dim \mathfrak I_g<\dim T^\circ,
\)
then the same conclusion follows from \(\dim Z_g\le \dim \mathfrak I_g\).

Finally, if the estimate holds for all conjugacy classes in a finite collection, then
the union of the corresponding stabilized loci is contained in the finite union of
proper closed subsets of \(T^\circ\), which is again a proper closed subset of
\(T^\circ\).
\end{proof}

In the arguments below, \(T^\circ\) will usually be an open subset of a Grassmannian.
For instance, in the equal-degree case of type \((d^r)\), if
\(S_d=\operatorname{Sym}^d(V^\vee)\), the defining equations span an \(r\)-dimensional
subspace of \(S_d\), and \(T^\circ\) is the smooth open subset of
\(\operatorname{Gr}(r,S_d)\). Therefore, in that case,
\begin{equation}\label{eq:grdim}
\dim T^\circ=\dim \operatorname{Gr}(r,S_d)=r(\dim S_d-r).
\end{equation}
This is the dimension against which the dimension of the corresponding stabilized locus
will be compared below.

\begin{remark}\label{rem:pgl}
Let \(g\in\PGL(V)\) have prime order \(\ell\neq p\). After choosing and rescaling a lift, we may rewrite \(g\) as an element of \(\GL(V)\) satisfying $g^\ell=I$.

Indeed, any lift satisfies \(g^\ell=\lambda I\) for some \(\lambda\in k^\times\), and such a normalization is possible because \(k\) is algebraically closed. The normalized lift is semisimple since \(\ell\) is invertible in \(k\).

Replacing a lift by a scalar multiple only multiplies its action on \(\Sym^d(V^\vee)\) by a scalar, and hence does not change its invariant subspaces, nor those in any equivariant subquotient of fixed degree. Moreover, the natural map
\(
\Cl_{\GL(V)}(g)\longrightarrow\Cl_{\PGL(V)}(g)
\)
is surjective with finite fibers. Indeed, two elements in the same fiber differ by a scalar \(c\), and if both are conjugate to the normalized lift \(g\), then \(c^\ell=1\). Consequently,
\(
\dim\Cl_{\GL(V)}(g)=\dim\Cl_{\PGL(V)}(g).
\)
Thus we henceforth use \(g\) for a normalized lift and compute conjugacy-class dimensions in \(\GL(V)\).
\end{remark}

\subsection{Reduction to the lowest-degree part and minimal equation spaces}\label{sec:fl}
This subsection explains that, for the purpose of estimating linear automorphisms, it
is not always necessary to use all defining equations simultaneously. The equations of
minimal degrees are automatically preserved by every linear automorphism.

We first record the basic reduction to the lowest-degree equations. The reductions in
this subsection are geometric and do not use semisimplicity, and they are also used later in the order-\(p\)
arguments in the next section.

\begin{lemma}\label{lem:fl}
Let $X\subset \PP^n_k$ be a complete intersection of multidegree $(d_1,\dots,d_c)$, and
suppose \(d_1=\cdots=d_r<d_{r+1}\le \cdots \le d_c\) for some maximal $r$. Choose
defining equations $F_1,\dots,F_c$ with $\deg F_i=d_i$, and set
\(Y:=V(F_1,\dots,F_r)\subset \PP^n\). Then every linear automorphism of $X$ preserves
the ideal $(F_1,\dots,F_r)$, hence \(\AutL(X)\subset \AutL(Y)\).
\end{lemma}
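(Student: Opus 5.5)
The plan is to work with the homogeneous ideal of \(X\) rather than with the particular equations. Write \(I=I(X)\subset S=\bigoplus_d \Sym^d(V^\vee)\) for the saturated homogeneous ideal of \(X\). Since \(X\) is a complete intersection of positive dimension, the equations \(F_1,\dots,F_c\) form a regular sequence. Hence the ideal \((F_1,\dots,F_c)\) is saturated; it is unmixed by Macaulay's unmixedness theorem and has no embedded component at the irrelevant ideal because \(\dim X\ge 1\). Therefore \(I=(F_1,\dots,F_c)\). Any \(h\in\AutL(X)\), lifted to \(\GL(V)\), satisfies \(h(X)=X\), so it preserves \(I\) (an ideal determined by \(X\)), and it acts on each graded piece \(I_d\) because the action on \(S\) preserves degrees.

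The key step is to identify the low-degree graded pieces of \(I\). For \(d<d_{r+1}\) (with \(d_{r+1}=\infty\) when \(r=c\)), every element of \(I_d=\sum_i S_{d-d_i}F_i\) involves only generators with \(d_i\le d\). Since \(d_i<d_{r+1}\), these are exactly \(F_1,\dots,F_r\). In particular:
\begin{itemize}
\item \(I_e=0\) for \(e<a:=d_1\);
\item \(I_a=\Span(F_1,\dots,F_r)\);
\item for \(a\le d<d_{r+1}\) we have \(I_d=(F_1,\dots,F_r)_d\).
\end{itemize}
So the subspace \(I_a\subset \Sym^a(V^\vee)\) is intrinsic to \(X\): it is the lowest nonzero graded piece of \(I(X)\). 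Hence \(h(I_a)=I_a\).

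It follows that \(h\) preserves the ideal generated by \(I_a\), which is \((F_1,\dots,F_r)\), and therefore also preserves its zero locus \(Y=V(F_1,\dots,F_r)\). Thus \(h\) induces a linear automorphism of \(Y\). The map \(\AutL(X)\to\AutL(Y)\) is the identity on \(\PGL(V)\), so it is injective and gives the inclusion \(\AutL(X)\subset\AutL(Y)\). Note that \(Y\) is again a complete intersection, since \(F_1,\dots,F_r\) is a subsequence of a regular sequence.

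The main point requiring care is the saturation claim \(I(X)=(F_1,\dots,F_c)\). Without it, \(h\) would only be known to preserve the saturation, and the lowest-degree piece of the saturation could a priori be larger than \(\Span(F_1,\dots,F_r)\). I would justify the claim via the Cohen–Macaulay property: \(S/(F_1,\dots,F_c)\) is Cohen–Macaulay of dimension \(n+1-c\ge 2\), so the irrelevant ideal is not an associated prime. Equivalently, \(H^0_{\mathfrak m}\) vanishes by depth. Alternatively, one can avoid saturation by working scheme-theoretically with \(H^0(\PP^n,\mathcal I_X(d))\), using the Koszul resolution and the vanishing \(H^1(\mathcal O_{\PP^n}(m))=0\) to compute these spaces. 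The rest of the argument is formal.
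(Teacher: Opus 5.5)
Your proof is correct and follows the paper's argument: the lowest nonzero graded piece of $I_X$ is $\Span(F_1,\dots,F_r)$, so every linear automorphism preserves it, preserves the ideal $(F_1,\dots,F_r)$ it generates, and hence preserves $Y$. The one addition is that you explicitly prove $I_X=(F_1,\dots,F_c)$ by the Cohen--Macaulay/depth argument, a saturation step the paper uses without comment.
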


\begin{proof}
The forms $F_1,\dots,F_r$ span the smallest nonzero graded piece of $I_X$. A linear
automorphism of $X$ preserves $I_X$ degree by degree, hence preserves this smallest
nonzero graded piece and therefore the ideal it generates. Thus it induces a linear
automorphism of \(Y=V(F_1,\dots,F_r)\).
\end{proof}

In some cases the lowest-degree equations alone are not enough for the dimension
estimate, and one has to use the next minimal equation space as well. We call these the
mixed-degree cases. Here are the technical inputs for them. Let \(e_1<e_2<\cdots<e_m\)
be the distinct degrees occurring among $d_1,\dots,d_c$. For each $j$ define
\[
J_{<e_j}:=\text{the ideal generated by }\bigoplus_{t<e_j}(I_X)_t.
\]
We then define the minimal equation space in degree $e_j$ by
\begin{equation}\label{eq:minsp}
N_{e_j}(X):=(I_X)_{e_j}/(J_{<e_j})_{e_j}.
\end{equation}

\begin{lemma}\label{lem:new}
Every linear automorphism of \(X\) induces a linear automorphism of each minimal
equation space \(N_{e_j}(X)\).
\end{lemma}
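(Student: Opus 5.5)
The plan is to mimic the proof of \Cref{lem:fl}: show that a linear automorphism preserves \(I_X\) degree by degree, and that it also preserves each subspace \((J_{<e_j})_{e_j}\), so that it descends to the quotient \(N_{e_j}(X)\).

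Let \(h\in\AutL(X)\) be represented by \(A\in\GL(V)\). For each degree \(t\), \(A\) acts on \(S_t=\Sym^t(V^\vee)\) by substitution \(F\mapsto F\circ A^{-1}\). This action is multiplicative: \((FG)\circ A^{-1}=(F\circ A^{-1})(G\circ A^{-1})\), and it respects the grading. The argument then runs in three steps.

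\textbf{Step 1.} Since \(h(X)=X\) and \(X\) is a complete intersection, the homogeneous ideal \(I_X\) is saturated, and it equals the ideal of all forms vanishing on \(X\). Hence \(A\cdot (I_X)_t=(I_X)_t\) for every \(t\), exactly as used in \Cref{lem:fl}.

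\textbf{Step 2.} The ideal \(J_{<e_j}\) is generated by the subspace \(W=\bigoplus_{t<e_j}(I_X)_t\), and \(A\) maps \(W\) onto itself by Step 1. Since the action is a graded ring automorphism of \(\bigoplus_t S_t\), it sends the ideal generated by \(W\) to the ideal generated by \(A\cdot W=W\). Therefore \(A\cdot J_{<e_j}=J_{<e_j}\), and in particular \(A\cdot (J_{<e_j})_{e_j}=(J_{<e_j})_{e_j}\). We also note that \((J_{<e_j})_{e_j}\subset (I_X)_{e_j}\), because \(I_X\) is an ideal containing \(W\).

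\textbf{Step 3.} By Steps 1 and 2, \(A\) preserves the pair \((J_{<e_j})_{e_j}\subset (I_X)_{e_j}\). It therefore induces a linear automorphism of the quotient \(N_{e_j}(X)\), whose inverse is induced by \(A^{-1}\). Replacing \(A\) by a scalar multiple \(\lambda A\) changes this induced map only by the scalar \(\lambda^{-e_j}\). So the induced automorphism is well defined up to scalars, and its invariant subspaces depend only on \(h\). This is all that is needed later, in line with \Cref{rem:pgl}.

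There is no serious obstacle. The only point requiring care is the claim in Step 1 that \(h\) preserves \((I_X)_t\) in every degree \(t\), not only in the smallest one. This follows from the saturation of \(I_X\), which holds because \(X\) is an arithmetically Cohen–Macaulay complete intersection of positive dimension. Consequently \(I_X\) is determined by the subvariety \(X\), and hence it is preserved by any linear transformation stabilizing \(X\).
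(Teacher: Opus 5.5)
Your proposal is correct and follows the same route as the paper's proof: \(I_X\) is preserved degree by degree, so the ideal \(J_{<e_j}\) generated by the lower-degree pieces is preserved, and the automorphism descends to the quotient \(N_{e_j}(X)\). Your additional remarks, namely the saturation of \(I_X\) and the fact that rescaling the lift changes the induced map only by the scalar \(\lambda^{-e_j}\), make explicit two points that the paper leaves implicit.
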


\begin{proof}
A linear automorphism of \(X\) preserves \(I_X\) degree by degree. Since it preserves
all lower-degree pieces, it preserves the ideal generated by them, and hence preserves
\((J_{<e_j})_{e_j}\). Therefore it induces a linear automorphism of the quotient
\(
N_{e_j}(X)=(I_X)_{e_j}/(J_{<e_j})_{e_j}.
\)
\end{proof}

\begin{proposition}\label{prop:bt}
Let \(Y\subset \PP^n_k\) be a fixed smooth complete intersection with homogeneous ideal
\(I_Y\subset S=\Sym(V^\vee)\). Let \(e\ge 1\), and assume that \(I_Y\) is generated by
forms of degrees strictly smaller than \(e\). Set
\(
R_e(Y):=H^0(Y,\mathcal O_Y(e))\simeq S_e/(I_Y)_e.
\)
Let \(r\ge 1\) and assume that \(\dim Y\ge r\). Then there exists a nonempty Zariski open
subset
\(
V_{Y,e,r}\subset \Gr(r,R_e(Y))
\)
such that, for every \(U\in V_{Y,e,r}\), the common zero locus of the sections in \(U\)
is a smooth complete intersection of codimension \(r\) in \(Y\).

Moreover, if \(\widetilde U\subset S_e\) is any \(r\)-dimensional subspace mapping
isomorphically onto \(U\) under the quotient map
\(
S_e\twoheadrightarrow R_e(Y)=S_e/(I_Y)_e,
\)
then
\(
X:=Y\cap V(\widetilde U)\subset \PP^n_k
\)
is a smooth complete intersection obtained from \(Y\) by adding \(r\) equations of
degree \(e\), and its degree-\(e\) minimal equation space is naturally identified
with \(U\).
\end{proposition}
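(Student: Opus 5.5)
The plan is a Bertini-type argument for the first part, followed by a short ideal-theoretic computation for the second.

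\emph{Step 1: a good open set.} Since \(I_Y\) is generated in degrees \(<e\), we have \(e\ge 2\) and \(e\) exceeds every degree \(d_i\) of \(Y\). Because \(\mathcal O_Y(e)\) is very ample, the restriction map \(S_e\to H^0(Y,\mathcal O_Y(e))\) is surjective. This holds because a complete intersection is projectively normal, so \(R_e(Y)=S_e/(I_Y)_e\) indeed. I will prove the first assertion by induction on \(r\), choosing sections one at a time. The locus of \(U\in\Gr(r,R_e(Y))\) for which \(Y\cap V(U)\) has codimension \(r\) and is smooth is constructible, and in fact open. This follows by upper semicontinuity of fiber dimension and openness of smoothness, applied to the universal family over the Grassmannian, which is proper over \(\Gr\). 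It therefore suffices to exhibit one such \(U\).

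\emph{Step 2: existence, the main obstacle.} For existence I use Bertini's theorem for the very ample linear system \(|\mathcal O_Y(e)|\) on the smooth projective variety \(Y\). Over an algebraically closed field of arbitrary characteristic, a general hyperplane section of a smooth projective variety with respect to a very ample complete linear system is smooth (Jouanolou, or Hartshorne II.8.18 for the embedding). It is also irreducible when the dimension is at least \(2\). Applying this iteratively to \(Y_0=Y\), \(Y_i=Y_{i-1}\cap V(s_i)\), each \(Y_i\) is a smooth complete intersection of dimension \(\dim Y-i\ge 0\). The hypothesis \(\dim Y\ge r\) ensures each step is meaningful, including the final step producing a finite reduced set when \(\dim Y=r\). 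Each section \(s_i\) lifts to \(S_e\), and \(Y_i\) is cut out scheme-theoretically by \(I_Y+(\tilde s_1,\dots,\tilde s_i)\). Since the dimension drops by one at each step, the sequence of generators is regular in the Cohen--Macaulay ring \(S/I_Y\). The main care is needed in positive characteristic, where one must use the embedding by the complete system \(|\mathcal O_Y(e)|\) (a hyperplane section in the Veronese-type embedding) rather than generic smoothness of a morphism. This works in all characteristics, so I expect no real obstruction, but this is the point I would verify most carefully.

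\emph{Step 3: the lift and the minimal equation space.} Let \(\widetilde U\) be a lift of \(U\), with basis \(G_1,\dots,G_r\), and let \(F_1,\dots,F_{c'}\) be generators of \(I_Y\) forming a regular sequence. Then \(F_1,\dots,F_{c'},G_1,\dots,G_r\) is a regular sequence, since its zero locus has the expected codimension and \(S\) is Cohen--Macaulay. So \(X\) is a complete intersection with \(I_X=I_Y+(\widetilde U)\), which is saturated because complete intersection ideals are unmixed. Smoothness of \(X\) is the smoothness from Step 1.

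It remains to compute \(N_e(X)\). All other generators have degree \(<e\), so \((I_X)_t=(I_Y)_t\) for \(t<e\). Hence \(J_{<e}\) is generated by \(\bigoplus_{t<e}(I_Y)_t\), which generates \(I_Y\), and so \((J_{<e})_e=(I_Y)_e\). Moreover \((I_X)_e=(I_Y)_e+\widetilde U\), so \(N_e(X)=((I_Y)_e+\widetilde U)/(I_Y)_e\), which is the image of \(\widetilde U\) in \(R_e(Y)\), namely \(U\). This identification is natural, i.e.\ independent of the lift, and it also shows that \(X\) depends only on \(U\) modulo \(I_Y\).
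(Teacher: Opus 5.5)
Your proof is correct and follows essentially the same route as the paper. Both arguments use openness of the smooth codimension-\(r\) locus in the universal family over \(\Gr(r,R_e(Y))\), nonemptiness by iterated Bertini for degree-\(e\) hypersurface sections, extension of the regular sequence defining \(Y\) by a basis of \(\widetilde U\), and the computation \(N_e(X)=\bigl((I_Y)_e+\widetilde U\bigr)/(I_Y)_e\simeq U\). Your extra justifications fill in points the paper leaves implicit: semicontinuity plus properness for openness, and unmixedness for saturation of \(I_Y+(\widetilde U)\). One small correction: surjectivity of \(S_e\to H^0(Y,\mathcal O_Y(e))\) comes from projective normality of complete intersections, not from very ampleness, as your next sentence already says.
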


\begin{proof}
Let
\(
G:=\Gr(r,R_e(Y)).
\)
The universal rank-\(r\) subbundle on \(G\) determines, on \(G\times Y\), a universal
family of \(r\) sections of \(\mathcal O_Y(e)\). The locus of points \(U\in G\) for
which these sections cut out a smooth subscheme of codimension \(r\) in \(Y\) is
Zariski open.

We show that this open subset is nonempty. Since \(\mathcal O_Y(e)\) is very ample,
Bertini's theorem gives a section
\(
s_1\in H^0(Y,\mathcal O_Y(e))
\)
whose zero locus \(Y_1:=V_Y(s_1)\) is smooth of codimension \(1\) in \(Y\). The
restriction of \(\mathcal O_Y(e)\) to \(Y_1\) is again very ample. Applying Bertini
successively, we may choose linearly independent sections
\(
s_1,\dots,s_r\in H^0(Y,\mathcal O_Y(e))
\)
such that
\(
Y_j:=V_Y(s_1,\dots,s_j)
\)
is smooth of codimension \(j\) in \(Y\) for every \(1\le j\le r\). Hence
\(
U:=\langle s_1,\dots,s_r\rangle
\)
belongs to the required open subset, which we denote by \(V_{Y,e,r}\).

Now let \(U\in V_{Y,e,r}\), and let \(\widetilde U\subset S_e\) be any lift of \(U\).
The restrictions to \(Y\) of the forms in \(\widetilde U\) are precisely the sections
in \(U\). Therefore
\(
X=Y\cap V(\widetilde U)
\)
is smooth of codimension \(r\) in \(Y\). Since \(Y\) is a complete intersection in
\(\PP^n_k\), adjoining a basis of \(\widetilde U\) to a regular sequence defining
\(Y\) gives a regular sequence defining \(X\). Thus \(X\) is a smooth complete
intersection in \(\PP^n_k\).

Finally,
\(
I_X=I_Y+(\widetilde U).
\)
Since \(I_Y\) is generated in degrees strictly smaller than \(e\), while the new
generators have degree \(e\), the ideal generated by the components of \(I_X\) of
degree less than \(e\) is precisely \(I_Y\). Hence
\(
J_{<e}=I_Y
\)
and therefore
\[
N_e(X)
=
(I_X)_e/(J_{<e})_e
=
\bigl((I_Y)_e+\widetilde U\bigr)/(I_Y)_e
\simeq U.
\]
This identification is independent of the choice of \(\widetilde U\).
\end{proof}
\begin{remark}\label{rem:mixed-reduction}
In the mixed-degree cases
\[
(2,b^s),\qquad (a,b^s)\ (3\le a<b),\qquad (2,2,b^s),
\]
we may restrict to the nonempty open locus where the lower-degree part \(Y\)
is smooth. Since our statements concern a general complete intersection, this
restriction is harmless.

By \cref{lem:fl,lem:new}, every linear automorphism preserves \(Y\) and acts on
the minimal equation space in the next degree. After fixing \(Y\),
\cref{prop:bt} shows that the minimal equation spaces giving smooth complete
intersections contain a nonempty open subset of the relevant Grassmannian.
Thus, in the later incidence arguments, it is enough to consider the equations
in the first one or two distinct degrees.
\end{remark}
The following lemma will be used for the mixed case beginning with \((2,2)\):
it shows that a general higher-degree minimal equation space is stabilized by
none of the finitely many non-scalar automorphisms of the fixed lower-degree
part.

\begin{lemma}\label{lem:kill}
Let \(M\) be a finite-dimensional \(k\)-vector space, and let \(1\le s<\dim M\). Let
\(T\subset \operatorname{End}_k(M)\setminus k\cdot\id_M\) be a finite subset. Then a
general point \(U\in\Gr(s,M)\) satisfies \(h(U)\nsubseteq U\) for every \(h\in T\). In
particular, if \(T\subset\GL(M)\), then a general \(s\)-plane is stabilized by no element
of \(T\).
\end{lemma}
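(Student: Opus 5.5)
The plan is to show, for each fixed \(h\in T\), that the locus
\[
B_h:=\{U\in\Gr(s,M)\mid h(U)\subseteq U\}
\]
is a proper Zariski closed subset of \(\Gr(s,M)\). Since \(T\) is finite and \(\Gr(s,M)\) is irreducible, the union \(\bigcup_{h\in T}B_h\) is then a proper closed subset, and its complement is a nonempty open set of \(s\)-planes \(U\) with \(h(U)\nsubseteq U\) for all \(h\in T\). The final assertion follows at once: if \(h\in\GL(M)\) stabilizes \(U\), then in particular \(h(U)\subseteq U\).

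Closedness is standard. On \(\Gr(s,M)\) let \(\mathcal U\subset M\otimes\mathcal O\) be the tautological subbundle and \(\mathcal Q\) the quotient bundle. The composite \(\mathcal U\xrightarrow{h}M\otimes\mathcal O\to\mathcal Q\) is a morphism of vector bundles, and \(B_h\) is exactly its vanishing locus, hence closed. Equivalently, in Plücker terms the condition is that \(u_1\wedge\cdots\wedge u_s\wedge h(u_i)=0\) for all \(i\).

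The key step is properness: for each non-scalar \(h\) one must exhibit a single \(s\)-plane \(U\) with \(h(U)\nsubseteq U\). Since \(h\) is not a scalar, there is a vector \(v\) such that \(v\) and \(h(v)\) are linearly independent; otherwise every vector would be an eigenvector, forcing \(h\in k\cdot\id_M\). Put \(w=h(v)\). Because \(s<\dim M\), one can extend \(v\) to a linearly independent set \(v,u_2,\dots,u_s\) whose span \(U\) does not contain \(w\). To do this, choose a complement \(W\) of \(\langle v,w\rangle\), so \(\dim W=\dim M-2\ge s-2\). Take \(u_2,\dots,u_s\in W\) linearly independent. Then \(U=\langle v\rangle\oplus\langle u_2,\dots,u_s\rangle\) meets \(\langle v,w\rangle\) only in \(\langle v\rangle\), so \(w\notin U\). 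Hence \(h(U)\nsubseteq U\), and \(U\notin B_h\).

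There is no serious obstacle here. The only point requiring care is the dimension count \(\dim W=\dim M-2\ge s-2\), which uses \(s\le\dim M-1\). This is exactly where the hypothesis \(s<\dim M\) enters; for \(s=\dim M\) the statement fails trivially, since every endomorphism preserves \(M\). The case \(s=1\) is covered directly by taking \(U=\langle v\rangle\).
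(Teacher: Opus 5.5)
Your proof is correct and follows the same approach as the paper. Both show that each $Z_h=\{U\in\Gr(s,M): h(U)\subset U\}$ is closed and proper, by taking $v$ with $h(v)\notin kv$ and an $s$-plane through $v$ that avoids $h(v)$, and then conclude from the irreducibility of $\Gr(s,M)$ and the finiteness of $T$. There is one small slip in your dimension count: you need $s-1$ independent vectors $u_2,\dots,u_s$ in $W$, so the required inequality is $\dim W=\dim M-2\ge s-1$ rather than $s-2$, and this is exactly what $s<\dim M$ gives.
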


\begin{proof}
For \(h\in T\), let
\(
Z_h:=\{U\in\Gr(s,M)\mid h(U)\subset U\}.
\)
This is closed in \(\Gr(s,M)\). Since \(h\) is not scalar, choose \(v\in M\) with
\(h(v)\notin kv\). As \(s<\dim M\), there exists an \(s\)-plane \(U\subset M\) containing
\(v\) but not \(h(v)\). Then \(h(U)\nsubseteq U\), so \(Z_h\) is proper. Hence
\(\bigcup_{h\in T}Z_h\) is a proper closed subset, and its complement is the required
general locus. If \(h\) is invertible, the condition \(h(U)\subset U\) is equivalent to
\(h(U)=U\).
\end{proof}

\subsection{A Grassmannian incidence criterion}\label{sec:inc}

Now we start our main calculation. Let $G$ be an algebraic group acting linearly on a
finite-dimensional $k$-vector space $M$, and fix an integer $r\ge 1$. Let $g\in G$ be a
semisimple element. Since $g$ acts semisimply on $M$, we have a decomposition
\(M=\bigoplus_\lambda M_\lambda\), where \(\lambda\) runs over the eigenvalues of \(g\)
on \(M\), and \(M_\lambda:=\{v\in M: g\cdot v=\lambda v\}\) is the eigensubspace. Set
\[
\omega(M):=\max_\lambda \dim M_\lambda,
\qquad
\gamma(M):=\dim M-\omega(M).
\]
We recall the following incidence notation. Let
\[
\mathfrak I_g(r,M)
:=
\{(h,U)\in \Cl_G(g)\times \Gr(r,M)\mid h(U)=U\},
\]
where \(\Cl_G(g)\) denotes the conjugacy class of \(g\) in \(G\). Let
\[
Z_g(r,M):=\operatorname{pr}_2\bigl(\mathfrak I_g(r,M)\bigr)\subset \Gr(r,M)
\]
be the locus of \(r\)-planes stabilized by some element in \(\Cl_G(g)\). The condition \(h(U)=U\) is closed in
\(\Cl_G(g)\times \Gr(r,M)\). As in
\Cref{prop:fixed-incidence}, \(Z_g(r,M)\) is constructible, and after interpreting its
dimension as that of its Zariski closure, we have
\(
\dim Z_g(r,M)\le \dim \mathfrak I_g(r,M).
\)
Thus it suffices to bound the dimension of \(\mathfrak I_g(r,M)\).

\begin{proposition}\label{prop:inc}
With notation as above, we have
\begin{equation}\label{eq:incbd}
\dim Z_g(r,M)
\le
\dim \Cl_G(g)+r\omega(M)-r.
\end{equation}
In particular, a sufficient condition for
\[
\dim Z_g(r,M)
<
\dim \Gr(r,M)
=
r(\dim M-r)
\]
is
\begin{equation}\label{eq:inccrit}
r\gamma(M)
>
\dim \Cl_G(g)+r(r-1).
\end{equation}
\end{proposition}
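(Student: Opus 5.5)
The plan is to bound \(\dim \mathfrak I_g(r,M)\) by fibering over the first projection \(\operatorname{pr}_1\colon \mathfrak I_g(r,M)\to \Cl_G(g)\). Every \(h\in\Cl_G(g)\) is conjugate to \(g\): writing \(h=xgx^{-1}\), the fiber over \(h\) is \(\{U\in\Gr(r,M)\mid h(U)=U\}\), and translation by \(x\) identifies it with the fiber over \(g\). Hence all fibers are isomorphic, and
\[
\dim \mathfrak I_g(r,M)\le \dim\Cl_G(g)+\dim F_g,\qquad F_g:=\{U\in\Gr(r,M)\mid g(U)=U\}.
\]
Combining this with \(\dim Z_g(r,M)\le \dim\mathfrak I_g(r,M)\), the estimate \eqref{eq:incbd} reduces to showing \(\dim F_g\le r\omega(M)-r\).

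To bound \(\dim F_g\), use that \(g\) acts semisimply on \(M\). A \(g\)-stable subspace \(U\) then decomposes as \(U=\bigoplus_\lambda (U\cap M_\lambda)\); this follows from writing the projectors onto eigenspaces as polynomials in \(g\). Consequently \(F_g\) is the disjoint union, over tuples \((r_\lambda)\) with \(\sum_\lambda r_\lambda=r\) and \(0\le r_\lambda\le m_\lambda:=\dim M_\lambda\), of the products \(\prod_\lambda\Gr(r_\lambda,M_\lambda)\). There are finitely many such tuples, so
\[
\dim F_g=\max_{(r_\lambda)}\sum_\lambda r_\lambda(m_\lambda-r_\lambda).
\]
For each \(\lambda\) with \(r_\lambda\ge1\), we have \(r_\lambda(m_\lambda-r_\lambda)\le r_\lambda(\omega(M)-1)\), since \(m_\lambda\le\omega(M)\) and \(r_\lambda\ge1\). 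Terms with \(r_\lambda=0\) contribute nothing. Summing over \(\lambda\) gives \(\dim F_g\le r(\omega(M)-1)\), which proves \eqref{eq:incbd}.

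For the criterion, the right-hand side of \eqref{eq:incbd} is strictly smaller than \(r(\dim M-r)\) exactly when
\[
\dim\Cl_G(g)+r\omega(M)-r<r\dim M-r^2,
\]
that is, when \(\dim\Cl_G(g)+r(r-1)<r(\dim M-\omega(M))=r\gamma(M)\). This is \eqref{eq:inccrit}.

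The only step needing care is the fiber-dimension argument. The fibers of \(\operatorname{pr}_1\) are all isomorphic to \(F_g\), and \(\Cl_G(g)\) is irreducible as the image of the irreducible group \(G\) (if \(G\) is not connected, argue on each component). The standard upper semicontinuity bound \(\dim\mathfrak I\le\dim(\text{base})+\max\dim(\text{fiber})\) then applies. If \(\mathfrak I_g(r,M)\) is reducible, apply the bound to each irreducible component separately.
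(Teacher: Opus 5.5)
Your proposal is correct and follows essentially the same route as the paper. You decompose $h$-stable $r$-planes along the eigenspaces, bound the resulting product of Grassmannians by $r\omega(M)-r$, and add $\dim\Cl_G(g)$; your termwise bound $r_\lambda(m_\lambda-r_\lambda)\le r_\lambda(\omega(M)-1)$ is the paper's $\sum_\lambda f_\lambda^2\ge r$ in another form, and your fiber-dimension argument via the translation $U\mapsto xU$ spells out a step the paper only asserts from the equality of eigenspace multiplicities across the conjugacy class.
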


\begin{proof}
Fix \(h\in \Cl_G(g)\), and write
\(
M=\bigoplus_\lambda M_\lambda
\)
for its eigenspace decomposition. Since \(h\) is semisimple, every
\(h\)-stable \(r\)-plane \(U\subset M\) decomposes as
\[
U=\bigoplus_\lambda U_\lambda,
\qquad
U_\lambda\subset M_\lambda.
\]
If \(f_\lambda:=\dim U_\lambda\), then \(\sum_\lambda f_\lambda=r\), and
for a fixed admissible \((f_\lambda)\), the corresponding locus of
\(h\)-stable \(r\)-planes is
\(
\prod_\lambda \Gr(f_\lambda,M_\lambda).
\)
Its dimension is therefore
$
\sum_\lambda
f_\lambda\bigl(\dim M_\lambda-f_\lambda\bigr).
$

All elements of \(\Cl_G(g)\) have the same eigenspace multiplicities on
\(M\). Hence
\[
\dim Z_g(r,M)
\le
\dim \Cl_G(g)
+
\max_{\sum_\lambda f_\lambda=r}
\sum_\lambda
f_\lambda\bigl(\dim M_\lambda-f_\lambda\bigr),
\]
where the maximum is taken over all the admissible tuples.

Since \(\dim M_\lambda\le \omega(M)\) for every \(\lambda\), we have
\[
\begin{aligned}
\sum_\lambda
f_\lambda\bigl(\dim M_\lambda-f_\lambda\bigr)
&\le
r\omega(M)-\sum_\lambda f_\lambda^2 
\le
r\omega(M)-r,
\end{aligned}
\]
because the \(f_\lambda\) are nonnegative integers and
\(
\sum_\lambda f_\lambda^2
\ge
\sum_\lambda f_\lambda
=
r.
\)
This proves \eqref{eq:incbd}.

Finally,
\[
\begin{gathered}
\dim \Cl_G(g)+r\omega(M)-r
<
r(\dim M-r) \\
\Longleftrightarrow \\
\dim \Cl_G(g)+r\omega(M)-r
<
r\dim M-r^2 \\
\Longleftrightarrow \\
\dim \Cl_G(g)+r(r-1)
<
r\bigl(\dim M-\omega(M)\bigr) \\
\Longleftrightarrow \\
\dim \Cl_G(g)+r(r-1)
<
r\gamma(M).
\end{gathered}
\]
Thus \eqref{eq:inccrit} is a sufficient condition.
\end{proof}

\subsection{The equal-degree equations case}\label{sec:eq}

In this section we study general complete intersections whose lowest-degree block
consists of several equations of the same degree, say $(d^r)$. More specifically, in
this section we will prove:
\begin{theorem}\label{thm:eq}
Let $k$ be an algebraically closed field of characteristic $p\ge 0$. Let \(X\subset
\PP^n_k\) be a general complete intersection of equal multidegree $(d^r)$, with \(n-r\ge
1\). Assume either \(d\ge 3,\ r\ge 2\), or \(d=2,\ r\ge 3\). Then $X$ admits no
nontrivial linear automorphism of prime order $\ell\neq p$.
\end{theorem}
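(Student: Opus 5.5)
The plan is to apply \Cref{prop:inc} with \(G=\GL(V)\), \(M=S_d=\Sym^d(V^\vee)\), and \(T^\circ\) the smooth open subset of \(\Gr(r,S_d)\) as in \eqref{eq:grdim}. By \Cref{prop:finite-reduction} and \Cref{rem:pgl}, it suffices to handle a single normalized semisimple lift \(g\in\GL(V)\) with \(g^\ell=I\) that is non-scalar. By \Cref{prop:fixed-incidence}, \Cref{thm:eq} then reduces to proving the numerical inequality \eqref{eq:inccrit},
\[
r\gamma(S_d) > \dim\Cl_{\GL(V)}(g)+r(r-1),
\]
for every such \(g\).

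First we make both sides explicit. Write \(V^\vee=\bigoplus_{\zeta\in\mu_\ell}W_\zeta\), with \(m_\zeta:=\dim W_\zeta\) and \(\sum_\zeta m_\zeta=n+1\), where at least two of the \(m_\zeta\) are positive. Then
\[
\dim\Cl(g)=(n+1)^2-\sum_\zeta m_\zeta^2=2\sum_{\zeta<\eta}m_\zeta m_\eta .
\]
Choosing an eigenbasis, \(S_d\) has a monomial basis, and each monomial is an eigenvector. So \(\omega(S_d)\) is the maximal number of degree-\(d\) monomials sharing one eigenvalue, and \(\gamma(S_d)\) counts the monomials outside a fullest eigenspace.

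To bound \(\gamma(S_d)\) from below, I would use an injection argument. Fix the eigenvalue \(\lambda\) achieving \(\omega(S_d)\), and fix two basis vectors \(x\in W_\zeta\), \(y\in W_\eta\) with \(\zeta\ne\eta\). Multiplication by \(y/x\)-type substitutions then sends eigenvectors of weight \(\lambda\) to other weights. More concretely, every monomial whose \(\lambda\)-class differs from that of \(x_i x_j^{d-1}\)-type products is counted. I expect to prove
\[
\gamma(S_d)\ \ge\ \sum_{\zeta<\eta}m_\zeta m_\eta\cdot c_d(n),
\]
where \(c_d(n)\) grows with \(d\) and \(n\) (for instance, by pairing each monomial of weight \(\lambda\) with its image under exchanging one factor between eigenspaces). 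The crudest useful version is \(\gamma(S_d)\ge \tfrac12\dim\Cl(g)\cdot\binom{n+d-2}{d-1}/(n+1)\) or similar. The decisive point is that, for \(d\ge 3\), \(\gamma(S_d)\) exceeds \(\dim\Cl(g)\) by a factor growing like \(n^{d-2}\). Since \(r\ge2\), we then get \(r\gamma(S_d)\ge 2\gamma(S_d)\). The term \(r(r-1)\) is controlled by using \(r\le n-1\) together with the extra growth of \(\gamma(S_d)\), after splitting off a summand \(r(r-1)\le r(n-2)\) from the \(r\gamma\) side.

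The main obstacle is the tight small cases. The first is \(d=2\), where \(\gamma(S_2)\) is only about \(\sum_{\zeta<\eta}m_\zeta m_\eta\) plus a correction from squares and the condition \(\zeta\eta=\lambda\); there \(r\ge3\) is exactly what is needed. The second is \(d=3\) with small \(n\) (e.g.\ \(n=3,4\)), where the asymptotic bound may fail. For \(d=2\), I would first try the extremal configuration of an involution with eigenspaces of dimensions \((1,n)\) or \((m,n+1-m)\), compute \(\gamma(S_2)=m(n+1-m)\) exactly, and check that \(3\gamma(S_2)>2m(n+1-m)+6\) whenever \(m(n+1-m)\ge n\ge 4\) (using \(n-r\ge1\)). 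I would then show that more eigenvalues or \(\ell>2\) only improve the margin, since \(\omega\) drops. The remaining small \((d,n,r)\) cases I would verify by direct monomial counting.
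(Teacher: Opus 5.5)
Your reduction of the whole theorem to verifying \eqref{eq:inccrit} with \(M=S_d\) for every non-scalar semisimple \(g\) breaks down in the quadratic case for involutions (\(\ell=2\ne p\)). There \eqref{eq:inccrit} is false for several admissible \((n,r)\). For an involution with eigenspace dimensions \((n_1,n_2)\) one has exactly \(\gamma(S_2)=n_1n_2\) and \(\dim\Cl(g)=2n_1n_2\), so \eqref{eq:inccrit} reads \((r-2)n_1n_2>r(r-1)\).

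\begin{itemize}
\item For \((n_1,n_2)=(1,n)\) with \(n=r+1\) this becomes \((r-2)(r+1)>r(r-1)\), which fails for every \(r\ge3\).
\item It also fails for \((r,n)=(3,5),(3,6),(4,6)\) with \((1,n)\), and for \((2,3)\) with \(r=3\), \(n=4\).
\item Your check \(3\gamma(S_2)>2m(n+1-m)+6\) fails at \((1,4)\), where it says \(12>14\).
\end{itemize}

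Since \(\gamma(S_2)\) is computed exactly here, no monomial counting can repair this. The loss is inside \Cref{prop:inc}, which replaces each \(\dim M_\lambda\) by \(\omega(M)\) and \(\sum_\lambda f_\lambda^2\) by \(r\). That is sharp only if the \(r\)-plane spreads over \(r\) distinct eigenspaces, which is impossible with two eigenvalues.

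The missing idea is the exact two-eigenspace count used in \Cref{prop:inv}. Write \(q_\pm=\dim(S_2)_\pm\). Stable \(r\)-planes of type \((r-u,u)\) form \(\Gr(r-u,(S_2)_+)\times\Gr(u,(S_2)_-)\). This gives codimension at least \(\Phi(u)=(r-2)q_-+u(q_+-q_--2r)+2u^2\), which is positive because \(q_-\ge r+1\) and \(q_+-q_-\ge (n+1)/2\). Take \(n=4\), \(r=3\), type \((1,4)\): the largest stratum is \(\Gr(3,11)\), of dimension \(24\), and \(24+8=32<36=\dim\Gr(3,S_2)\). The crude bound of \Cref{prop:inc} instead gives \(8+33-3=38\).

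For \(d\ge3\), \eqref{eq:inccrit} does hold, but your argument is only heuristic. The one explicit bound you write, \(\gamma(S_d)\ge\tfrac12\dim\Cl(g)\binom{n+d-2}{d-1}/(n+1)\), is false. Take \(n=9\), \(d=3\) and an involution with eigenspaces \((5,5)\): both weight spaces of \(S_3\) have dimension \(110\), so \(\gamma(S_3)=110<25\cdot 4.5\).

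What is needed, and suffices, is \(\gamma_d\ge\frac12D_g+2\) (\Cref{lem:d3gap}). The paper proves it as follows:
\begin{itemize}
\item Count weight-\(t\) monomials with a marked variable occurrence, which gives \(d\dim(S_d)_t\le(n_{\max}+d-1)\binom{n+d-1}{d-1}\).
\item Combine this with \(D_g\le\rho(2m-\rho-1)\), where \(m=n+1\) and \(\rho=m-n_{\max}\).
\item Treat \(\rho=1\) separately.
\item Then \(r=2\) is immediate, and \(r\ge3\) follows from \(D_g\ge2n\ge2r+2\).
\end{itemize}

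Likewise, for odd \(\ell\) and \(d=2\), saying that more eigenvalues only help because \(\omega\) drops is not a proof, since \(D_g\) grows too. The paper gets \(\gamma_2\ge\frac12D_g+\frac12(m-n_{\max})\) from Cauchy--Schwarz on \(\sum_a n_an_{t-a}\) and the bijectivity of doubling modulo odd \(\ell\).
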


We again introduce some notations. Let $V$ be an $(n+1)$-dimensional vector space, and
set
\[
S:=\Sym(V^\vee),\qquad S_q:=\Sym^q(V^\vee),\qquad E_q:=\dim S_q=\binom{n+q}{q}.
\]
Whenever a semisimple element \(g\) acts on \(S_q\), write \(S_q=\bigoplus_t (S_q)_t\)
for its eigenspace decomposition, and set
\[
\omega_q:=\max_t \dim (S_q)_t,
\qquad
\gamma_q:=E_q-\omega_q.
\]
More generally, if \(A=\bigoplus_{e\ge 0}A_e\) is a graded semisimple \(g\)-algebra,
write
\(
\gamma_e(A):=\dim A_e-\max_t\dim (A_e)_t.
\)
\begin{lemma}\label{lem:qgapmon}
Let \(A=\bigoplus_{e\ge 0}A_e\) be a graded domain with a semisimple action of \(g\)
preserving the grading. Assume \(A_1\neq 0\). Then
\begin{equation}\label{eq:qgapmon}
\gamma_e(A)\le \gamma_{e+1}(A)
\end{equation}
for every \(e\ge 0\).
\end{lemma}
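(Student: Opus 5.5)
The idea is to multiply by a fixed nonzero eigenvector of degree \(1\). This turns each eigenspace of \(A_e\) injectively into an eigenspace of \(A_{e+1}\), shifted in eigenvalue. Since \(g\) acts semisimply on \(A_1\neq 0\), there is a nonzero eigenvector \(x\in A_1\) with \(g\cdot x=\mu x\) for some \(\mu\in k^\times\); \(\mu\neq 0\) because \(g\) acts invertibly, being a group element. Let \(m_x\colon A_e\to A_{e+1}\) be multiplication by \(x\). Since \(A\) is a domain and \(x\ne 0\), \(m_x\) is injective. We assume \(g\) acts by graded algebra automorphisms, which is implicit in "\(g\)-algebra". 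Then for \(v\in (A_e)_t\) we get \(g(xv)=g(x)g(v)=\mu t\,xv\), so \(m_x\) maps \((A_e)_t\) injectively into \((A_{e+1})_{\mu t}\).

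The inequality \(\gamma_e(A)\le\gamma_{e+1}(A)\) is equivalent to
\[
\dim A_{e+1}-\dim A_e\;\ge\;\max_s\dim (A_{e+1})_s-\max_t\dim (A_e)_t .
\]
To prove it, I choose an eigenvalue \(s_0\) of \(g\) on \(A_{e+1}\) attaining \(\max_s\dim(A_{e+1})_s\), and set \(t_0:=\mu^{-1}s_0\). Then:
\begin{enumerate}[label=(\roman*)]
\item \(m_x\) maps \((A_e)_{t_0}\) into \((A_{e+1})_{s_0}\). Here \((A_e)_{t_0}\) is understood to be \(0\) if \(t_0\) is not an eigenvalue.
\item For every \(t\ne t_0\), \(m_x\) maps \((A_e)_t\) into \((A_{e+1})_{\mu t}\), and \(\mu t\neq s_0\).
\end{enumerate}
Hence \(A_{e+1}\) contains the direct sum \((A_{e+1})_{s_0}\oplus\bigoplus_{t\ne t_0} x(A_e)_t\). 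The summands lie in distinct eigenspaces, so the sum is direct. Counting dimensions,
\[
\dim A_{e+1}\ge \dim (A_{e+1})_{s_0}+\dim A_e-\dim (A_e)_{t_0}\ge \omega(A_{e+1})+\dim A_e-\omega(A_e),
\]
which rearranges to \(\gamma_{e+1}(A)\ge\gamma_e(A)\).

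The only delicate point is step (ii): one must use the eigenvalue matching for the single maximal eigenspace of \(A_{e+1}\), not merely the injectivity of \(m_x\). Injectivity alone only gives \(\dim A_e\le\dim A_{e+1}\) and says nothing about the gap. The domain hypothesis gives injectivity, and the semisimplicity of \(g\) on \(A_1\) gives the eigenvector \(x\). The case \(e=0\) with \(A_0\) possibly \(0\) is harmless, since then \(\gamma_0=0\le\gamma_1\).
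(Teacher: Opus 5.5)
Your proof is correct and follows the paper's argument: multiply by a nonzero degree-one eigenvector, fix a maximal eigenspace \((A_{e+1})_{s_0}\) of \(A_{e+1}\), and note that the eigenspaces \((A_e)_t\) with \(t\neq\mu^{-1}s_0\) embed injectively into the complement of \((A_{e+1})_{s_0}\). The paper phrases the conclusion as containment of that image in \(\bigoplus_{\nu\neq s_0}(A_{e+1})_\nu\) rather than as a direct sum with \((A_{e+1})_{s_0}\), but the dimension count is the same.
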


\begin{proof}
Choose a nonzero \(g\)-eigenvector \(z\in A_1\), say with eigenvalue \(\alpha\). Since
\(A\) is a domain, multiplication by \(z\) gives an injective map \(A_e\hookrightarrow
A_{e+1}\). This map sends the \(\lambda\)-eigenspace \((A_e)_\lambda\) into the
\(\alpha\lambda\)-eigenspace \((A_{e+1})_{\alpha\lambda}\).

Let \(\mu\) be an eigenvalue for which \((A_{e+1})_\mu\) has maximal dimension. Then the
image of
\[
\bigoplus_{\lambda\neq \alpha^{-1}\mu}(A_e)_\lambda
\]
is contained in
\[
\bigoplus_{\nu\neq \mu}(A_{e+1})_\nu.
\]
Hence
\[
\gamma_{e+1}(A)
=
\sum_{\nu\neq \mu}\dim(A_{e+1})_\nu
\ge
\sum_{\lambda\neq \alpha^{-1}\mu}\dim(A_e)_\lambda
\ge
\gamma_e(A).
\]
\end{proof}
\subsubsection{Prime-order elements}

By \Cref{rem:pgl}, when a projective automorphism has prime order \(\ell\neq p\), we may
represent it by a normalized lift \(g\in\GL(V)\) satisfying \(g^\ell=I\), and the
conjugacy-class dimension may be computed in \(\GL(V)\). Thus let \(g\in \GL(V)\) have
prime order \(\ell\neq p\), and assume its image in \(\PGL(V)\) is nontrivial. Then \(g\)
is semisimple. Fix a primitive \(\ell\)-th root of unity
\(\zeta\in k\). We index eigenspaces by their weights in \(\ZZ/\ell\ZZ\): for \(a\in
\ZZ/\ell\ZZ\), the subspace \(V_a\) is the \(\zeta^a\)-eigenspace of \(g\) on \(V\).
Thus, if
\[
V=\bigoplus_{a\in \ZZ/\ell\ZZ} V_a,
\qquad
n_a:=\dim V_a,
\]
then \(\sum_a n_a=n+1\). The induced action of \(g\) on each \(S_q=\Sym^q(V^\vee)\) is
likewise semisimple. We keep the notation
\[
S_q=\bigoplus_{t\in \ZZ/\ell\ZZ}(S_q)_t,
\qquad
\omega_q=\max_t\dim(S_q)_t,
\qquad
\gamma_q=E_q-\omega_q.
\]
We use the formula
\begin{equation}\label{eq:Dg}
\begin{aligned}
D_g
&:=\dim \Cl_{\GL(V)}(g) \\
&=\dim \GL(V)-\dim Z_{\GL(V)}(g) \\
&=(n+1)^2-\sum_a n_a^2,
\end{aligned}
\end{equation}
where \(Z_{\GL(V)}(g)\) denotes the centralizer of \(g\). We shall also use
\begin{equation}\label{eq:Dglb}
D_g\ge 2n
\end{equation}
for every non-scalar semisimple element. Indeed, the minimum occurs for eigenspace
multiplicities $(n,1)$.

\subsubsection{The case \texorpdfstring{$d\ge 3$}{d>=3}}

\begin{lemma}\label{lem:d3gap}
Assume \(d\ge 3\) and \(\dim V=n+1\ge 4\). Then
\begin{equation}\label{eq:d3gap}
\gamma_d>\frac12D_g+1.
\end{equation}
Moreover, \(D_g\) is even and \(\gamma_d\) is an integer, and so \(\gamma_d\ge
\frac12D_g+2\). \end{lemma}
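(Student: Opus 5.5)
By \cref{lem:qgapmon}, applied to the polynomial ring \(S=\Sym(V^\vee)\) (a graded domain with \(S_1=V^\vee\neq0\) on which \(g\) acts semisimply), we have \(\gamma_3\le\gamma_4\le\cdots\le\gamma_d\). It therefore suffices to prove \eqref{eq:d3gap} for \(d=3\). I would diagonalize \(g\) in a basis \(x_0,\dots,x_n\) of \(V^\vee\) consisting of eigenvectors, writing \(w(x_i)\in\ZZ/\ell\ZZ\) for the weight. The monomials of degree \(3\) are then an eigenbasis of \(S_3\). Fix a weight \(t\) with \(\dim(S_3)_t=\omega_3\). Then \(\gamma_3\) is exactly the number of cubic monomials whose weight differs from \(t\).

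The parity claims are formal. From \eqref{eq:Dg},
\[
D_g=(n+1)^2-\sum_a n_a^2=\sum_{a\neq b}n_an_b=2\sum_{\{a,b\},\,a\neq b}n_an_b,
\]
so \(D_g\) is even, and \(\tfrac12 D_g\) equals the number of unordered pairs \(\{i,j\}\) of basis variables with \(w(x_i)\neq w(x_j)\). Since \(\gamma_d\) is an integer, the strict inequality \(\gamma_d>\tfrac12D_g+1\) upgrades to \(\gamma_d\ge\tfrac12D_g+2\). The real content is therefore to show \(\gamma_3\ge \tfrac12 D_g+2\).

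\emph{Main count.} For each mixed pair \(\{i,j\}\) I consider the two monomials \(x_i^2x_j\) and \(x_ix_j^2\). Their weights are \(2a+b\) and \(a+2b\), with \(a=w(x_i)\) and \(b=w(x_j)\). These differ because \(a\neq b\), so at least one of them has weight \(\neq t\); I choose such a monomial. A monomial \(x_i^2x_j\) with \(i\neq j\) determines the unordered pair \(\{i,j\}\), so this choice is injective. This gives \(\tfrac12D_g\) distinct monomials of weight \(\neq t\), all of the shape \(x_i^2x_j\).

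\emph{Two extra monomials.} It remains to find at least two further monomials of weight \(\neq t\) outside this list. These must be of different shape: cubes \(x_i^3\), squarefree monomials \(x_ix_jx_k\), or the unchosen partner \(x_ix_j^2\) of some mixed pair. I would split into cases.

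\begin{enumerate}
\item If some mixed pair has both partners of weight \(\neq t\), the unused partner gives one extra monomial.
\item If \(\ell\neq3\), the cube weights \(3a\) separate distinct eigenvalue classes. Hence at most one eigenvalue class has cubes of weight \(t\), and every variable in any other class yields an extra cube of weight \(\neq t\).
\item If \(\ell=3\), all cubes have weight \(0\). In this case I would use squarefree monomials \(x_ix_jx_k\), which exist in quantity because \(n+1\ge4\). Among these, choose two with different weights, for instance by swapping one variable for a variable of another weight. At least one of the two avoids \(t\). Combining this with the mixed-pair partners \(x_ix_j^2\) should yield a second extra monomial.
\end{enumerate}

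The step I expect to be the main obstacle is this final bookkeeping in the smallest configurations. These are eigenvalue multiplicities \((n,1)\), with \(D_g=2n\), and the case \(\ell\in\{2,3\}\), where the weight coincidences are maximal. There I would simply enumerate the cubic monomials directly with \(n+1=4\), and then observe that adding variables only increases the surplus. This is where the hypothesis \(\dim V\ge4\) must be used.
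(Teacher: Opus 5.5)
Your reduction to $d=3$ via \Cref{lem:qgapmon}, the parity argument, and the injection sending each mixed pair $\{i,j\}$ to whichever of $x_i^2x_j$, $x_ix_j^2$ has weight $\neq t$ are all correct. This is a different route from the paper's. The paper double-counts monomials with a marked occurrence to get $\gamma_d\ge\frac{\rho}{d}\binom{m+d-2}{d-1}$ for every $d$, compares this with $D_g\le\rho(2m-\rho-1)$, and treats $\rho=1$ by a separate count.

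However, your injection only gives $\gamma_3\ge\frac12D_g$. The content of the lemma is the extra $+2$, which is exactly what \Cref{prop:d3} needs when $r=2$, and that step is not proved. Your cases 1 and 2 can together produce only one extra monomial. Take $\ell=2$ with multiplicities $(m-1,1)$, big class $A$ of weight $a_0$, and remaining variable $y$ of weight $b\neq a_0$.
\begin{itemize}
\item The weight-$a_0$ part of $S_3$ has dimension $\binom{m+1}{3}+(m-1)>\binom m2+1$, so $t=a_0$.
\item Every partner $xy^2$ with $x\in A$ has weight $a_0+2b=a_0=t$, so case 1 contributes nothing.
\item $y^3$ is the only cube of weight $\neq t$.
\end{itemize}
Case 3 ends with ``should yield''. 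Your fallback, that adding variables only increases the surplus, is an unproved monotonicity claim, and the maximizing weight $t$ can move as variables are added. So the argument as written does not reach $\gamma_3\ge\frac12D_g+2$.

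The missing bookkeeping is short.
\begin{itemize}
\item If $\ell=3$: the two partners of a mixed pair with weights $a\neq b$ have weights $b-a$ and $a-b$, i.e.\ $1$ and $2$, while every cube has weight $0$. If $t=0$, every unchosen partner is an extra monomial, giving $\frac12D_g\ge m-1\ge3$ of them by \eqref{eq:Dglb}. If $t\neq0$, all $m\ge4$ cubes are extra.
\item If $\ell\neq3$: multiplication by $3$ is injective on $\ZZ/\ell\ZZ$, so at most one class $a_0$ has $3a_0=t$. Every variable outside that class gives an extra cube, which already yields two extras unless $n_{a_0}=m-1$.
\item In the remaining case $n_{a_0}=m-1$, let $y$ be the remaining variable, of weight $b\neq a_0$. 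The $\binom{m-1}{2}\ge3$ squarefree monomials $x_ix_jy$, with $x_i\neq x_j$ in the big class, have weight $2a_0+b\neq 3a_0=t$.
\end{itemize}
With this case split inserted, your argument becomes a complete proof.
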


\begin{proof}
Put \(m:=\dim V=n+1\), \(n_{\max}:=\max_a n_a\), and
\(\rho:=m-n_{\max}\ge 1\). When passing from \(V\) to \(V^\vee\), the weights are
negated. This only relabels the eigenspaces, so the eigenspace multiplicities,
\(n_{\max}\), and the orbit dimension \(D_g=m^2-\sum_a n_a^2\) are unchanged. Moreover,
\begin{equation}\label{eq:Dg-even}
D_g=m^2-\sum_a n_a^2\equiv m-\sum_a n_a=0\pmod 2.
\end{equation}
Thus \(D_g\) is even.

We first prove the monomial estimate in a slightly more explicit way. Choose an
eigenbasis \(x_0,\dots,x_{m-1}\) of \(V^\vee\). From now on, the weights of variables
and monomials are the weights for the induced action on \(V^\vee\). By the preceding
observation, at most \(n_{\max}\) variables have any prescribed weight.

Fix \(t\in \ZZ/\ell\ZZ\). We count the degree-\(d\) monomials of weight \(t\) with one
marked occurrence. Namely, write a monomial \(Q=x_{i_1}\cdots x_{i_d}\), with
\(i_1\le\cdots\le i_d\), as an ordered list of its \(d\) variable-occurrences, and form
the \(d\) marked monomials obtained by putting a dagger on the first, second,
\(\ldots\), \(d\)-th occurrence:
\[
x_{i_1}^{\dagger}x_{i_2}\cdots x_{i_d},\quad
x_{i_1}x_{i_2}^{\dagger}\cdots x_{i_d},\quad \ldots,\quad
x_{i_1}\cdots x_{i_d}^{\dagger}.
\]
If several consecutive occurrences are the same variable, we still count the different
choices of the marked occurrence separately. Thus the total number of such marked
monomials is \(d\cdot \dim(S_d)_t\).

Now erase the daggered variable. What remains is a monomial \(P\) of degree \(d-1\),
and there are \(\binom{m+d-2}{d-1}\) possibilities for \(P\). We claim that, for each
fixed \(P\), at most \(n_{\max}+d-1\) marked monomials can give rise to it.

Indeed, write \(P=x_0^{\alpha_0(P)}\cdots x_{m-1}^{\alpha_{m-1}(P)}\). If the erased
daggered variable is \(x_j\), then the original monomial was \(x_jP\), so
\(\operatorname{wt}(x_j)=t-\operatorname{wt}(P)\). There are at most \(n_{\max}\)
variables of this prescribed weight. For such a variable \(x_j\), the monomial \(x_jP\)
contains \(\alpha_j(P)+1\) occurrences of \(x_j\), and the dagger may be placed on any
one of these occurrences. Hence the number of marked monomials lying over this fixed
\(P\) is
\[
\sum_{\operatorname{wt}(x_j)=t-\operatorname{wt}(P)}
\bigl(\alpha_j(P)+1\bigr)
\le
n_{\max}
+
\sum_{\operatorname{wt}(x_j)=t-\operatorname{wt}(P)}
\alpha_j(P)
\le
n_{\max}+d-1.
\]
Therefore
\[
d\cdot \dim(S_d)_t\le (n_{\max}+d-1)\binom{m+d-2}{d-1}.
\]
Since
\(E_d=\frac{m+d-1}{d}\binom{m+d-2}{d-1}\), we have, for every weight \(t\),
\[
\begin{aligned}
E_d-\dim(S_d)_t
&\ge
\left(\frac{m+d-1}{d}-\frac{n_{\max}+d-1}{d}\right)
\binom{m+d-2}{d-1} \\
&=
\frac{m-n_{\max}}{d}\binom{m+d-2}{d-1}
=
\frac{\rho}{d}\binom{m+d-2}{d-1}.
\end{aligned}
\]
Taking the minimum over the complements of all weight spaces gives
\begin{equation}\label{eq:d3mon}
\gamma_d
\ge
\frac{\rho}{d}\binom{m+d-2}{d-1}.
\end{equation}

For fixed \(m\), set \(A_d:=\frac1d\binom{m+d-2}{d-1}\). Then, for \(d\ge3\),
\[
\frac{A_{d+1}}{A_d}
=
\frac{d}{d+1}\cdot\frac{m+d-1}{d}
=
\frac{m+d-1}{d+1}
=
1+\frac{m-2}{d+1}
>1,
\]
because \(m\ge4\). Hence \(A_d\ge A_3\) for all \(d\ge3\). Since
\(A_3=\frac13\binom{m+1}{2}=\frac{m(m+1)}6\), \eqref{eq:d3mon} implies, for all
\(d\ge3\),
\begin{equation}\label{eq:d3rho}
\gamma_d\ge \frac{\rho m(m+1)}6.
\end{equation}

We also use the elementary upper bound for the orbit dimension. If \(a_0\) is chosen so
that \(n_{a_0}=m-\rho\), then the remaining eigenspaces have total dimension \(\rho\).
Therefore \(\sum_{a\ne a_0}n_a^2\ge \rho\), and hence
\begin{equation}\label{eq:d3D}
D_g
=m^2-\sum_a n_a^2
\le
m^2-(m-\rho)^2-\rho
=\rho(2m-\rho-1).
\end{equation}

Assume first that \(\rho\ge2\). Combining \eqref{eq:d3rho} and \eqref{eq:d3D}, we obtain
\[
\begin{aligned}
\gamma_d-\left(\frac12D_g+1\right)
&\ge
\frac{\rho m(m+1)}6
-
\frac{\rho(2m-\rho-1)}2
-1 \\
&=
\frac{\rho(m^2-5m+3\rho+3)}6-1.
\end{aligned}
\]
For fixed \(m\ge4\), the function
\(\rho\mapsto \rho(m^2-5m+3\rho+3)\) is increasing on \([2,\infty)\), since its
derivative is \(m^2-5m+6\rho+3\ge 16-20+12+3>0\). Hence, for \(\rho\ge2\), the
right-hand side is minimized at \(\rho=2\). Thus
\[
\gamma_d-\left(\frac12D_g+1\right)
\ge
\frac{2(m^2-5m+9)}6-1
=
\frac{(m-2)(m-3)}3>0.
\]

It remains to treat \(\rho=1\). Then the eigenspace multiplicities are \((m-1,1)\), and
\(D_g=2m-2\). We continue to work with the induced weights on \(V^\vee\). Let \(G\in\GL(V)\) be the
chosen lift, and suppose that the \((m-1)\)-dimensional eigenspace in \(V^\vee\) has
weight \(\mu\); equivalently, \(G^\vee\) acts on this eigenspace by the scalar
\(\zeta^\mu\), where \(\zeta\) is a primitive \(\ell\)-th root of unity. Replacing
\(G\) by the scalar multiple \(G':=\zeta^\mu G\), as allowed by \Cref{rem:pgl}, does
not change the associated element of \(\PGL(V)\), and still gives a normalized lift.
Moreover, $(G')^\vee=(\zeta^\mu G)^\vee=\zeta^{-\mu}G^\vee$. Thus, if \(x\in (V^\vee)_a\), so that \(G^\vee x=\zeta^a x\), then $(G')^\vee x=\zeta^{a-\mu}x$. In particular, the \((m-1)\)-dimensional eigenspace, for which \(a=\mu\), has weight
\(0\) for \(G'\). On \(S_d=\Sym^d(V^\vee)\), this only translates the degree-\(d\)
weights by \(-d\mu\), and hence merely relabels the weight spaces. Therefore
\(D_g\) and \(\gamma_d\) are unchanged, and we may assume that the
\((m-1)\)-dimensional eigenspace in \(V^\vee\) has weight \(0\). Let \(A\subset V^\vee\) be this eigenspace, and let
\(y\) generate the remaining one-dimensional eigenspace, of weight \(\alpha\neq0\).

For the weight \(0\), the complement contains all monomials of type \(A^{d-1}y\), and
this space has dimension \(\binom{m+d-3}{d-1}\). For the weight \(\alpha\), the
complement contains all monomials of type \(A^d\), whose dimension is
\(\binom{m+d-2}{d}\ge \binom{m+d-3}{d-1}\) because \((m+d-2)/d\ge1\). For any other
weight, the complement contains both of these two families. Therefore the complement of
every weight space has dimension at least \(\binom{m+d-3}{d-1}\), and hence
\(\gamma_d\ge \binom{m+d-3}{d-1}\).

Since \(d\ge3\), writing \(r=d-1\ge2\), we have
\(
\binom{m+d-3}{d-1}
=
\binom{m+r-2}{r}
\ge
\binom m2;
\)
indeed the sequence \(\binom{m+r-2}{r}\) is increasing for \(r\ge2\), and its value at
\(r=2\) is \(\binom m2\). Therefore
\(\gamma_d\ge\binom m2>m=\frac12D_g+1\).

This proves \eqref{eq:d3gap}. Since \(\gamma_d\in\ZZ\) and \(D_g\) is even by
\eqref{eq:Dg-even}, the strict inequality \eqref{eq:d3gap} is equivalent to
\(\gamma_d\ge \frac12D_g+2\). This completes the proof.
\end{proof}

\begin{proposition}\label{prop:d3}
Let \(X\subset \PP^n_k\) be a general complete intersection of type $(d^r)$ with \(d\ge
3, \\r\ge 2,n-r\ge 1\). Then $X$ admits no nontrivial linear automorphism of
prime order $\ell\neq p$.
\end{proposition}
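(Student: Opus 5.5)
The plan is to apply the Grassmannian incidence criterion, \Cref{prop:inc}, with \(G=\GL(V)\) acting on \(M=S_d\), and with \(r\) equal to the number of equations. By \Cref{prop:finite-reduction} only finitely many prime-order conjugacy classes \(\Cl(g)\) with \(\ell\neq p\) need to be excluded. By \Cref{rem:pgl} we may take a normalized semisimple lift \(g\in\GL(V)\) and compute \(D_g=\dim\Cl_{\GL(V)}(g)\) in \(\GL(V)\); this does not change the \(g\)-stable subspaces of \(S_d\). The parameter space \(T^\circ\) is open in \(\Gr(r,S_d)\) with \(\dim T^\circ=r(E_d-r)\) by \eqref{eq:grdim}. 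An element \(h\in\Cl(g)\) stabilizes \(X\) exactly when it stabilizes the \(r\)-plane \((I_X)_d\subset S_d\). So \(Z_g\) is contained in \(Z_g(r,S_d)\), and by \Cref{prop:fixed-incidence} it suffices to verify \eqref{eq:inccrit}, namely
\[
r\gamma_d> D_g+r(r-1).
\]

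The hypotheses \(r\ge 2\) and \(n-r\ge 1\) give \(n\ge 3\), so \(m=\dim V=n+1\ge 4\) and \Cref{lem:d3gap} applies. For \(r=2\) the criterion follows at once: \(2\gamma_d\ge D_g+4>D_g+2\). For general \(r\) we must beat the quadratic term \(r(r-1)\), using the constraint \(r\le n-1=m-2\). I would write
\[
r\gamma_d-D_g-r(r-1)=r\Bigl(\gamma_d-\tfrac12D_g-(r-1)\Bigr)+\Bigl(\tfrac r2-1\Bigr)D_g ,
\]
so it is enough to show that \(\gamma_d-\tfrac12D_g\ge m-2\ge r-1\) with strict inequality somewhere, or else to absorb the deficit using the nonnegative second term together with \(D_g\ge 2n\) from \eqref{eq:Dglb}.

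To get the needed bound on \(\gamma_d-\tfrac12D_g\), I would reuse the intermediate bounds from the proof of \Cref{lem:d3gap} rather than its final statement, splitting by \(\rho=m-n_{\max}\).
\begin{itemize}
\item If \(\rho=1\), then \(\gamma_d\ge\binom m2\) and \(\tfrac12D_g=m-1\). The difference is then at least \(\binom m2-(m-1)\), which exceeds \(m-2\) since \(\binom m2-2m+3=\tfrac{(m-2)(m-3)}2\ge 0\), with equality only at \(m=3\), excluded.
\item If \(\rho\ge 2\), the proof gives \(\gamma_d-\tfrac12D_g-1\ge \tfrac{(m-2)(m-3)}3\). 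This is at least \(m-3\) once \(m\ge 5\), with strictness for \(m\ge 6\).
\end{itemize}

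The main obstacle is the small edge cases \(m=4,5\) with \(\rho\ge 2\) and \(r\) close to \(m-2\). There I would keep the term \((\tfrac r2-1)D_g\) in the identity above, using the upper bound \(D_g\le\rho(2m-\rho-1)\) only where needed and otherwise the lower bound \(D_g\ge 2(m-1)\). If this still falls short, I would use the sharper monomial bound \eqref{eq:d3mon} directly for the finitely many pairs \((m,d=3)\), noting that the estimate only improves as \(d\) grows. This yields \eqref{eq:inccrit} for every admissible class. \Cref{prop:fixed-incidence}, applied to the finite union of classes, then shows that a general \(X\) is stabilized by no element of prime order \(\ell\neq p\).
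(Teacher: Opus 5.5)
Your proposal is correct and uses the paper's framework: the finite reduction, the normalized lift, and the criterion \eqref{eq:inccrit} fed by \Cref{lem:d3gap}. The only difference is how you close the final inequality for \(r\ge 3\), and there your route is more laborious than it needs to be.

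You try to make the first bracket of your identity nonnegative by proving \(\gamma_d-\tfrac12D_g\ge m-2\). That forces you to reach inside the proof of \Cref{lem:d3gap} and reuse its \(\rho\)-dependent intermediate bounds.

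The paper uses only the lemma's stated conclusion \(\gamma_d\ge\tfrac12D_g+2\) and lets the second term absorb the deficit. This is exactly your ``or else'' alternative:
\begin{itemize}
\item the first bracket is at least \(r(3-r)\);
\item \(D_g\ge 2n\ge 2r+2\) gives \((\tfrac r2-1)D_g\ge (r-2)(r+1)\);
\item so the total is at least \(2r-2>0\) for every \(r\ge 2\), with no case split on \(\rho\) or \(m\).
\end{itemize}

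The edge cases you flag as the main obstacle are not actually problematic, and the fallback to a direct monomial count for \((m,3)\) is unnecessary. When \(m=4\), the constraint \(r\le m-2\) forces \(r=2\), which you have already handled. For \(m\ge5\), your own \(\rho\ge2\) bound gives \(\gamma_d-\tfrac12D_g\ge 1+\tfrac{(m-2)(m-3)}{3}\ge m-2\ge r-1\). For \(r\ge3\), strictness comes for free from \((\tfrac r2-1)D_g>0\).

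Your version yields a margin that grows with \(m\). The paper's version is a one-line computation and treats \Cref{lem:d3gap} as a black box.
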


\begin{proof}
By \Cref{prop:finite-reduction}, it is enough to consider the finitely many relevant
prime-order conjugacy classes. Fix one such class, represented by a nontrivial
\(g\in \PGL(V)\) of prime order \(\ell\neq p\). By \Cref{rem:pgl}, we replace \(g\) by a
normalized lift to \(\GL(V)\) and compute the conjugacy-class dimension there. For this
fixed class, \Cref{prop:fixed-incidence} reduces the generality statement to the
dimension estimate supplied by \Cref{prop:inc}. More precisely, by \eqref{eq:inccrit}, it
suffices to prove
\begin{equation}\label{eq:d3tar}
r\gamma_d>D_g+r(r-1).
\end{equation}
By \cref{lem:d3gap}, more precisely \eqref{eq:d3gap}, and by the integrality noted
there, we have \(\gamma_d\ge \frac12D_g+2\). Thus it is enough to show
\[
r\left(\frac12D_g+2\right)>D_g+r(r-1),
\]
or equivalently \(\left(\frac r2-1\right)D_g>r(r-3)\). If \(r=2\), the left-hand side is
\(0\) and the right-hand side is \(-2\), so the desired inequality holds.

Now assume \(r\ge 3\). Since \(n-r\ge 1\), we have \(n\ge r+1\). By \eqref{eq:Dglb},
\(D_g\ge 2n\ge 2r+2\). Therefore
\[
\left(\frac r2-1\right)D_g
\ge
\frac{r-2}{2}(2r+2)
=
(r-2)(r+1).
\]
For \(r=3\) this is already positive, while for \(r\ge4\), \((r-2)(r+1)-r(r-3)=2r-2>0\).
This proves \eqref{eq:d3tar}. Hence, by \Cref{prop:fixed-incidence}, the locus
stabilized by this fixed conjugacy class is contained in a proper closed subset. Taking
the finite union over the prime-order conjugacy classes allowed by
\Cref{prop:finite-reduction}, and using the finite-union assertion in
\Cref{prop:fixed-incidence}, we conclude that a general complete intersection of type
\((d^r)\) is not stabilized by any nontrivial element of prime order prime to \(p\).
\end{proof}

\subsubsection{The quadratic case: odd primes}

We now assume $d=2$ and $r\ge 3$. Let
\[
E_2=\dim \Sym^2(V^\vee)=\binom{n+2}{2}=\frac{(n+1)^2+(n+1)}{2}.
\]

\begin{lemma}\label{lem:opq1}
Assume that \(\ell\) is an odd prime distinct from \(p\). Set
$
m:=\dim V=n+1$,
$
n_{\max}:=\max_a n_a.
$
Then
\begin{equation}\label{eq:opqgap}
\gamma_2\ge \frac12D_g+\frac12(m-n_{\max}).
\end{equation}
\end{lemma}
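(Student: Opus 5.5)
The plan is to compute the weight-space dimensions of \(S_2=\Sym^2(V^\vee)\) exactly in terms of the multiplicities \(n_a\), and then bound the largest one with Cauchy--Schwarz. As in the proof of \Cref{lem:d3gap}, passing from \(V\) to \(V^\vee\) negates the weights. This only relabels the eigenspaces, so the multiplicities \(n_a\), \(n_{\max}\) and \(D_g=m^2-\sum_a n_a^2\) are unchanged. We may therefore work with an eigenbasis of \(V^\vee\) in which exactly \(n_a\) variables have weight \(a\in\ZZ/\ell\ZZ\).

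Fix a weight \(t\in\ZZ/\ell\ZZ\). The space \((S_2)_t\) is spanned by the monomials \(x_ix_j\) with \(\operatorname{wt}(x_i)+\operatorname{wt}(x_j)=t\). Since \(\ell\) is odd, \(2\) is invertible in \(\ZZ/\ell\ZZ\). Hence there is a unique weight \(c=t/2\) with \(2c=t\), and every other solution of \(a+b=t\) has \(a\neq b\). Two kinds of monomials contribute to \((S_2)_t\):
\begin{itemize}
\item each unordered pair \(\{a,b\}\) with \(a\neq b\) and \(a+b=t\) contributes \(n_an_b\) monomials;
\item the diagonal weight \(c\) contributes \(\binom{n_c+1}{2}=\tfrac12(n_c^2+n_c)\) monomials.
\end{itemize}
The ordered sum \(\sum_a n_an_{t-a}\) counts each off-diagonal pair twice and contains the diagonal term \(n_c^2\) once. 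Therefore
\[
\dim (S_2)_t=\frac12\sum_{a} n_a n_{t-a}+\frac12 n_{t/2}.
\]

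Next I bound the two terms separately. The map \(a\mapsto t-a\) is a permutation of \(\ZZ/\ell\ZZ\), so Cauchy--Schwarz (or the rearrangement inequality) gives \(\sum_a n_an_{t-a}\le\sum_a n_a^2\). Also \(n_{t/2}\le n_{\max}\). Hence, for every \(t\),
\[
\dim(S_2)_t\le \frac12\Bigl(\sum_a n_a^2+n_{\max}\Bigr),
\]
and in particular \(\omega_2\le \frac12\bigl(\sum_a n_a^2+n_{\max}\bigr)\).

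Finally, using \(E_2=\frac12(m^2+m)\) and \eqref{eq:Dg},
\[
\gamma_2=E_2-\omega_2\ge \frac12\Bigl(m^2-\sum_a n_a^2\Bigr)+\frac12(m-n_{\max})=\frac12D_g+\frac12(m-n_{\max}),
\]
which is \eqref{eq:opqgap}.

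The only delicate point is the use of oddness of \(\ell\). It guarantees that each weight \(t\) has exactly one diagonal contribution. Without it (\(\ell=2\)), several weights \(a\) could satisfy \(2a=t\), and the correction term could exceed \(\tfrac12 n_{\max}\). That is why this case is kept separate from the \(\ell=2\) case.
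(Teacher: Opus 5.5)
Your proposal is correct and follows essentially the same route as the paper: the exact formula \(2\dim(S_2)_t=\sum_a n_an_{t-a}+n_{t/2}\), Cauchy--Schwarz for the convolution term, oddness of \(\ell\) to get a single diagonal weight bounded by \(n_{\max}\), and then subtraction from \(2E_2=m^2+m\) together with \eqref{eq:Dg}. Your closing remark on \(\ell=2\) is also accurate: for \(t=0\) the diagonal term becomes \(n_0+n_1=m\), which can exceed \(n_{\max}\).
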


\begin{proof}
The weights on \(V^\vee\) are the negatives of the weights on \(V\). Since replacing
\(a\) by \(-a\) only permutes the indices in \(\ZZ/\ell\ZZ\), the multiplicities are
still the same. Thus, for the purpose of counting monomials in \(S_2=\Sym^2(V^\vee)\),
we may write
\[
V^\vee=\bigoplus_{a\in \ZZ/\ell\ZZ} W_a,
\qquad
\dim W_a=n_a,
\]
where the linear forms in \(W_a\) have weight \(a\).

For \(t\in \ZZ/\ell\ZZ\), put $q_t:=\dim(S_2)_t$.

We now compute \(q_t\) explicitly. A quadratic monomial of weight \(t\) is a product of
two linear forms whose weights \(a,b\) satisfy \(a+b=t\). There are two cases.

First suppose \(a\neq b\). Then the products of one linear form from \(W_a\) and one
from \(W_b\) span a subspace of dimension \(n_an_b\). If we sum over ordered pairs
\((a,b)\) with \(a+b=t\), this contribution is counted twice, once as \((a,b)\) and
once as \((b,a)\).

Second suppose \(a=b\). Then we must have \(2a=t\), and the corresponding contribution
is
\(
\dim \Sym^2 W_a=\binom{n_a+1}{2}=\frac{n_a^2+n_a}{2}.
\)
Hence
\[
q_t
=
\frac12\sum_{\substack{a+b=t\\ a\neq b}}n_an_b
+
\sum_{2a=t}\binom{n_a+1}{2}.
\]
Multiplying by \(2\), we get
\[
\begin{aligned}
2q_t
&=
\sum_{\substack{a+b=t\\ a\neq b}}n_an_b
+
\sum_{2a=t}(n_a^2+n_a) \\
&=
\left(
\sum_{\substack{a+b=t\\ a\neq b}}n_an_b
+
\sum_{2a=t}n_a^2
\right)
+
\sum_{2a=t}n_a \\
&=
\sum_{a+b=t}n_an_b
+
\sum_{2a=t}n_a.
\end{aligned}
\]
Thus
\begin{equation}\label{eq:opqt}
2q_t
=
\sum_{a+b=t}n_an_b+
\sum_{2a=t}n_a.
\end{equation}

Set
\(
C_t:=\sum_{a+b=t}n_an_b=\sum_a n_an_{t-a}.
\)
By Cauchy--Schwarz,
\[
C_t
=
\sum_a n_an_{t-a}
\le
\left(\sum_a n_a^2\right)^{1/2}
\left(\sum_a n_{t-a}^2\right)^{1/2}.
\]
Since the map \(a\mapsto t-a\) is a permutation of \(\ZZ/\ell\ZZ\), we have $\sum_a n_{t-a}^2=\sum_a n_a^2$.
Therefore\\$C_t\le \sum_a n_a^2$.

Because \(\ell\) is odd, the doubling map $a\longmapsto 2a$ is a bijection of \(\ZZ/\ell\ZZ\). Hence, for each fixed \(t\), there is a unique
\(a\in \ZZ/\ell\ZZ\) with \(2a=t\). Consequently, $\sum_{2a=t}n_a\le n_{\max}$.

Applying these two estimates to \eqref{eq:opqt}, we obtain, for every \(t\),
\(
2q_t
\le
\sum_a n_a^2+n_{\max}.
\)
Taking \(t\) such that \(q_t=\omega_2\), this gives
\(
2\omega_2\le \sum_a n_a^2+n_{\max}.
\)

On the other hand,
\(
E_2=\dim S_2=\binom{m+1}{2},
\)
so
\(
2E_2=m(m+1)=m^2+m.
\)
Therefore
\[
\begin{aligned}
2\gamma_2
&=
2(E_2-\omega_2) =
2E_2-2\omega_2 \\
&\ge
(m^2+m)-\left(\sum_a n_a^2+n_{\max}\right) \\
&=
\left(m^2-\sum_a n_a^2\right)+(m-n_{\max}).
\end{aligned}
\]
By the conjugacy-class dimension formula \eqref{eq:Dg},
\(
D_g=m^2-\sum_a n_a^2.
\)
Hence
\(
2\gamma_2\ge D_g+(m-n_{\max}),
\)
which is equivalent to
\(
\gamma_2\ge \frac12D_g+\frac12(m-n_{\max}).
\)
This proves the lemma.
\end{proof}

\begin{lemma}\label{lem:opq2}
Let
\(
m:=\dim V=n+1, n_{\max}:=\max_a n_a
\)
where \(n_{\max}\) is the largest eigenspace multiplicity of \(g\) on \(V\). Then
\begin{equation}\label{eq:Dgn}
D_g\ge 2n_{\max}(m-n_{\max}).
\end{equation}
\end{lemma}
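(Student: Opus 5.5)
We prove \eqref{eq:Dgn} directly from the formula \eqref{eq:Dg}, namely \(D_g=m^2-\sum_a n_a^2\). Fix an index \(a_0\) with \(n_{a_0}=n_{\max}\), and set \(\rho:=m-n_{\max}=\sum_{a\neq a_0}n_a\).

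First expand \(m^2=(n_{\max}+\rho)^2=n_{\max}^2+2n_{\max}\rho+\rho^2\). Subtracting \(\sum_a n_a^2=n_{\max}^2+\sum_{a\ne a_0}n_a^2\) gives
\[
D_g=2n_{\max}\rho+\Bigl(\rho^2-\sum_{a\neq a_0}n_a^2\Bigr).
\]

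It then suffices to show that the bracket is nonnegative. This is the elementary inequality
\[
\Bigl(\sum_{a\neq a_0}n_a\Bigr)^2\ge \sum_{a\ne a_0}n_a^2 ,
\]
which holds because all \(n_a\) are nonnegative and the cross terms \(2n_an_b\) are nonnegative. Hence \(D_g\ge 2n_{\max}(m-n_{\max})\).

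The computation is routine and has no real obstacle. The only point to note is that the bound is sharp exactly when at most one eigenspace other than the maximal one is nonzero. This is consistent with \eqref{eq:Dglb}: the case \((n,1)\) gives \(D_g=2n\). Note also that the lemma needs no hypothesis on \(\ell\) or \(p\) beyond semisimplicity, which is already built into \eqref{eq:Dg}.
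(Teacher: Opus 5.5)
Your proof is correct and is essentially the paper's argument: both isolate the cross terms with the maximal index \(a_0\) and reduce to nonnegativity of the remaining cross terms \(\sum_{a\neq b,\ a,b\neq a_0} n_an_b\). You organize this as expanding \((n_{\max}+\rho)^2\), while the paper writes \(D_g\) as a sum over ordered pairs of distinct weights, which is only a cosmetic difference.
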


\begin{proof}
By \eqref{eq:Dg},
\(
D_g=m^2-\sum_a n_a^2.
\)
Since \(m=\sum_a n_a\), we can expand \(m^2\) as
\[
m^2
=
\left(\sum_a n_a\right)^2
=
\sum_a n_a^2+\sum_{\substack{a,b\\ a\neq b}}n_an_b.
\]
Therefore
\[
D_g
=
m^2-\sum_a n_a^2
=
\sum_{\substack{a,b\\ a\neq b}}n_an_b.
\]
This expression is a sum over ordered pairs of distinct weights.

Choose an index \(a_0\) such that$n_{a_0}=n_{\max}$.We separate from the above ordered-pair sum all terms in which one of the two indices is
\(a_0\). Then
\[
\begin{aligned}
D_g
&=
\sum_{\substack{a,b\\ a\neq b}}n_an_b =
\sum_{b\neq a_0}n_{a_0}n_b
+
\sum_{a\neq a_0}n_an_{a_0}
+
\sum_{\substack{a,b\neq a_0\\ a\neq b}}n_an_b.
\end{aligned}
\]
The first two sums are equal, and each is
\(
n_{\max}\sum_{b\neq a_0}n_b
=
n_{\max}(m-n_{\max}).
\)
Hence
\[
D_g
=
2n_{\max}(m-n_{\max})
+
\sum_{\substack{a,b\neq a_0\\ a\neq b}}n_an_b.
\]
The last sum is nonnegative, because all \(n_a\) are nonnegative integers. Therefore\\
\(
D_g\ge 2n_{\max}(m-n_{\max}),
\)
as required.
\end{proof}

\begin{proposition}\label{prop:opq}
Let \(X\subset \PP^n_k\) be a general complete intersection of type \((2^r)\) with\\ \(r\ge 3,  n-r\ge 1\). Then \(X\) admits no nontrivial linear automorphism of odd
prime order \(\ell\neq p\).
\end{proposition}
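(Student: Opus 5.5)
The plan is to follow the proof of \Cref{prop:d3} exactly, substituting \cref{lem:opq1} for \cref{lem:d3gap}. By \Cref{prop:finite-reduction} only finitely many prime-order conjugacy classes need to be excluded. Fix one such class, represented by a nontrivial \(g\in\PGL(V)\) of odd prime order \(\ell\neq p\), and pass to a normalized lift as in \Cref{rem:pgl}. By \Cref{prop:fixed-incidence} and the criterion \eqref{eq:inccrit} of \Cref{prop:inc}, with \(M=S_2\) and \(G=\GL(V)\), it suffices to prove
\[
r\gamma_2>D_g+r(r-1).
\]
Write \(m=n+1\) and \(\rho:=m-n_{\max}\ge 1\). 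The hypothesis \(n-r\ge1\) gives \(m\ge r+2\).

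By \cref{lem:opq1}, \(r\gamma_2\ge \frac r2D_g+\frac r2\rho\). It is therefore enough to show
\[
\left(\tfrac r2-1\right)D_g+\tfrac r2\rho>r(r-1).
\]
Since \(r\ge3\), the coefficient of \(D_g\) is positive, so any lower bound for \(D_g\) may be inserted. I will split into two cases according to \(\rho\).

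\emph{Case \(\rho\ge2\).} Use \eqref{eq:Dglb}: \(D_g\ge 2n\ge 2(r+1)\). Then the left side is at least
\[
(r-2)(r+1)+r=r^2-2,
\]
which exceeds \(r^2-r\) because \(r\ge3\). \Cref{lem:opq2} is not even needed here.

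\emph{Case \(\rho=1\).} The same crude bound gives only \(r^2-r-2+\frac r2\), which fails for small \(r\). This is the step I expect to be the main obstacle: \cref{lem:opq1} is too lossy when the multiplicities are \((m-1,1)\). So in this case I will compute \(\gamma_2\) directly.

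As in the \(\rho=1\) part of the proof of \cref{lem:d3gap}, I rescale the lift so that the \((m-1)\)-dimensional eigenspace \(A\subset V^\vee\) has weight \(0\), and let \(y\) span the remaining eigenline, of weight \(\alpha\neq0\). The weight spaces of \(S_2\) are then:
\begin{itemize}
\item \(\Sym^2A\), of weight \(0\) and dimension \(\binom m2\);
\item \(A\cdot y\), of weight \(\alpha\) and dimension \(m-1\);
\item \(k\,y^2\), of weight \(2\alpha\neq 0,\alpha\), since \(\ell\) is odd.
\end{itemize}
Since \(m\ge r+2\ge5\), the largest of these is \(\Sym^2A\). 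Hence \(\gamma_2=(m-1)+1=m\), while \(D_g=2(m-1)\).

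The required inequality becomes \(rm>2(m-1)+r(r-1)\), that is, \((r-2)m+2-r^2+r>0\). Substituting \(m\ge r+2\) gives
\[
(r-2)(r+2)+2-r^2+r=r-2>0.
\]
This settles the case \(\rho=1\).

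With both cases done, the incidence locus for each fixed class has dimension smaller than \(\dim\Gr(r,S_2)\). Taking the finite union over the classes allowed by \Cref{prop:finite-reduction}, and using the finite-union assertion in \Cref{prop:fixed-incidence}, a general \(X\) of type \((2^r)\) has no nontrivial linear automorphism of odd prime order \(\ell\neq p\).
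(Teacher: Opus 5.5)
Your proof is correct and follows the paper's argument essentially verbatim. It uses the same reduction to \(r\gamma_2>D_g+r(r-1)\) via \cref{lem:opq1}, the same split on \(\rho\), and the same direct computation \(\gamma_2=m\), \(D_g=2m-2\) when \(\rho=1\). For \(\rho\ge2\) you invoke \eqref{eq:Dglb} instead of \cref{lem:opq2}, but the paper only extracts \(D_g\ge 2(m-1)=2n\) from \cref{lem:opq2} there, so the resulting bound \((r-2)(r+1)\) is identical.
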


\begin{proof}
By \Cref{prop:finite-reduction}, it is enough to treat the finitely many relevant odd
prime-order conjugacy classes. Fix one such class and, using \Cref{rem:pgl}, represent it
by a normalized semisimple element \(g\in\GL(V)\). Let \(m:=n+1\), and let
$
n_{\max}:=\max_a n_a$,
$
\rho:=m-n_{\max}.
$
For this fixed class, \Cref{prop:fixed-incidence} and \Cref{prop:inc}, more precisely
\eqref{eq:inccrit}, reduce the statement to
\begin{equation}\label{eq:opqtar}
r\gamma_2>D_g+r(r-1).
\end{equation}
By \eqref{eq:opqgap}, we have
\[
r\gamma_2-D_g-r(r-1)
\ge
\left(\frac r2-1\right)D_g+\frac r2\rho-r(r-1).
\]

We split cases according to \(\rho\).

If \(\rho\ge2\), then \eqref{eq:Dgn} gives
\[
\begin{aligned}
r\gamma_2-D_g-r(r-1)
&\ge
(r-2)(m-\rho)\rho+\frac r2\rho-r(r-1) \\
&\ge
(r-2)(m-1)+r-r(r-1).
\end{aligned}
\]
Here we used two elementary inequalities: \(\rho(m-\rho)\ge m-1\) for
\(2\le\rho\le m-1\), and \(\frac r2\rho\ge r\) for \(\rho\ge2\). The first follows because
\(\rho(m-\rho)\) is concave in \(\rho\), so on the interval \([2,m-1]\) its minimum occurs
at an endpoint, where it is at least \(m-1\). Since \(n-r\ge1\), we have \(m\ge r+2\).
Hence
\[
r\gamma_2-D_g-r(r-1)
\ge
(r-2)(r+1)+r-r(r-1)
=
r-2>0.
\]

If \(\rho=1\), then the eigenspace multiplicities are \((m-1,1)\). Thus \(D_g=2m-2\).
After translating weights, let \(A\) be the \((m-1)\)-dimensional eigenspace of weight
\(0\), and let the remaining one-dimensional eigenspace have weight \(\alpha\neq0\).
Since \(\ell\) is odd, the weights \(0\), \(\alpha\), and \(2\alpha\) are distinct. The
largest eigenspace of \(\Sym^2(V^\vee)\) is therefore \(\Sym^2A\), and \(\gamma_2=m\).
Therefore
\[
\begin{aligned}
r\gamma_2-D_g-r(r-1)
&=rm-(2m-2)-r(r-1) \\
&=(r-2)m+2-r(r-1) \\
&\ge (r-2)(r+2)+2-r(r-1) \\
&=r-2>0.
\end{aligned}
\]
This proves \eqref{eq:opqtar}. Hence \Cref{prop:fixed-incidence} makes the stabilized
locus for this fixed conjugacy class proper. The relevant classes are finite by
\Cref{prop:finite-reduction}, and their finite union is proper by the finite-union
assertion in \Cref{prop:fixed-incidence}. This proves the proposition.
\end{proof}

\subsubsection{The quadratic case: involutions}

\begin{proposition}\label{prop:inv}
Assume \(\operatorname{char}k\neq 2\). Let \(X\subset \PP^n_k\) be a general complete
intersection of type \((2^r)\) with \(r\ge 3, n-r\ge 1\). Then \(X\) admits no
nontrivial involution.
\end{proposition}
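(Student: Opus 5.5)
For an involution the semisimple incidence machinery of \Cref{sec:inc} applies directly, but the crude criterion \eqref{eq:inccrit} turns out to be just too weak, so I would use the sharper intermediate bound from the proof of \Cref{prop:inc}. By \Cref{prop:finite-reduction} and \Cref{rem:pgl}, it suffices to fix one class of a non-scalar \(g\in\GL(V)\) with \(g^2=I\). Then \(V^\vee=W_+\oplus W_-\) with \(\dim W_-=j\) and \(\dim W_+=m-j\), where \(m=n+1\) and \(1\le j\le m/2\) after possibly replacing \(g\) by \(-g\). By \eqref{eq:Dg}, \(D_g=2j(m-j)\). On \(S_2\) there are exactly two eigenspaces:
\[
(S_2)_+=\Sym^2W_+\oplus\Sym^2W_-,\qquad (S_2)_-=W_+W_-.
\]
Their dimensions are \(P:=\binom{m-j+1}{2}+\binom{j+1}{2}\) and \(Q:=j(m-j)\). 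Since \(P\ge Q\), we get \(\gamma_2=Q=D_g/2\).

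Plugging \(\gamma_2=D_g/2\) into \eqref{eq:inccrit} requires \((r-2)j(m-j)>r(r-1)\). In the worst case \(j=1\), \(m=r+2\), this reads \((r-2)(r+1)>r(r-1)\), which fails by exactly \(2\). The loss comes from the step \(\sum f_\lambda^2\ge r\) in \Cref{prop:inc}. So instead I will use the exact bound proved there:
\[
\dim Z_g\le D_g+\max_{f_++f_-=r}\bigl(f_+(P-f_+)+f_-(Q-f_-)\bigr).
\]
I will compare this directly with \(r(E_2-r)\), where \(E_2=P+Q\).

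Setting \(f_-=f\) and \(f_+=r-f\), a short expansion gives
\[
r(P+Q-r)-\bigl[(r-f)(P-r+f)+f(Q-f)\bigr]=(r-f)Q+f(P-Q)+2f(f-r)+fr.
\]
The right-hand side simplifies to \((r-f)Q+f(P-r)+f^2\), which should be double-checked. The required inequality is that this exceeds \(D_g=2Q\) for every \(0\le f\le \min(r,Q)\).
\begin{itemize}
\item For \(f=0\) the quantity is \(rQ>2Q\), which holds because \(r\ge3\). This is exactly where \(r\ge3\) is used, and it is the only place where \((2,2)\) genuinely fails.
\item For \(f\ge1\), \(P\) is quadratic in \(m\) and \(m\ge r+2\). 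I expect \(P-r\ge Q\) in all cases, or at least \(f(P-r)+(r-f)Q\ge rQ\) whenever \(P-r\ge Q\), which gives the bound.
\end{itemize}
To confirm \(P-r\ge Q\), note that
\[
P-Q=\tfrac12\bigl((m-2j)^2+m\bigr)\ge m/2,
\]
which is not quite \(\ge r\) when \(j=m/2\). In that balanced case I will keep the \(f^2\) term and use the fact that \(Q=m^2/4\) is large, so that
\[
(r-f)Q+f(P-r)+f^2\ge (r-f)Q+f\bigl(Q+m/2-r+f\bigr)\ge rQ-f\,r/2+f^2\cdots,
\]
which should still exceed \(2Q\) since \(Q\ge (r+2)^2/4\).

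The main obstacle is precisely this bookkeeping: the crude estimate \eqref{eq:incbd} is off by an additive constant, so one must work with the exact Grassmannian dimensions \(f_\pm(\dim M_\pm-f_\pm)\) and check all \(f\) and \(j\), the tight cases being \(j=1\) and \(j=m/2\). Once the inequality holds for each involution class, \Cref{prop:fixed-incidence} together with the finiteness of classes finishes the proof.
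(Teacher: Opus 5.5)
Your strategy is the paper's. You correctly observe that the crude criterion \eqref{eq:inccrit} fails by exactly $2$ at $j=1$, $m=r+2$, and you switch to the exact count over the strata $(f_+,f_-)=(r-f,f)$, comparing with $D_g=2Q$. The problem is the central identity. The correct expansion is
\[
r(P+Q-r)-\bigl[(r-f)(P-r+f)+f(Q-f)\bigr]=(r-f)Q+fP-2rf+2f^2 .
\]
Your first expression equals this plus $f(r-Q)$, and your ``simplified'' expression $(r-f)Q+f(P-r)+f^2$ equals it plus $f(r-f)$, so the two are not even equal to each other. The simplified form overstates the codimension by $f(r-f)$. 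As a result, the sufficient condition you aim for, namely $P-r\ge Q$ giving a bound $\ge rQ$, is not the relevant one: with the correct formula the coefficient of $f$ is $P-2r+2f$, not $P-r+f$. Your balanced-case estimate at $j=m/2$ is therefore carried out for the wrong quantity. As written, the case $f\ge 1$ is not established, and you leave it at ``I expect'' and ``should still exceed'' in any event.

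The repair is short and needs no split into $j=1$ and $j=m/2$. The excess of the true codimension over $D_g$ is
\[
\Phi(f)=(r-2)Q+f\,(P-Q-2r)+2f^2 .
\]
Two bounds hold uniformly in $j$: $Q=j(m-j)\ge m-1\ge r+1$, and $P-Q=\tfrac12\bigl((m-2j)^2+m\bigr)\ge \tfrac m2\ge \tfrac{r+2}{2}$. Together they give $\Phi(f)\ge (r-2)(r+1)-\tfrac{3r-2}{2}f+2f^2$ for $f\ge 0$. The minimum of this quadratic over real $f$ is $\tfrac{23r^2-20r-68}{32}$, which is positive for $r\ge 3$. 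This is exactly how the paper finishes, so once the algebra is corrected your argument coincides with the paper's proof.
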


\begin{proof}
Fix a nontrivial involution class in \(\PGL(V)\), represented by \(\sigma\). By
\Cref{rem:pgl}, choose a normalized lift, still denoted \(\sigma\), in \(\GL(V)\), so
that \(\sigma^2=I\). Because \(\operatorname{char}k\neq2\), the polynomial \(x^2-1\) has
distinct roots. Hence \(\sigma\) is semisimple with eigenvalues contained in
\(\{1,-1\}\). Thus \(V=V_+\oplus V_-\), where \(V_+\) and \(V_-\) are the \(+1\)- and
\(-1\)-eigenspaces of \(\sigma\), respectively. Write
$\dim V_+=n_1$, $\dim V_-=n_2$, $n_1+n_2=n+1$.
Since the class of \(\sigma\) in \(\PGL(V)\) is nontrivial, both eigenspaces are
nonzero.

For \(S_2=\Sym^2(V^\vee)\), the induced action has the decomposition
\(S_2=(S_2)_+\oplus(S_2)_-\), where
\[
(S_2)_+=\Sym^2(V_+^\vee)\oplus\Sym^2(V_-^\vee),
\qquad
(S_2)_-=V_+^\vee\otimes V_-^\vee.
\]
Set
\[
\begin{aligned}
q_+&:=\dim(S_2)_+
=
\binom{n_1+1}{2}+\binom{n_2+1}{2}, \\
q_-&:=\dim(S_2)_-=n_1n_2.
\end{aligned}
\]

If \(U\subset S_2\) is a \(\sigma\)-stable \(r\)-plane, then
\[
U=U_+\oplus U_-,
\qquad
U_+:=U\cap(S_2)_+,
\qquad
U_-:=U\cap(S_2)_-.
\]
Write
\(
f_+:=\dim U_+,
\
f_-:=\dim U_-,
\
f_++f_-=r.
\)
For fixed \((f_+,f_-)\), the corresponding locus is parametrized by
\(
\Gr(f_+,(S_2)_+)\times\Gr(f_-,(S_2)_-),
\)
and hence has dimension \\\(f_+(q_+-f_+)+f_-(q_--f_-)\). Also
\(
\dim\Cl_{\GL(V)}(\sigma)=2n_1n_2=2q_-.
\)
Therefore, after allowing \(\sigma\) to vary in its conjugacy class, the codimension in
\(\Gr(r,S_2)\) is at least
\[
\begin{aligned}
&r(q_++q_- -r)
-\bigl(f_+(q_+-f_+)+f_-(q_--f_-)+2q_-\bigr) \\
&\qquad = f_-(q_+-f_+)+f_+(q_--f_-)-2q_-.
\end{aligned}
\]
Writing \(f_-=u\) and \(f_+=r-u\), this lower bound is
\[
\Phi(u):=u(q_+-r+u)+(r-u)(q_--u)-2q_-,
\qquad
0\le u\le r.
\]
It remains to show that \(\Phi(u)>0\) for all \(u\). Put \(m:=n_1+n_2=n+1\). Then \(m\ge
r+2\), and since \(n_1,n_2>0\), \(q_-=n_1n_2\ge m-1\ge r+1\). Moreover
\[
q_+-q_-
=
\frac{(n_1-n_2)^2+m}{2}
\ge
\frac m2
\ge
\frac{r+2}{2}.
\]
Using
\(
\Phi(u)=(r-2)q_-+u(q_+-q_- -2r)+2u^2,
\)
we get
\[
\Phi(u)
\ge
(r-2)(r+1)-\frac{3r-2}{2}u+2u^2.
\]
The right-hand side, viewed as a quadratic polynomial in the real variable \(u\), has
minimum
\[
(r-2)(r+1)-\frac{(3r-2)^2}{32}
=
\frac{23r^2-20r-68}{32},
\]
which is strictly positive for every \(r\ge3\): at \(r=3\) the numerator is \(79\), and it
is increasing for \(r\ge3\). Hence \(\Phi(u)>0\) for all \(0\le u\le r\), and therefore
\(\dim Z_\sigma(r,S_2)<\dim\Gr(r,S_2)\). By \Cref{prop:fixed-incidence}, the stabilized
locus for this fixed involution class is contained in a proper closed subset. There are
only finitely many involution classes, indexed by \((n_1,n_2)\) up to interchange. Hence,
again by the finite-union part of \Cref{prop:fixed-incidence}, the union of all
involution-stabilized loci is still proper. This proves the proposition.
\end{proof}

\begin{proof}[Proof of \Cref{thm:eq}]
If $d\ge 3$, the claim follows from \cref{prop:d3}. Now assume $d=2$ and $r\ge 3$.
Prime-to-characteristic automorphisms of odd prime order are excluded by
\cref{prop:opq}. If the prime order is $2$, then necessarily \(\operatorname{char}k\neq
2\), and this case is excluded by \cref{prop:inv}. Thus, in every admissible
equal-degree case, there is no nontrivial prime-to-characteristic linear automorphism of
prime order.
\end{proof}

\subsection{The mixed-degree case}\label{sec:mix}

Throughout this subsection, if \(M\) is a finite-dimensional vector space equipped with
a semisimple action of \(g\), we set
\[
\omega(M):=\max_\lambda \dim M_\lambda,
\qquad
\gamma(M):=\dim M-\omega(M).
\]
For a graded quotient \(R\), we write
\(
\gamma_e(R):=\dim R_e-\max_\lambda\dim(R_e)_\lambda,
\)
and for a fixed semisimple element \(g\) we write \(D_g:=\dim\Cl_{\GL(V)}(g)\).
Whenever \(g\) comes from a projective automorphism of prime order \(\ell\neq p\), we use
\Cref{rem:pgl} to replace it by a normalized lift to \(\GL(V)\); the number \(D_g\) then
also equals the corresponding projective conjugacy-class dimension.

\begin{lemma}\label{lem:qhalf}
Let $g\in \GL(V)$ be a semisimple element of prime order $\ell\neq p$ whose image in
$\PGL(V)$ is nontrivial, and set \(D_g:=\dim \Cl_{\GL(V)}(g)\). With the notation
introduced above, namely
\[
\begin{aligned}
E_2&:=\dim S_2=\binom{n+2}{2}, \\
\omega_2&:=\max_t\dim(S_2)_t,
\qquad
\gamma_2:=E_2-\omega_2,
\end{aligned}
\]
one has
\begin{equation}\label{eq:qhalf}
\gamma_2\ge \frac12D_g.
\end{equation}
Moreover, if $\ell$ is odd, then
\begin{equation}\label{eq:qodd}
\gamma_2\ge \frac12D_g+1.
\end{equation}
\end{lemma}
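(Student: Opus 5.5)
The odd case \eqref{eq:qodd} should follow directly from \Cref{lem:opq1}. That lemma gives
\[
\gamma_2\ge \tfrac12 D_g+\tfrac12(m-n_{\max}),
\]
and since the image of \(g\) in \(\PGL(V)\) is nontrivial, \(g\) is not scalar. Hence \(\rho:=m-n_{\max}\ge1\), which yields \(\gamma_2\ge \tfrac12 D_g+\tfrac12\). Integrality then finishes the argument. The quantity \(\gamma_2\) is an integer, and \(D_g\) is even by the parity computation \eqref{eq:Dg-even}: that computation only used \(m^2\equiv m\) and \(n_a^2\equiv n_a \pmod 2\), so it holds for any semisimple \(g\). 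The strict inequality \(\gamma_2>\tfrac12 D_g\) between an integer and an integer therefore upgrades to \(\gamma_2\ge \tfrac12 D_g+1\).

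It remains to prove \eqref{eq:qhalf} when \(\ell=2\), in which case \(p\ne2\). Use the normalized lift \(\sigma\) with \(V=V_+\oplus V_-\), of dimensions \(n_1,n_2\ge1\), as in the proof of \Cref{prop:inv}. There
\[
q_+=\binom{n_1+1}2+\binom{n_2+1}2,\qquad q_-=n_1n_2,\qquad D_g=2n_1n_2,
\]
and we computed \(q_+-q_-=\tfrac12\bigl((n_1-n_2)^2+m\bigr)>0\). Consequently \(\omega_2=q_+\) and \(\gamma_2=q_-=n_1n_2=\tfrac12 D_g\), which is exactly \eqref{eq:qhalf}, with equality.

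For a uniform argument covering every \(\ell\), one could instead run the proof of \Cref{lem:opq1} verbatim. By \eqref{eq:opqt},
\[
2q_t=C_t+\sum_{2a=t}n_a,\qquad C_t\le\sum_a n_a^2 .
\]
The only step that changes is the bound on \(\sum_{2a=t}n_a\). When \(\ell=2\) the doubling map is no longer a bijection, but \(\sum_{2a=t}n_a\le m\) still holds trivially. This gives \(2\gamma_2\ge D_g+m-m=D_g\), which is \eqref{eq:qhalf} for all \(\ell\). No step is a real obstacle. The only points needing care are the parity of \(D_g\) outside the setting of \Cref{lem:d3gap} and the use of non-scalarity to get \(\rho\ge1\).
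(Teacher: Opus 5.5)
Your proposal is correct and follows the paper's own proof. The odd case combines \Cref{lem:opq1}, non-scalarity ($n_{\max}\le n$) and integrality of $\gamma_2$ and $\tfrac12D_g$, and the case $\ell=2$ is the direct computation $\gamma_2=n_1n_2=\tfrac12D_g$. You also make explicit two points the paper leaves implicit, namely that $D_g$ is even for any semisimple $g$ and why $(S_2)_+$ is the larger eigenspace, and your optional uniform variant using $\sum_{2a=t}n_a\le m$ is likewise valid.
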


\begin{proof}
Choose a primitive \(\ell\)-th root of unity \(\zeta\). Decompose \(V\) into
\(g\)-eigenspaces
\[
V=\bigoplus_{a\in \mathbb Z/\ell\mathbb Z}V_a,
\qquad
V_a:=\{v\in V: g v=\zeta^a v\}.
\]
Set
\(
n_a:=\dim V_a,
\
n_{\max}:=\max_a n_a.
\)
If $\ell$ is odd, \cref{lem:opq1} gives
\(
\gamma_2\ge \frac12D_g+\frac12\bigl((n+1)-n_{\max}\bigr).
\)
Since the image of $g$ in $\PGL(V)$ is nontrivial, $n_{\max}\le n$. Thus the second term
is positive. Since $\gamma_2$ and $\frac12D_g$ are integers, this gives \(\gamma_2\ge
\frac12D_g+1\). If $\ell=2$, write
\(
V=V_+\oplus V_-,
\
\\\dim V_+=n_1,\ \dim V_-=n_2.
\)
Then \(D_g=2n_1n_2, \gamma_2=n_1n_2\). Hence
\begin{equation}\label{eq:qinv}
\gamma_2=\frac12D_g.
\end{equation}
\end{proof}

\begin{lemma}\label{lem:qqgap}
Let \(Q\in S_2\) be a smooth \(g\)-eigenquadric, meaning that \(g\cdot Q=\chi Q\) for
some \(\chi\in k^\times\), and assume \(\dim V=n+1\ge 4\). Set \(R:=S/(Q)\). Then, for
every \(d\ge 3\),
\begin{equation}\label{eq:qqgap}
\gamma_d(R)\ge \gamma_2-1.
\end{equation}
Moreover, if \(\ell=2\), then
\begin{equation}\label{eq:qqinv}
\gamma_d(R)\ge \gamma_2+1.
\end{equation}
\end{lemma}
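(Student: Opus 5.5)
The plan is to reduce everything to low degrees using the monotonicity \Cref{lem:qgapmon}, applied to the graded quotient \(R=S/(Q)\). Since \(Q\) is a smooth quadric in \(\dim V=n+1\ge 4\) variables, it is irreducible. Hence \(R\) is a graded domain with \(R_1=V^\vee\neq 0\). Because \(Q\) is a \(g\)-eigenvector, the ideal \((Q)\) is \(g\)-stable, so \(g\) acts semisimply on \(R\) preserving the grading, and \Cref{lem:qgapmon} gives
\[
\gamma_2(R)\le\gamma_3(R)\le\gamma_4(R)\le\cdots .
\]
So it suffices to prove \(\gamma_2(R)\ge\gamma_2-1\) for the general claim, and \(\gamma_3(R)\ge\gamma_2+1\) for the involution claim.

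For the first inequality, note that \(R_2=S_2/kQ\) equivariantly. Every eigenspace of \(R_2\) is a quotient of the corresponding eigenspace of \(S_2\), so \(\omega(R_2)\le\omega_2\). Therefore
\[
\gamma_2(R)=(E_2-1)-\omega(R_2)\ge E_2-1-\omega_2=\gamma_2-1,
\]
and \eqref{eq:qqgap} follows by monotonicity.

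For \(\ell=2\), I would compute \(R_3\) explicitly and compare with \(\gamma_2=n_1n_2\) from \eqref{eq:qinv}.
\begin{itemize}
\item Write \(V^\vee=W_+\oplus W_-\) with \(\dim W_\pm=n_{1},n_2\), where \(n_1,n_2\ge1\) and \(n_1+n_2\ge4\). Then
\[
(S_3)_+=\Sym^3W_+\oplus W_+\Sym^2W_-,\qquad (S_3)_-=\Sym^2W_+\,W_-\oplus\Sym^3W_-.
\]
Moreover \((R_3)_\pm=(S_3)_\pm/\bigl(Q\cdot S_1\bigr)_\pm\), and multiplication by \(Q\) is injective.
\item There are two subcases according to the eigenvalue \(\chi\) of \(Q\).
\begin{itemize}
\item If \(\chi=+1\) (so \(Q\in\Sym^2W_+\oplus\Sym^2W_-\), with both blocks nondegenerate by smoothness), then \(QW_+\) lies in the \(+\) part and \(QW_-\) in the \(-\) part. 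The two eigenspace dimensions are
\[
\tbinom{n_1+2}{3}+n_1\tbinom{n_2+1}{2}-n_1
\quad\text{and}\quad
n_2\tbinom{n_1+1}{2}+\tbinom{n_2+2}{3}-n_2 .
\]
\item If \(\chi=-1\) (so \(Q\in W_+W_-\), and smoothness forces \(n_1=n_2=h\ge2\)), then the roles swap. The dimensions become \(\dim(S_3)_+-h\) and \(\dim(S_3)_--h\).
\end{itemize}
\end{itemize}
Since \(R_3\) has only two eigenspaces, \(\gamma_3(R)\) is the smaller of the two dimensions. One must check that each exceeds \(n_1n_2\). In the \(\chi=+1\) case, for the first dimension,
\[
n_1\tbinom{n_2+1}{2}-n_1n_2=\tfrac12 n_1n_2(n_2-1)\ge0,
\]
so it remains to show \(\binom{n_1+2}{3}-n_1+\frac12n_1n_2(n_2-1)\ge 1\) whenever \(n_1+n_2\ge4\). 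This is clear when \(n_2\ge2\) or \(n_1\ge2\). The second dimension is symmetric. The \(\chi=-1\) case is easier, since all the binomials grow like \(h^3\).

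The main (though modest) obstacle is this final elementary case check in small dimensions, such as \((n_1,n_2)=(1,3)\) or \((3,1)\). I would handle these by direct evaluation; for example, \((1,3)\) gives \(1+6-1=6\ge4\), and \((3,1)\) gives \(10+3-3=10\ge4\). After that, the general inequalities follow from monotonicity in \(n_1,n_2\). Finally, \Cref{lem:qgapmon} propagates \(\gamma_3(R)\ge\gamma_2+1\) to all \(d\ge3\), which gives \eqref{eq:qqinv}.
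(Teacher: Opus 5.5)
Your proposal is correct and follows the paper's proof essentially step for step. For the first inequality you use the bound from $R_2=S_2/kQ$ together with \Cref{lem:qgapmon}; for $\ell=2$ you compute the two eigenspaces of $R_3=S_3/(QS_1)$ explicitly, split by the weight of $Q$, with smoothness forcing $n_1=n_2$ when $Q\in W_+W_-$. Your final arithmetic is the paper's check written differently: you express the excess as $\binom{n_1+2}{3}-n_1+\tfrac12 n_1n_2(n_2-1)$, and the odd-weight case is just the diagonal instance $n_1=n_2=h$ of that same expression, which makes your ``grows like $h^3$'' remark rigorous.
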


\begin{proof}
Since \(R_2=S_2/\langle Q\rangle\) and $\langle Q\rangle$ is one-dimensional and
$g$-stable, we have \(\gamma_2(R)\ge \gamma_2-1\). The first assertion follows from
\cref{lem:qgapmon}.

Assume now $\ell=2$. Write
\[
V^\vee=V_+\oplus V_-,
\qquad
\dim V_+=a,\quad \dim V_-=b.
\]
Since the image of $g$ in $\PGL(V)$ is nontrivial, both eigenspaces are nonzero, so
\(a,b>0\). Moreover, by assumption, \(a+b=\dim V\ge 4\).

 Suppose $Q$ has weight $+$. Recall that $\gamma_2$ is the dimension of the complement of the largest eigenspace in $S_2$. The induced action on $S_2$ has two eigenspaces: the positive weight space $(S_2)_+ = \Sym^2(V_+) \oplus \Sym^2(V_-)$ and the negative weight space $(S_2)_- = V_+ \otimes V_-$. Their dimensions are given by
\[
\dim (S_2)_+ = \binom{a+1}{2} + \binom{b+1}{2}, \qquad \dim (S_2)_- = ab.
\]
The difference between their dimensions is exactly
\[
\dim (S_2)_+ - \dim (S_2)_- = \frac{a^2+a+b^2+b-2ab}{2} = \frac{(a-b)^2+a+b}{2}.
\]
Since $a,b > 0$, this difference is strictly positive, meaning $\dim (S_2)_+ > \dim (S_2)_-$. Hence, the largest eigenspace is $(S_2)_+$, which yields $\gamma_2 = \dim (S_2)_- = ab$. 

In degree $3$, \(R_3=S_3/(QS_1)\). The two
eigenspaces of $R_3$ have dimensions
\[
D_1:=\binom{a+1}{2}b+\binom{b+2}{3}-b,
\qquad
D_2:=\binom{a+2}{3}+a\binom{b+1}{2}-a.
\]
We check explicitly that both are at least \(ab+1\). Indeed,
\[
D_1-(ab+1)
=
\frac{ab(a-1)}2+\frac{b(b-1)(b+4)}6-1.
\]
This is nonnegative as follows: if \(b\ge2\), then the second term is at least \(1\),
while the first term is nonnegative; if \(b=1\), then \(a\ge3\), and the expression is
\(a(a-1)/2-1\ge2\). Similarly,
\[
D_2-(ab+1)
=
\frac{a(a-1)(a+4)}6+\frac{ab(b-1)}2-1.
\]
If \(a\ge2\), then the first term is at least \(2\), while the second term is
nonnegative; if \(a=1\), then \(b\ge3\), and the expression is
\(b(b-1)/2-1\ge2\). Hence \(D_2\ge ab+1\) as well. Thus
\(\gamma_3(R)\ge \gamma_2+1\). 

If $Q$ has weight $-$, then
\(Q\in V_+\otimes V_-\)defines a perfect pairing between the two eigenspaces, so smoothness
forces $a=b$. Since $a+b\ge 4$, we have $a=b\ge 2$. Then \(\gamma_2=a^2\), and both
eigenspaces of $R_3$ have dimension
\(
D:=\binom{a+2}{3}+a\binom{a+1}{2}-a.
\)
Moreover
\(
D-(a^2+1)=\frac{2a^3-2a-3}{3}>0
\qquad (a\ge2).
\)
Thus \(\gamma_3(R)\ge \gamma_2+1\). Again by \cref{lem:qgapmon}, the same inequality holds
for all $d\ge 3$.
\end{proof}

\begin{lemma}\label{lem:hqgap}
Let $g\in \GL(V)$ be as above, and set \(D_g:=\dim \Cl_{\GL(V)}(g)\). Assume \(\dim
V\ge4\). Let $a\ge 3$, let $F\in S_a$ be a smooth $g$-eigenform, meaning that \(g\cdot
F=\chi F\) for some \(\chi\in k^\times\), and set \(R:=S/(F)\). Then for every $b>a$,
\begin{equation}\label{eq:hqgap}
\gamma_b(R)\ge \frac12D_g+1.
\end{equation}
\end{lemma}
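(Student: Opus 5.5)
The plan mirrors the proof of \cref{lem:qqgap}: handle one low degree of \(R=S/(F)\) directly, then propagate upward with \cref{lem:qgapmon}. Since \(F\) is irreducible (a smooth hypersurface in \(\PP^n\) with \(n\ge 3\) is integral), \(R\) is a graded domain. The action of \(g\) on \(R\) is semisimple, because \((F)\) is a \(g\)-stable graded ideal. Moreover \(R_1=S_1\neq0\). Hence \cref{lem:qgapmon} gives \(\gamma_b(R)\ge\gamma_{a+1}(R)\) for every \(b>a\), and it suffices to prove
\[
\gamma_{a+1}(R)\ge \tfrac12D_g+1 .
\]

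In degree \(a+1\) we have \(R_{a+1}=S_{a+1}/F\cdot S_1\), and \(F\cdot S_1\cong S_1\) twisted by the character \(\chi\). Passing to this quotient removes at most \(m=\dim V\) dimensions from every weight space. The first attempt is therefore the crude bound
\[
\gamma_{a+1}(R)\ge \gamma_{a+1}-m+\omega(S_1),
\]
valid because the complement of any weight space of \(R_{a+1}\) loses at most \(m-\dim(\text{one weight space of }S_1)\) dimensions. It is cleaner to simply use \(\gamma_{a+1}(R)\ge\gamma_{a+1}-(m-n_{\min\text{-relevant}})\), but the safe version is \(\gamma_{a+1}(R)\ge\gamma_{a+1}-m\).

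Next, compare with \(\frac12D_g\) using the bounds already obtained in the proof of \cref{lem:d3gap}. For \(\rho:=m-n_{\max}\ge2\), inequality \eqref{eq:d3mon} with \(d=a+1\ge4\) gives
\[
\gamma_{a+1}\ge \frac{\rho}{4}\binom{m+2}{3},
\]
since \(A_d\) is increasing. Together with \(D_g\le\rho(2m-\rho-1)\) from \eqref{eq:d3D}, this leaves a margin that is cubic in \(m\) against a linear loss \(m+1\). For \(m\ge4\) the margin should absorb the loss, and a short computation at \(\rho=2\), \(m=4\) has to be checked.

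For \(\rho=1\) we normalize weights as in \cref{lem:d3gap}: \(A\) of weight \(0\), \(y\) of weight \(\alpha\neq0\), and \(D_g=2m-2\). The complements of weight spaces in \(S_{a+1}\) contain \(A^{a}y\) or \(A^{a+1}\), so
\[
\gamma_{a+1}\ge\binom{m+a-2}{a}\ge\binom{m+1}{3}.
\]
Subtracting \(m\) still leaves far more than \(m=\frac12D_g+1\) when \(m\ge4\). A more careful count is possible here: the quotient only removes \(F\cdot A\) (dimension \(m-1\)) and \(F y\) from specific weights, each landing in a single weight space.

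The main obstacle is the small-case bookkeeping: \(m=4\), \(a=3\), and \(\ell=2\) with \(\rho=2\). In that case \(D_g=8\), so the target is \(5\), while \(\gamma_4\ge \frac24\binom63\cdot\) is about \(10\) and subtracting \(m=4\) still gives at least \(6\). If the crude subtraction fails somewhere, the plan is to compute the two or three weight-space dimensions of \(R_{a+1}\) explicitly for \(\ell=2\), as was done for \(D_1,D_2\) in \cref{lem:qqgap}. For \(\ell\) odd, the plan is to use the sharper subtraction, which removes only \(\dim(S_1)_{t-\chi}\) from weight \(t\).
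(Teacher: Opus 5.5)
Your argument does close, but it takes a detour that the paper avoids. Since \(b>a\), \cref{lem:qgapmon} already gives \(\gamma_b(R)\ge\gamma_a(R)\). In degree \(a\) the quotient \(R_a=S_a/\langle F\rangle\) removes only the \(g\)-stable line \(\langle F\rangle\), so \(\gamma_a(R)\ge\gamma_a-1\). The integral form \(\gamma_a\ge\frac12D_g+2\) of \eqref{eq:d3gap} then finishes the proof in one line, with no new counting.

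By starting in degree \(a+1\) you quotient by \(F\cdot S_1\) and lose up to \(m\) dimensions. You therefore have to redo the numerics of \cref{lem:d3gap} at \(d=a+1\) against this larger loss. This gains nothing, and it leaves the estimate with zero slack.

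Two points need fixing in your version.

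First, the bound \(\gamma_{a+1}(R)\ge\gamma_{a+1}-m+\omega(S_1)\) is not justified. If \(t^*\) maximizes \(\dim(R_{a+1})_t\), the complement of that weight space loses \(m-\dim(F\cdot S_1)_{t^*}\) dimensions. Here \((F\cdot S_1)_{t^*}\) is just one weight space of \(F\cdot S_1\cong S_1\otimes\chi\), and nothing forces its dimension to equal \(\omega(S_1)\). Only \(\gamma_{a+1}(R)\ge\gamma_{a+1}-m\) is safe; you do fall back on it.

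Second, the case \(\rho\ge2\) is not a cubic-versus-linear formality. You need, for all \(m\ge4\) and \(2\le\rho\le m-1\),
\[
\frac{\rho\, m(m+1)(m+2)}{24}-m\;\ge\;\frac{\rho(2m-\rho-1)}{2}+1 .
\]
This holds for two reasons:
\begin{itemize}
\item at \(\rho=2\) the difference is \(\frac{m(m+1)(m+2)}{12}-3m+2\ge0\);
\item raising \(\rho\) by one raises the difference by \(\frac{m(m+1)(m+2)}{24}-(m-\rho-1)>0\).
\end{itemize}
However, equality holds at \(m=4\), \(\rho=2\). The extremal configuration is not \(\ell=2\) with multiplicities \((2,2)\), where \(D_g=8\) leaves slack. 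It is multiplicities \((2,1,1)\), which forces \(\ell\ge3\). There \(D_g=10\), the target is \(6\), and your chain gives exactly \(10-4=6\).

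With this inequality written out, together with your correct \(\rho=1\) count \(\binom{m+1}{3}-m\ge m\), your proof is complete. The paper's degree-\(a\) argument is shorter because it reuses \cref{lem:d3gap} verbatim.
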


\begin{proof}
By the monotonicity \eqref{eq:qgapmon}, \(\gamma_b(R)\ge \gamma_a(R)\). Since
\(R_a=S_a/\langle F\rangle\) and $\langle F\rangle$ is one-dimensional and $g$-stable,
\(\gamma_a(R)\ge \gamma_a-1\). By \eqref{eq:d3gap}, \(\gamma_a>\frac12D_g+1\). Since
both $\gamma_a$ and $\frac12D_g$ are integers, \(\gamma_a\ge \frac12D_g+2\). Therefore
\(\gamma_b(R)\ge \frac12D_g+1\). \end{proof}

\begin{remark}\label{rem:2step}
In the mixed-degree estimates below we use the following fixed-\(g\) two-step count.
Suppose the lower-degree condition contributes codimension at least \(A\). After the
lower-degree equations are fixed, suppose the next minimal equation space is an
\(r\)-plane in a semisimple \(g\)-module \(M\). If \(\gamma(M)\ge B\), then the
condition that this \(r\)-plane be \(g\)-stable contributes codimension at least
\(rB-r(r-1)\), by the fixed-\(g\) form of the proof of \Cref{prop:inc}. Thus the total
codimension contribution is at least \(A+rB-r(r-1)\). 
\end{remark}

\begin{theorem}\label{prop:mix}
Let \(X\subset \PP^n_k\) be a general smooth complete intersection whose multidegree
begins with one of the following mixed-degree patterns:
\begin{enumerate}[label=\textup{(\arabic*)}]
 \item $(2,2,d^r)$ with $d\ge 3$, $r\ge 1$, and $n-(2+r)\ge 1$;
 \item $(2,d^r)$ with $d\ge 3$, $r\ge 1$, and $n-(1+r)\ge 1$;
 \item $(a,b^r)$ with $3\le a<b$, $r\ge 1$, and $n-(1+r)\ge 1$.
\end{enumerate}
Then $X$ admits no nontrivial linear automorphism of prime order $\ell\neq p$.
\end{theorem}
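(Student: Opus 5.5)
We argue one case at a time. By \Cref{prop:finite-reduction} and \Cref{rem:pgl} we fix one of the finitely many relevant classes, represented by a normalized semisimple lift \(g\in\GL(V)\) of prime order \(\ell\neq p\), and we set \(D_g=\dim\Cl_{\GL(V)}(g)\). By \Cref{rem:mixed-reduction} it is enough to parametrize \(X\) by two pieces of data: the lower-degree part \(Y\), taken in the open set where it is smooth, and the minimal equation space \(U\in\Gr(r,R_d(Y))\) of the next degree, which is open by \Cref{prop:bt}. We then bound the incidence variety \(\mathfrak I_g\) fibrewise over \(\Cl(g)\). For a fixed \(h\in\Cl(g)\) we must show that the locus of \(h\)-stable pairs \((Y,U)\) has codimension strictly greater than \(D_g\). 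Then \(\dim\mathfrak I_g<\dim T^\circ\), and \Cref{prop:fixed-incidence} concludes, including the finite union over classes. The codimension count uses \Cref{rem:2step}: the condition on \(Y\) contributes \(A\), and the condition on \(U\) contributes \(rB-r(r-1)\) with \(B\) a lower bound for \(\gamma(R_d(Y))\). So the target inequality is \(A+rB-r(r-1)>D_g\).

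\emph{Cases (2) and (3).} Here \(Y=V(F)\) is a single smooth hypersurface of degree \(a\), and \(h\)-stability means \(F\) is an \(h\)-eigenform. The codimension of the eigenform locus in \(\PP(S_a)\) is \(\gamma_a\), so we take \(A=\gamma_a\). For \(a=2\), \Cref{lem:qhalf} gives \(A\ge\frac12D_g\), improving to \(A\ge\frac12 D_g+1\) when \(\ell\) is odd. \Cref{lem:qqgap} gives \(B\ge\gamma_2-1\), and \(B\ge\gamma_2+1\) when \(\ell=2\). For \(a\ge3\), \Cref{lem:d3gap} gives \(A\ge\frac12D_g+2\), and \Cref{lem:hqgap} gives \(B\ge\frac12D_g+1\). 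In each subcase we get \(A+rB\ge(r+1)\frac12 D_g+O(r)\), so the target reduces to \((\frac{r-1}{2})D_g\) dominating roughly \(r(r-1)-r\). We close this using \(D_g\ge 2n\) from \eqref{eq:Dglb}, \(n\ge r+2\), and, for small \(r\), the sharper quantity \(2n_{\max}(m-n_{\max})\) from \Cref{lem:opq2}. The case \(r=1\) is immediate, since there we only need \(A+B>D_g\).

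\emph{Case (1).} Here \(Y\) is a pencil of quadrics \(\Lambda\in\Gr(2,S_2)\), and \(h\)-stability of \(\Lambda\) means \(\Lambda=\Lambda_\lambda\oplus\Lambda_\mu\) or \(\Lambda\subset (S_2)_\lambda\). \Cref{prop:inc} with \(r=2\) gives only \(A\ge 2\gamma_2-2\) (up to the class dimension), and this is too weak on its own. We add the \(U\)-contribution, where \(R=S/(Q_1,Q_2)\) and \(B\ge\gamma_d(R)\). To bound \(\gamma_d(R)\) we would apply \Cref{lem:qgapmon} to \(S/(Q_1)\) for an eigenquadric \(Q_1\), and then quotient by the line spanned by the eigenform \(Q_2\) times degree-\((d-2)\) forms. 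That costs at most \(\dim S_{d-2}/(Q_1)_{d-2}\) per eigenspace. A cleaner route is to argue directly in degree \(3\) as in \Cref{lem:qqgap}, using \(R_3=S_3/(Q_1S_1+Q_2S_1)\), and then use monotonicity (note \(R\) is a domain when \(\dim Y\ge1\) is smooth).

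An alternative for case (1), if the counts fail, is to fix a general pencil \(Y\) of \((2,2)\) type. Then \(\Aut_L(Y)\) is finite by \Cref{prop:finite-reduction} applied to \((2,2)\), and we kill its finitely many non-scalar elements acting on \(R_d(Y)\) via \Cref{lem:kill}. This requires checking that these elements act non-scalarly on \(R_d(Y)\), for instance because they act non-scalarly on \(V^\vee\subset\) a degree-shifted piece and \(R\) is a domain. This is the route the placement of \Cref{lem:kill} suggests, so we would actually adopt it first for case (1).

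The main obstacle is the bookkeeping in case (2) with \(\ell=2\). There \(A=\frac12D_g\) exactly, so all slack must come from the \(+1\) in \eqref{eq:qqinv}, and the small values \(r=1,2\) with eigenspace split \((m-1,1)\) need an explicit check.
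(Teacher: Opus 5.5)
Your proposal is correct and is essentially the paper's proof. Cases (2) and (3) go through the same two-step count \(\gamma_a+r\gamma_d(R)-r(r-1)>D_g\), using \Cref{lem:qhalf,lem:qqgap} (resp.\ \Cref{lem:d3gap,lem:hqgap}); case (1) is handled, as in the paper, by fixing a smooth pencil \(Y\), using finiteness of \(\Aut_L(Y)\) and non-scalarity on \(R_d(Y)\), and applying \Cref{lem:kill}. The extra refinements you anticipate (\Cref{lem:opq2}, and explicit checks for small \(r\) or the split \((m-1,1)\)) are unnecessary: \(D_g\ge 2n\ge 2(r+2)\) alone already leaves margins \(2r-1\), \(3r-2\) and \(3r\) in the three subcases.
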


\begin{proof}
By \Cref{prop:finite-reduction,prop:fixed-incidence}, it is enough to show,
for each of the finitely many relevant nontrivial prime-order conjugacy
classes, that the corresponding stabilized locus is proper. By
\cref{lem:fl,lem:new,prop:bt}, in each of the three mixed-degree cases it is
enough to consider the first one or two distinct degrees: every linear
automorphism preserves the lower-degree part and acts on the minimal equation
space in the next degree, while any remaining higher-degree equations impose
only additional conditions.

Whenever a conjugacy class is fixed below, we use \Cref{rem:pgl} to represent
it by a normalized semisimple element \(g\in\GL(V)\), and write
\(
D_g:=\dim\Cl_{\GL(V)}(g).
\)

\smallskip
\noindent\emph{Case 1: Type \((2,2,d^r)\).}
It is enough to work over the nonempty open locus where the
two-dimensional quadratic equation space cuts out a smooth complete
intersection of two quadrics. Let
\(
\Lambda\subset S_2
\)
be this quadratic equation space, and set
\(
Y:=V(\Lambda)\subset\PP^n,
\
A(Y):=S/I_Y.
\)
By \Cref{lem:fl}, every linear automorphism of \(X\) preserves
\(\Lambda\), and hence preserves \(Y\).

By the finiteness statement recalled in the incidence setup,
\(\Aut_L(Y)\) is finite. By \Cref{prop:bt}, after fixing \(Y\), the
degree-\(d\) minimal equation spaces giving smooth extensions contain a
nonempty open subset of
\(
\Gr\bigl(r,A(Y)_d\bigr).
\)
Moreover,
\(
r<\dim A(Y)_d.
\)
Indeed, \(Y\) is a positive-dimensional smooth complete intersection,
hence integral. If \(0\neq z\in A(Y)_1\), multiplication by \(z^{d-1}\)
gives an injection
\(
A(Y)_1\hookrightarrow A(Y)_d,
\)
and therefore
\(
\dim A(Y)_d\ge n+1>r.
\)

Let \(T(Y)\) be the finite set of nontrivial elements of prime order
different from \(p\) in \(\Aut_L(Y)\). We claim that every
\(\sigma\in T(Y)\) acts non-scalarly on \(A(Y)_d\). Since \(\sigma\)
has order prime to \(p\), its action is semisimple. As \(\sigma\) is
nontrivial in \(\PGL(V)\), there exist eigenvectors
\(
u,v\in A(Y)_1
\)
with distinct eigenvalues \(\alpha\neq\beta\). Since \(A(Y)\) is a
domain, the elements
\(
u^d,\ u^{d-1}v
\)
are nonzero, and have eigenvalues
\(
\alpha^d,\ \alpha^{d-1}\beta,
\)
respectively. These are distinct, so \(\sigma\) does not act as a
scalar on \(A(Y)_d\).

By \Cref{lem:kill}, for every fixed \(Y\), a general \(r\)-plane in
\(A(Y)_d\) is stabilized by no element of \(T(Y)\). Finally, the
relative linear automorphism incidence is finite over the smooth
two-quadric locus. Hence the corresponding invariant-subspace
incidence in the relative Grassmannian has dimension strictly smaller
than that of the whole relative Grassmannian. Together with
\Cref{prop:bt}, this shows that a general smooth extension admits no
nontrivial linear automorphism of prime order different from \(p\).

\smallskip
\noindent\emph{Case 2: Type \((2,d^r)\).}
Fix a relevant conjugacy class, with normalized representative \(g\) and
orbit dimension \(D_g\) as above. Let \(Q\) be the quadratic equation and set
\(
R:=S/(Q).
\)
For the general member, \(Q\) is smooth.

For fixed \(g\), requiring the quadratic line
\(
\langle Q\rangle\subset S_2
\)
to be \(g\)-stable contributes codimension at least \(\gamma_2\). Once such
a smooth \(g\)-eigenquadric is fixed, the degree-\(d\) minimal equation space
is an \(r\)-plane in
\(
R_d=S_d/(QS_{d-2}),
\)
and by \Cref{rem:2step} its \(g\)-stability contributes codimension at least
\(
r\gamma_d(R)-r(r-1).
\)
Thus it is enough to prove
\begin{equation}\label{eq:mix2d}
\gamma_2+r\gamma_d(R)-r(r-1)>D_g.
\end{equation}

Suppose first that \(\ell\) is odd. By \eqref{eq:qodd} and
\eqref{eq:qqgap},
\[
\gamma_2\ge\frac12D_g+1,
\qquad
\gamma_d(R)\ge\gamma_2-1\ge\frac12D_g.
\]
Hence
\[
\begin{aligned}
&\gamma_2+r\gamma_d(R)-r(r-1)-D_g\\
&\qquad\ge
\frac{r-1}{2}D_g+1-r(r-1).
\end{aligned}
\]
Since \(D_g\ge2n\) by \eqref{eq:Dglb}, and
\(
n-(1+r)\ge1
\quad\Longrightarrow\quad
n\ge r+2,
\)
we obtain
\[
\begin{aligned}
\gamma_2+r\gamma_d(R)-r(r-1)-D_g
&\ge
(r-1)(r+2)+1-r(r-1)\\
&=
2r-1>0.
\end{aligned}
\]

If \(\ell=2\), then by \eqref{eq:qinv} and \eqref{eq:qqinv},
\[
\gamma_2=\frac12D_g,
\qquad
\gamma_d(R)\ge\frac12D_g+1.
\]
Therefore
\[
\begin{aligned}
&\gamma_2+r\gamma_d(R)-r(r-1)-D_g\\
&\qquad\ge
\frac{r-1}{2}D_g+r-r(r-1)\\
&\qquad\ge
(r-1)(r+2)+r-r(r-1)\\
&\qquad=
3r-2>0.
\end{aligned}
\]
Thus \eqref{eq:mix2d} holds in both cases, and the stabilized locus associated
with the fixed conjugacy class is proper.

\smallskip
\noindent\emph{Case 3: Type \((a,b^r)\) with \(3\le a<b\).}
This case already follows from the classical theorem of Matsumura--Monsky
\cite[Theorem~5]{MatsumuraMonsky1963} on the generic triviality of the linear
automorphism group of hypersurfaces of degree at least \(3\). We nevertheless
give a direct self-contained proof in the same incidence-theoretic framework.

Fix a relevant conjugacy class, with normalized representative \(g\) and
orbit dimension \(D_g\) as above. Let \(F\) be the degree-\(a\) equation and
set
\(
R:=S/(F).
\)
For the general member, \(F\) is smooth.

For fixed \(g\), requiring the line
\(
\langle F\rangle\subset S_a
\)
to be \(g\)-stable contributes codimension at least \(\gamma_a\). Once such
a smooth \(g\)-eigenform is fixed, the degree-\(b\) minimal equation space is
an \(r\)-plane in
\(
R_b=S_b/(FS_{b-a}),
\)
and by \Cref{rem:2step} its \(g\)-stability contributes codimension at least
\(
r\gamma_b(R)-r(r-1).
\)
It is therefore enough to prove
\begin{equation}\label{eq:mixab}
\gamma_a+r\gamma_b(R)-r(r-1)>D_g.
\end{equation}

By \eqref{eq:d3gap}, together with the integrality stated there, and by
\eqref{eq:hqgap},
\[
\gamma_a\ge\frac12D_g+2,
\qquad
\gamma_b(R)\ge\frac12D_g+1.
\]
Consequently,
\[
\begin{aligned}
&\gamma_a+r\gamma_b(R)-r(r-1)-D_g\\
&\qquad\ge
\frac{r-1}{2}D_g+r+2-r(r-1).
\end{aligned}
\]
Using again \(D_g\ge2n\) and \(n\ge r+2\), we get
\[
\begin{aligned}
\gamma_a+r\gamma_b(R)-r(r-1)-D_g
&\ge
(r-1)(r+2)+r+2-r(r-1)\\
&=
3r>0.
\end{aligned}
\]
Thus \eqref{eq:mixab} holds, and the stabilized locus associated with the
fixed conjugacy class is proper.

We have therefore obtained a proper stabilized locus for every relevant
prime-order conjugacy class in each of the three cases. Since only finitely
many such classes occur by \Cref{prop:finite-reduction}, their union is
proper by the finite-union assertion in \Cref{prop:fixed-incidence}. Hence a
general complete intersection of any of the stated mixed types admits no
nontrivial linear automorphism of prime order \(\ell\neq p\).
\end{proof}

\begin{proof}[Proof of \Cref{thm:ptp}]
By \Cref{thm:eq,prop:mix}, all possible initial degree patterns are covered,
except for \((2,2)\), which is excluded by hypothesis. Hence a general \(X\)
admits no nontrivial linear automorphism of prime order \(\ell\neq p\).

If \(\sigma\) had finite order \(m>1\) prime to \(p\), then for any prime
divisor \(\ell\mid m\), the element \(\sigma^{m/\ell}\) would have order
\(\ell\neq p\), a contradiction. Thus \(X\) admits no nontrivial
prime-to-\(p\) linear automorphism.
\end{proof}

\section{Order-\texorpdfstring{$p\neq2$}{p} automorphisms}\label{sec:op}

This section proves \Cref{thm:op}. Throughout this section \(p=\operatorname{char}k\ge
3\). In the geometric applications below we are in the range \(c\ge2,  n-c\ge1\),
so \(n\ge3, \\m:=\dim V=n+1\ge4\), where $V$ is the vector space defining the ambient projective space \(\PP^n_k\). The order-\(p\) automorphisms considered here
are unipotent. We shall use the reductions from \Cref{sec:fl}: a linear automorphism
preserves the lowest-degree part of the ideal, and in the mixed-degree cases it also
acts on the minimal equation space in the next degree.

\subsection{Unipotent incidence}\label{sec:opinc}

\begin{proposition}\label{prop:unip}
Let \(g\in \Aut_L(X)\) have order \(p\). After choosing a lift to \(\GL(V)\) and
rescaling it, we may write \(g=I+N,  N\neq0,N^p=0\). For every
finite-dimensional induced \(k[g]\)-module \(M\), set \(\Delta_M:=g_M-\id_M\). Then
\(\Delta_M^p=0\). In particular, on \(S=\Sym(V^\vee)\) we write \(\Delta:=g^*-\id\).
\end{proposition}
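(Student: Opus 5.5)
The plan has two parts: first produce a normalized lift of \(g\) of the form \(I+N\), then show that every induced module inherits the nilpotency.

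\emph{Normalizing the lift.} Take any lift \(\tilde g\in\GL(V)\) of \(g\). Since \(g^p=1\) in \(\PGL(V)\), we have \(\tilde g^p=\lambda I\) for some \(\lambda\in k^\times\). Because \(k\) is algebraically closed, choose \(\mu\) with \(\mu^p=\lambda\) and replace \(\tilde g\) by \(\mu^{-1}\tilde g\); this does not change the image in \(\PGL(V)\) and gives \(\tilde g^p=I\). In characteristic \(p\) we have \(x^p-1=(x-1)^p\), so \((\tilde g-I)^p=\tilde g^p-I=0\). The binomial expansion is legitimate here because \(\tilde g\) and \(I\) commute and the middle binomial coefficients \(\binom pi\), \(0<i<p\), vanish. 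Set \(N:=\tilde g-I\). Then \(N^p=0\), and \(N\neq 0\) because \(g\) is nontrivial in \(\PGL(V)\): if \(N=0\), the lift would be the identity.

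\emph{Induced modules.} The modules under consideration are built functorially from \(V\): the dual, symmetric powers, graded pieces \(S_q\), and equivariant subquotients such as \(R_e(Y)\) or \(N_e(X)\). On each such \(M\), \(g\) acts by a linear map \(g_M\), and \(g\mapsto g_M\) is multiplicative, so \(g_M^p=(g^p)_M=\id_M\). Applying the same Frobenius binomial identity to the commuting operators \(g_M\) and \(\id_M\) gives \(\Delta_M^p=(g_M-\id_M)^p=g_M^p-\id_M=0\). Subquotients need no separate argument: the action on a stable subspace or quotient is induced, and the relation \(g_M^p=\id_M\) passes to it.

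\emph{Scalars and the statement on \(S\).} One point needs care: a rescaling of the lift by a \(p\)-th root of unity would otherwise be a concern, but the only \(p\)-th root of unity in characteristic \(p\) is \(1\), so the normalized lift is unique and no scalar ambiguity arises on \(S_q\). The last assertion of the statement is then the case \(M=S_q\) for each \(q\), with \(\Delta:=g^*-\id\) acting degreewise on \(S\); in particular \(\Delta^p=0\) on each graded piece. There is no real obstacle in this argument; the only subtle step is the uniqueness of the normalized lift just noted.
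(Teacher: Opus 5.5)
Your proof is correct and follows essentially the same route as the paper: rescale a lift so that \(g^p=I\), use \(x^p-1=(x-1)^p\) in characteristic \(p\) to get \((g-I)^p=0\), and note that the same identity holds on every induced module because \(g_M^p=\id_M\). Your two extra remarks are correct and only make explicit what the paper leaves implicit: \(N\neq0\) because \(g\) is nontrivial in \(\PGL(V)\), and the normalized lift is unique because \(1\) is the only \(p\)-th root of unity in characteristic \(p\).
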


\begin{proof}
The image of \(g\) in \(\PGL(V)\) has order \(p\). If a lift satisfies \(g^p=\lambda
I\), then after rescaling by a \(p\)-th root of \(\lambda^{-1}\), we may assume
\(g^p=I\). Since \(x^p-1=(x-1)^p\) in characteristic \(p\), this lift is unipotent, and
\((g-I)^p=g^p-I=0\). The same argument applies to all induced modules.
\end{proof}

For a finite-dimensional \(k[g]\)-module \(M\), put \(\Omega(M):=\dim\Img(\Delta_M)\).\\
If \(M=S_d=\Sym^d(V^\vee)\), write \(\Omega_d(V):=\Omega(S_d)\).

We record the basic meaning of this number. Since \(g=\id+\Delta_M\), a subspace
\(U\subset M\) is \(g\)-stable if and only if \(\Delta_M(U)\subset U\). Thus the problem
of imposing invariance under \(g\) is equivalent to imposing that \(U\) be stable under
the nilpotent operator \(\Delta_M\).

In particular, if \(L\subset M\) is a line, then \(L\) is \(g\)-stable if and only if
\(L\subset \Ker(\Delta_M)\). Indeed, the induced action of the unipotent element \(g\)
on the one-dimensional space \(L\) must be trivial. Hence the locus of \(g\)-stable
lines in \(\mathbb P(M)\) is precisely \(\mathbb P(\Ker\Delta_M)\subset \mathbb P(M)\),
and its codimension is
\(
\dim M-\dim\Ker(\Delta_M)
=
\dim\Img(\Delta_M)
=
\Omega(M).
\)
Thus \(\Omega(M)\) is exactly the number of independent linear conditions imposed by
asking a general line in \(M\) to be \(g\)-stable.

For higher-dimensional subspaces \(U\in\Gr(s,M)\), the condition is likewise
\(\Delta_M(U)\subset U\). Equivalently, the induced map \(U\longrightarrow M/U\)
obtained from \(\Delta_M\) must vanish. The rank \(\Omega(M)=\dim\Img(\Delta_M)\)
measures the size of this condition. The next lemma gives the precise Grassmannian
codimension estimate needed below.

\begin{lemma}\label{lem:stgr}
Let \(M\) be a finite-dimensional vector space with a nilpotent endomorphism \(\Delta\),
and set \(\Omega_M:=\dim\Img\Delta\). Then the locus \(\{U\in\Gr(s,M):\Delta U\subset
U\}\) has codimension at least
\begin{equation}\label{eq:stgr}
 s\Omega_M-s(s-1)
\end{equation}
in \(\Gr(s,M)\).
\end{lemma}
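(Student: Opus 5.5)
The plan is to bound the dimension of the stable locus from above by parametrizing stable \(s\)-planes through \(\Delta\)-adapted complete flags, and then compare with \(\dim\Gr(s,M)=s(\dim M-s)\). The starting point is that if \(U\) is \(\Delta\)-stable of dimension \(s\), then \(\Delta|_U\) is nilpotent. Hence \(U\) carries a complete flag
\[
0=U_0\subset U_1\subset\cdots\subset U_s=U,\qquad \dim U_i=i,\qquad \Delta(U_i)\subset U_{i-1}.
\]
For instance, one can refine the kernel filtration of \(\Delta|_U\). Every such \(U_i\) is then itself \(\Delta\)-stable.

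So I will consider the variety \(\mathcal F\) of flags \((U_1\subset\cdots\subset U_s)\) in \(M\) with \(\dim U_i=i\) and \(\Delta(U_i)\subset U_{i-1}\) for all \(i\). It is closed in the partial flag variety, and the projection \(\mathcal F\to\Gr(s,M)\), \((U_\bullet)\mapsto U_s\), surjects onto the stable locus \(\{U:\Delta U\subset U\}\). Therefore it suffices to show
\[
\dim\mathcal F\le s(\dim M-\Omega_M-1).
\]
This bound gives exactly what is wanted, since
\[
s(\dim M-s)-s(\dim M-\Omega_M-1)=s\Omega_M-s(s-1).
\]

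To bound \(\dim\mathcal F\), I will build flags inductively via the forgetful maps \(\mathcal F_i\to\mathcal F_{i-1}\) on truncated flags. Given \(U_{i-1}\), the condition \(\Delta(U_i)\subset U_{i-1}\) says exactly that \(U_i\subset \Delta^{-1}(U_{i-1})\), where \(U_i\supset U_{i-1}\) has dimension \(i\). Such a \(U_i\) corresponds to a line in \(\Delta^{-1}(U_{i-1})/U_{i-1}\). The key estimate is
\[
\dim\Delta^{-1}(U_{i-1})=\dim\Ker\Delta+\dim\bigl(U_{i-1}\cap\Img\Delta\bigr)\le(\dim M-\Omega_M)+(i-1).
\]
So each fiber of \(\mathcal F_i\to\mathcal F_{i-1}\) is a projective space of dimension at most \(\dim M-\Omega_M-1\). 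Note that \(U_{i-1}\subset\Delta^{-1}(U_{i-1})\) because \(U_{i-1}\) is stable. Iterating over \(i=1,\dots,s\) via the standard fiber-dimension bound gives the required bound on \(\dim\mathcal F\).

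The only delicate points are, first, that the fiber dimensions may jump; but we only need a uniform upper bound, which the displayed estimate gives for every \(U_{i-1}\). Second, the dimension of a constructible image is at most that of its source. I expect no real obstacle beyond writing the inductive fiber-dimension argument carefully, since the bound is sharp and comes out exactly.
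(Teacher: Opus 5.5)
Your proposal is correct and follows essentially the same route as the paper's proof. Both use flags with \(\Delta U_i\subset U_{i-1}\), the step-wise estimate \(\dim\Delta^{-1}(U_{i-1})\le \dim M-\Omega_M+i-1\), and projection of the adapted flags onto the stable locus in \(\Gr(s,M)\). The only difference is bookkeeping: you bound \(\dim\mathcal F\le s(\dim M-\Omega_M-1)\) directly, whereas the paper computes codimension in the flag variety and then subtracts the flag-fiber dimension \(\binom{s}{2}\), which is the same computation.
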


\begin{proof}
Consider flags
\[
 0=U_0\subset U_1\subset\cdots\subset U_s\subset M,
 \qquad \dim U_i=i,
\]
satisfying \(\Delta U_i\subset U_{i-1}\). At the \(i\)-th step, after \(U_{i-1}\) is
fixed, the possible choices for \(U_i/U_{i-1}\) lie in \(\Delta^{-1}(U_{i-1})/U_{i-1}\).
Since
\[
\begin{aligned}
 \dim \Delta^{-1}(U_{i-1})
 &= \dim\Ker\Delta+
 \dim\bigl(U_{i-1}\cap\Img\Delta\bigr) \\
 &\le (\dim M-\Omega_M)+(i-1)
 = \dim M-\Omega_M+i-1,
\end{aligned}
\]
the codimension in the full flag variety is at least
\[
 \sum_{i=1}^s (\Omega_M-i+1)=s\Omega_M-\binom{s}{2}.
\]
Every \(\Delta\)-stable \(s\)-plane admits such an adapted flag. Projecting to
\(\Gr(s,M)\) can lose at most the flag-fiber dimension \(\binom{s}{2}\), so the
codimension in the Grassmannian is at least
\[
 s\Omega_M-\binom{s}{2}-\binom{s}{2}=s\Omega_M-s(s-1).
\]
\end{proof}

\begin{remark}\label{rem:uinc}
We will use \eqref{eq:stgr} in the following form. Fix a nontrivial
order-\(p\) unipotent element \(g\in \PGL(V)\), and let
\(M\) be the relevant \(k[g]\)-module of minimal equations. If an
\(s\)-dimensional equation space \(U\subset M\) is preserved by \(g\), then $\Delta_M(U)\subset U$,
  $ \Delta_M:=g_M-\mathrm{id}_M$.
Thus \eqref{eq:stgr} gives a lower bound for the codimension of the
corresponding stabilized locus inside \(\Gr(s,M)\). We then compare this
codimension with the dimension of the conjugacy class of \(g\) in \(\PGL(V)\).
Since there are only finitely many order-\(p\) unipotent conjugacy classes, it
suffices to check this comparison class by class.
\end{remark}

Without changing the relevant conjugacy-class dimension, we may work in \(\GL(V)\), so below we
compute them in \(\GL(V)\) and write \(D:=\dim\Cl_{\GL(V)}(N)\). There are only finitely
many order-\(p\) unipotent conjugacy classes, since they are determined by Jordan block
sizes.

\subsection{Uniform image-rank estimates}\label{sec:opest}

All image-rank calculations in this subsection are carried out for the operator
\(\Delta_M=g_M-\id_M\) on the induced module under consideration. In particular, when
\(M\) is a tensor product or a symmetric power, \(\Delta_M\) is the operator induced by
the group action on that module.

\begin{lemma}\label{lem:ambmonp}
For every \(d\ge1\),
\begin{equation}\label{eq:ambmonp}
 \Omega_d(V)\le \Omega_{d+1}(V).
\end{equation}
\end{lemma}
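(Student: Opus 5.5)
We need \(\Omega_d(V)\le\Omega_{d+1}(V)\), where \(\Omega_d=\dim\Img(\Delta)\) on \(S_d\) and \(\Delta=g^*-\id\). The plan mirrors \Cref{lem:qgapmon}, with eigenvectors replaced by an invariant linear form. Since \(g\) is unipotent, the dual action on \(V^\vee\) is unipotent and has a nonzero fixed vector \(z\in V^\vee\), that is, \(g^*z=z\). Multiplication by \(z\) gives a map \(\mu_z\colon S_d\to S_{d+1}\). It is injective because \(S\) is a domain.

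The key point is that \(\mu_z\) intertwines the two operators \(\Delta\). Since \(g^*\) is a ring automorphism fixing \(z\), we have \(g^*(zF)=z\,g^*(F)\), and hence
\[
\Delta(zF)=z\,g^*(F)-zF=z\,\Delta(F).
\]
Therefore \(\Delta_{S_{d+1}}(zS_d)=z\,\Delta_{S_d}(S_d)\). This subspace of \(\Img\Delta_{S_{d+1}}\) has dimension \(\dim\Img\Delta_{S_d}=\Omega_d(V)\), by the injectivity of \(\mu_z\). It follows that \(\Omega_{d+1}(V)\ge\Omega_d(V)\).

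There is no real obstacle. The only points to check are that a fixed vector of the dual action exists, which is clear because a unipotent operator on a nonzero space has eigenvalue \(1\), and that multiplication by \(z\) is \(g\)-equivariant. Both are immediate. The argument also works for any graded domain with a unipotent action that has an invariant element in degree \(1\), and this form may be used later for quotient rings such as \(S/(Q)\).
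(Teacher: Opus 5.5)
Your proof is correct and is the paper's own argument. Multiplication by a nonzero $g$-invariant linear form $z\in V^\vee$ is an injective map $S_d\to S_{d+1}$ that commutes with $\Delta$, so it embeds $\Img(\Delta|S_d)$ into $\Img(\Delta|S_{d+1})$. Your closing remark about graded domains with an invariant degree-one element is how the paper later treats the quotients $S/(H)$, in \Cref{lem:qmonp} and \Cref{lem:c2qq}.
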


\begin{proof}
Choose \(0\neq z\in(V^\vee)^g\). Multiplication by \(z\) gives an injective map
\(S_d\hookrightarrow S_{d+1}\) commuting with \(\Delta\), and hence sends
\(\Img(\Delta|S_d)\) injectively into \(\Img(\Delta|S_{d+1})\).
\end{proof}

\begin{lemma}\label{lem:qmonp}
Let \(H\in S_e\), \(e\ge2\), be a nonzero \(g\)-invariant irreducible form, and set \(R:=S/(H)\).
For each \(t\ge0\), write \(R_t\) for the degree-\(t\) graded piece of \(R\), and set
\(
\Omega_t(R):=\dim \operatorname{Im}\bigl(\Delta:R_t\to R_t\bigr).
\)
Then for all \(t\ge e\),
\begin{equation}\label{eq:qmonp}
\Omega_t(R)\le \Omega_{t+1}(R).
\end{equation}
\end{lemma}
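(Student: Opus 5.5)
The plan is to copy the argument of \Cref{lem:ambmonp}, replacing the polynomial ring \(S\) by the quotient \(R=S/(H)\). Two facts carry the argument: \(R\) is a domain, and the \(g\)-action descends to \(R\).

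First I check that \(\Delta\) is well defined on \(R\). Since \(H\) is \(g\)-invariant, the principal ideal \((H)\) is \(g\)-stable: \(g(FH)=g(F)\,H\). Hence \(g\) and \(\Delta=g^*-\id\) descend to graded endomorphisms of \(R\), and each \(\Omega_t(R)\) makes sense. Because \(H\) is irreducible and \(S\) is a UFD, \((H)\) is a prime ideal, so \(R\) is a graded domain.

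Next I choose a \(g\)-fixed linear form. Since \(g\) is unipotent (\Cref{prop:unip}), the induced action on \(V^\vee\) is also unipotent. So \(\Ker(\Delta|_{V^\vee})\neq0\), and I pick \(0\neq z\in(V^\vee)^g\). Its image \(\bar z\in R_1\) is nonzero, because \((H)\) contains no nonzero linear forms (\(\deg H=e\ge2\)).

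Then I show that multiplication by \(\bar z\) gives a map \(\mu\colon R_t\to R_{t+1}\) that commutes with \(\Delta\). From \(g(\bar z f)=g(\bar z)\,g(f)=\bar z\,g(f)\) we get \(\Delta(\bar z f)=\bar z\,\Delta(f)\). The map \(\mu\) is injective because \(R\) is a domain and \(\bar z\neq0\). Therefore \(\mu\) maps \(\Img(\Delta|_{R_t})\) injectively into \(\Img(\Delta|_{R_{t+1}})\), which gives \(\Omega_t(R)\le\Omega_{t+1}(R)\).

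This argument works for every \(t\ge0\), so the restriction \(t\ge e\) in the statement is not needed for this approach. The only substantive point is the injectivity of \(\mu\). That is exactly where irreducibility of \(H\) (primality of \((H)\)) and \(e\ge2\) (so that \(\bar z\neq0\)) enter, and I expect it to be the one step that needs care. Everything else is formal.
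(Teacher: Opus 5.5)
Your proposal is correct and matches the paper's proof: you pick a nonzero $g$-fixed linear form $z$, note that its image in $R_1$ is nonzero since $e\ge2$, and use that multiplication by $z$ on the domain $R$ is injective and commutes with $\Delta$. Your extra checks (that $\Delta$ descends to $R$ and that $(H)$ is prime) are fine, and you are right that the argument gives the inequality for every $t\ge0$.
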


\begin{proof}
Since \(R=S/(H)\) is a domain, it is enough to find a nonzero invariant linear form in
\(R_1\). Choose \(0\neq z\in (V^\vee)^g\). Since \(e\ge2\), the image of \(z\) in
\(R_1\) is nonzero. Hence multiplication by \(z\) gives an injective map
\(R_t\hookrightarrow R_{t+1}\). Because \(z\) is \(g\)-invariant, multiplication by
\(z\) commutes with \(\Delta\). Therefore it sends \(\operatorname{Im}(\Delta|_{R_t})\)
injectively into $\operatorname{Im}(\Delta|_{R_{t+1}})$. Thus \(\Omega_t(R)\le \Omega_{t+1}(R)\). \end{proof}

\begin{lemma}\label{lem:qcrp}
Let \(H\in S_e\) be a nonzero \(g\)-invariant form and set \(R=S/(H)\). Then
\begin{equation}\label{eq:qcre}
 \Omega_e(R)\ge \Omega_e(V)-1.
\end{equation}
If \(e=2\), then
\begin{equation}\label{eq:qcr2}
 \Omega_3(R)\ge \Omega_3(V)-\dim V.
\end{equation}
\end{lemma}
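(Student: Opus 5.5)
The plan is to use the short exact sequence of \(k[g]\)-modules
\[
0\longrightarrow H\cdot S_{t-e}\longrightarrow S_t\longrightarrow R_t\longrightarrow 0,
\]
where \(H\cdot S_{t-e}\) is \(g\)-stable because \(H\) is \(g\)-invariant. The key general fact is this: for a \(\Delta\)-stable subspace \(W\subset M\), the image of \(\Delta\) on \(M/W\) is \((\Img\Delta_M+W)/W\). Its dimension is therefore at least
\[
\dim\Img\Delta_M-\dim(\Img\Delta_M\cap W)\ \ge\ \Omega(M)-\dim\Img(\Delta_M)\cap W .
\]
The crude bound \(\dim(\Img\Delta_M\cap W)\le\dim W\) gives \(\Omega(M/W)\ge\Omega(M)-\dim W\). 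So both inequalities will follow once we bound the dimension of the intersection of \(\Img\Delta\) with the ideal piece.

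For \eqref{eq:qcre}, take \(t=e\). Then \(W=\langle H\rangle\) is one-dimensional, and it is \(g\)-stable (indeed fixed). The crude bound gives \(\Omega_e(R)\ge\Omega_e(V)-1\) directly.

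For \eqref{eq:qcr2}, take \(e=2\) and \(t=3\). Then \(W=H\cdot S_1\), which has dimension at most \(\dim V\), with equality since \(S\) is a domain and \(H\neq0\). The crude bound gives \(\Omega_3(R)\ge\Omega_3(V)-\dim V\). We only need that multiplication by the invariant form \(H\) commutes with \(\Delta\), so that \(HS_1\) is \(\Delta\)-stable. This holds because \(g(HF)=H\,g(F)\), hence \(\Delta(HF)=H\Delta(F)\).

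No step looks like a genuine obstacle. The only points needing care are checking that the quotient map intertwines the operators \(\Delta\), which holds since the action on \(R\) is induced, and checking surjectivity of \(\Img\Delta_{S_t}\to\Img\Delta_{R_t}\). Both are formal. If a sharper bound were wanted, one could instead use \(\dim(\Img\Delta\cap HS_1)=\dim H\cdot\Img(\Delta|_{S_1})=\Omega_1(V)\), which gives \(\Omega_3(R)\ge\Omega_3(V)-\Omega_1(V)\). The stated form with \(\dim V\) suffices and is what I would write.
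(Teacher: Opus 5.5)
Your argument is correct and is essentially the paper's: quotienting by the $\Delta$-stable subspace $\langle H\rangle$ (resp.\ $H\cdot S_1$, of dimension $\dim V$) lowers the image rank by at most that subspace's dimension, because $\Img(\Delta|_{R_t})\simeq \Img(\Delta|_{S_t})/\bigl(\Img(\Delta|_{S_t})\cap HS_{t-e}\bigr)$. Drop the closing ``sharper bound'' aside, though, since it is false: in general only the inclusion $H\cdot\Img(\Delta|_{S_1})\subset\Img(\Delta|_{S_3})\cap HS_1$ holds, and for $p=3$ with $\Delta x=y$, $\Delta y=0$ and $H=y^2$ one gets $\Img(\Delta|_{S_3})=HS_1$, hence $\Omega_3(R)=0<1=\Omega_3(V)-\Omega_1(V)$.
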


\begin{proof}
The quotient map \(S_e\to R_e\) kills the one-dimensional \(g\)-stable space \(\langle
H\rangle\), so the image rank drops by at most \(1\). If \(e=2\), then \(R_3=S_3/(H\cdot
V^\vee)\), and \(\dim(H\cdot V^\vee)=\dim V\). Thus the image rank in degree \(3\) drops
by at most \(\dim V\).
\end{proof}

For \(1\le a\le p\), write \(J_a\) for the indecomposable \(k[g]\)-module of dimension
\(a\), equivalently one Jordan block of length \(a\) for \(\Delta=g-\id\). Thus
\(J_a\simeq k[s]/(s^a)\), with \(g\) acting by multiplication by \(1+s\).

\begin{lemma}\label{lem:tensp}
For finite-dimensional \(k[g]\)-modules \(A\) and \(B\),
\begin{equation}\label{eq:tenslb}
 \Omega(A\otimes B)\ge \Omega(A)\dim B.
\end{equation}
More precisely, if \(A\simeq\bigoplus_iJ_{a_i}\) and \(B\simeq\bigoplus_jJ_{b_j}\), then
\begin{equation}\label{eq:tens}
 \Omega(A\otimes B)=\dim A\dim B-
 \sum_{i,j}\min(a_i,b_j).
\end{equation}
\end{lemma}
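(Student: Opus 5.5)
The plan is to compute $\Omega$ through the kernel. Since $\Omega(M)=\dim M-\dim\Ker(\Delta_M)$ and $\Ker(\Delta_M)=M^g$ is the space of $g$-invariants, \eqref{eq:tens} is equivalent to the statement
\[
\dim (A\otimes B)^g=\sum_{i,j}\min(a_i,b_j).
\]
Here $g$ acts diagonally on $A\otimes B$, so $\Delta_{A\otimes B}=g\otimes g-\id$. Both sides are additive in direct sums of $A$ and of $B$: taking invariants commutes with direct sums, and tensor product distributes over them. So it suffices to treat the case of two single blocks, $A=J_a$ and $B=J_b$.

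For two blocks I would use the canonical identification $A\otimes B\simeq \Hom_k(A^\vee,B)$. Under it the diagonal $g$-action on $A\otimes B$ becomes conjugation, $\phi\mapsto g\phi g^{-1}$. Hence
\[
(A\otimes B)^g=\Hom_{k[g]}(A^\vee,B).
\]
Next, $J_a^\vee\simeq J_a$. Indeed, $g$ acts on $J_a^\vee$ by $(g^{-1})^{T}$. This is unipotent, and $(g^{-1})^T-\id$ has the same rank as $g^{-1}-\id=-g^{-1}(g-\id)$, namely $a-1$. So the dual module is again a single Jordan block of length $a$. Finally, $k[g]\simeq k[s]/(s^p)$ with $s=g-1$, and $J_a=k[s]/(s^a)$. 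A module map $k[s]/(s^a)\to k[s]/(s^b)$ is determined by the image $y$ of $1$, subject only to $s^a y=0$. The annihilator of $s^a$ in $k[s]/(s^b)$ has dimension $\min(a,b)$. This gives $\dim(J_a\otimes J_b)^g=\min(a,b)$, which proves \eqref{eq:tens}.

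For the inequality \eqref{eq:tenslb}, I would use $\Omega(A)=\sum_i(a_i-1)$, so that
\[
\dim A-\Omega(A)=\#\{i\}.
\]
Then
\[
\dim A\dim B-\Omega(A)\dim B=\#\{i\}\cdot\sum_j b_j=\sum_{i,j}b_j\ \ge\ \sum_{i,j}\min(a_i,b_j).
\]
Combined with \eqref{eq:tens}, this yields $\Omega(A\otimes B)\ge\Omega(A)\dim B$.

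The only step needing care is the identification of the diagonal invariants with $\Hom_{k[g]}(A^\vee,B)$, together with the self-duality of $J_a$. Everything else is the elementary computation of $\Hom$ between cyclic $k[s]/(s^p)$-modules and bookkeeping of sums. This avoids needing the full (char-$p$ sensitive) decomposition of $J_a\otimes J_b$. Only its number of indecomposable summands, equal to the dimension of the invariants, enters the argument.
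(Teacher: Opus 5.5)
Your proof is correct. The overall structure matches the paper's: reduce to a pair of single blocks, show the correction term is $\min(a,b)$, then do the same bookkeeping for \eqref{eq:tenslb}. The core single-block computation, however, is dual to the paper's.

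The paper computes the cokernel rather than the kernel. It models $J_a\otimes J_b$ as $k[s,t]/(s^a,t^b)$, on which $\Delta$ is multiplication by $s+t+st$. Since $1+t$ is a unit, the relation $s+t+st=0$ eliminates $s=-t/(1+t)$. This gives $\operatorname{coker}\Delta\simeq k[t]/(t^a,t^b)$, of length $\min(a,b)$.

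You instead compute $\Ker\Delta=(J_a\otimes J_b)^g\simeq\Hom_{k[g]}(J_a^\vee,J_b)$. This uses the equivariant tensor--Hom identification, the self-duality $J_a^\vee\simeq J_a$, and the count of module maps between cyclic $k[s]/(s^p)$-modules. By rank--nullity the two numbers coincide.

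Your version is coordinate-free and explains where $\min(a,b)$ comes from: it is the number of indecomposable summands of $J_a\otimes J_b$. The price is that you must check that the tensor--Hom identification turns the diagonal action into conjugation, and that dualizing preserves the Jordan type.

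The paper's version is a one-line ring computation. It also has a downstream benefit: the explicit cokernel $k[t]/(t^\lambda)$ is reused in \Cref{lem:blk}. There the swap $s\leftrightarrow t$ acts on it, and taking invariants yields $\Omega_2(J_\lambda)$.
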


\begin{proof}
It is enough to treat one pair of indecomposable blocks. For \(J_a\otimes J_b\), use the
model \(J_a\otimes J_b\simeq k[s,t]/(s^a,t^b)\), where \(g\) acts by multiplication by
\((1+s)(1+t)\). Therefore the actual operator \(\Delta=g-\id\) acts by multiplication by
\((1+s)(1+t)-1=s+t+st\). The cokernel of this multiplication map has length
\(
\ell\bigl(k[s,t]/(s^a,t^b,s+t+st)\bigr).
\)
Since \(1+t\) is a unit, the relation \(s+t+st=0\) is equivalent to
\(s=-\frac{t}{1+t}\). Hence 
\(
k[s,t]/(s^a,t^b,s+t+st) \simeq k[t]/(t^a,t^b),
\)
which has
length \(\min(a,b)\). Thus \(\Omega(J_a\otimes J_b)=ab-\min(a,b)\). Summing over all
pairs of Jordan blocks gives \eqref{eq:tens}. Finally, for each \(i\),
\(
\sum_j\min(a_i,b_j)\le \sum_j b_j=\dim B.
\)
It follows that
\[
\begin{aligned}
\Omega(A\otimes B)
&=
\sum_i\left(a_i\dim B-\sum_j\min(a_i,b_j)\right) \\
&\ge
\sum_i(a_i-1)\dim B =
\Omega(A)\dim B.
\end{aligned}
\]
\end{proof}

\begin{lemma}\label{lem:blk}
For every Jordan block \(J_\lambda\) with \(2\le\lambda\le p\), one has
\[
\begin{aligned}
\Omega_2(J_\lambda)&=\left\lfloor\frac{\lambda^2}{2}\right\rfloor, \\
\Omega_3(J_\lambda)&\ge \frac{(\lambda-1)(\lambda+2)}2.
\end{aligned}
\]
\end{lemma}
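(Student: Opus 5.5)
We treat \(\Omega_2\) and \(\Omega_3\) separately. Throughout we use the dictionary \(\Omega(M)=\dim M-\dim\Ker\Delta_M\), and the fact that \(\dim\Ker\Delta_M\) equals the number of Jordan blocks of \(M\). We also use that \(J_\lambda^\vee\simeq J_\lambda\): the contragredient of a single unipotent block is again a single block, because \(g^{-1}-\id=-(g-\id)g^{-1}\) is nilpotent of the same rank as \(\Delta\).

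\emph{The equality for \(\Omega_2\).} It suffices to show
\[
\dim\bigl(\Sym^2J_\lambda\bigr)^g=\lceil\lambda/2\rceil .
\]
Then \(\Omega_2(J_\lambda)=\binom{\lambda+1}{2}-\lceil\lambda/2\rceil=\lfloor\lambda^2/2\rfloor\), which one checks separately for \(\lambda\) even and \(\lambda\) odd.

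Since \(p\) is odd, \(J_\lambda\otimes J_\lambda=\Sym^2J_\lambda\oplus\Lambda^2J_\lambda\) as \(k[g]\)-modules. Hence \((\Sym^2J_\lambda)^g\) is the \(+1\)-eigenspace of the swap \(\tau\) acting on \((J_\lambda\otimes J_\lambda)^g\). The formula \eqref{eq:tens} of \cref{lem:tensp} gives
\[
\dim(J_\lambda\otimes J_\lambda)^g=\lambda .
\]
Concretely, \((J_\lambda\otimes J_\lambda)^g\) is the space of \(g\)-invariant bilinear forms on \(J_\lambda^\vee\). Fix one nondegenerate invariant form \(B_0\). Then the invariant forms are \(B_i(x,y):=B_0(N^ix,y)\) for \(0\le i<\lambda\), because \(\operatorname{End}_g(J_\lambda)=k[N]\).

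Next we compute the adjoint of \(N\) with respect to \(B_0\). It is
\[
N^*=g^{-1}-\id=-N(1+N)^{-1}.
\]
So \(B_i\) transposes to \((-1)^i\varepsilon B_i\) modulo \(\Span(B_{i+1},\dots,B_{\lambda-1})\), where \(\varepsilon=(-1)^{\lambda-1}\) is the symmetry sign of \(B_0\). We work with respect to the filtration by \(i\) and use that \(2\) is invertible. Then the \(+1\)-eigenspace of \(\tau\) has dimension equal to the number of \(i\in[0,\lambda-1]\) with \((-1)^{i+\lambda-1}=1\), namely \(\lceil\lambda/2\rceil\).

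This sign and filtration bookkeeping is the step I expect to need the most care. In particular one must check that the triangular correction terms do not change eigenspace dimensions. They do not, because \(\tau\) is an involution and \(p\neq2\), so \(\tau\) is diagonalizable with eigenvalue multiplicities read off from the associated graded.

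\emph{The inequality for \(\Omega_3\).} We use the elementary fact that for a \(g\)-stable subspace \(W\subset M\),
\[
\Omega(M)\ge\Omega(W)+\Omega(M/W).
\]
The reason is that \(\Img\Delta_M\supset\Img\Delta_W\), and \(\Img\Delta_M\) surjects onto \(\Img\Delta_{M/W}\).

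Take \(M=\Sym^3(J_\lambda)\) and let \(z\) span the invariant line of \(J_\lambda\). Set \(W:=z\cdot\Sym^2(J_\lambda)\). Then \(W\simeq\Sym^2J_\lambda\) as \(k[g]\)-modules, since multiplication by \(z\) commutes with \(\Delta\). The quotient is \(M/W\simeq\Sym^3(J_\lambda/kz)=\Sym^3(J_{\lambda-1})\). Therefore
\[
\Omega_3(J_\lambda)\ge\lfloor\lambda^2/2\rfloor+\Omega_3(J_{\lambda-1}).
\]

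We then induct on \(\lambda\). The base case is \(\lambda=2\): \(\Omega_3(J_2)\ge\Omega_2(J_2)=2=(1\cdot4)/2\), with \(\Omega_3(J_1)=0\). For the inductive step, the target satisfies
\[
\frac{(\lambda-1)(\lambda+2)}2-\frac{(\lambda-2)(\lambda+1)}2=\lambda\le\lfloor\lambda^2/2\rfloor
\qquad(\lambda\ge2),
\]
which completes the induction.
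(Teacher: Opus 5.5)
Your argument is correct, and it takes a genuinely different route from the paper in both halves.

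\textbf{Degree 2.} The paper computes the cokernel rather than the kernel. It models \(J_\lambda\otimes J_\lambda\) as \(k[s,t]/(s^\lambda,t^\lambda)\), with \(\Delta\) acting by \(u=s+t+st\). It then identifies \(\operatorname{coker}(\Delta|_{\Sym^2J_\lambda})\) with the swap-invariants of \(C=k[s,t]/(s^\lambda,t^\lambda,u)\simeq k[t]/(t^\lambda)\). Finally it linearizes the induced involution \(t\mapsto -t/(1+t)\) by the explicit parameter \(y=2t/(2+t)\), so the invariants are spanned by \(1,y^2,y^4,\dots\). Your invariant-bilinear-form computation is the dual, kernel-side version of this. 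Your triangularity-plus-diagonalizability argument plays the role of the paper's explicit change of variable.

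\textbf{Degree 3.} The paper exhibits \(U=e_0\Sym^2W\oplus e_0^2W\) and checks by hand that \(\Delta|_U\) is injective. You instead use the filtration \(z\Sym^2J_\lambda\subset\Sym^3J_\lambda\) with quotient \(\Sym^3J_{\lambda-1}\), the inequality \(\Omega(M)\ge\Omega(W)+\Omega(M/W)\), and induction. This is shorter and even yields \(\Omega_3(J_\lambda)\ge\sum_{j=2}^{\lambda}\lfloor j^2/2\rfloor\). The trade-off is that it depends on the exact degree-\(2\) value, whereas the paper's degree-\(3\) bound is self-contained.

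\textbf{The sign \(\varepsilon\).} The one step you leave unproved is \(\varepsilon=(-1)^{\lambda-1}\). For odd \(\lambda\) this sign is exactly what decides between \(\lceil\lambda/2\rceil\) and \(\lfloor\lambda/2\rfloor\), so it needs an argument. Also, an arbitrary nondegenerate invariant \(B_0\) need not be exactly \(\pm\)-symmetric. You should therefore define \(\varepsilon\) by \(B_0^t\equiv\varepsilon B_0\) modulo \(\Span(B_1,\dots,B_{\lambda-1})\), which is all your count uses.

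Here is a short proof of the sign.
\begin{enumerate}
\item Your own triangular formula, at the last step, gives \(B_{\lambda-1}^t=(-1)^{\lambda-1}\varepsilon B_{\lambda-1}\) with no correction terms.
\item Write \(N^{\lambda-1}x=\phi(x)z'\), where \(z'\) spans \(\Ker N=\Img N^{\lambda-1}\), and set \(\psi:=B_0(z',\cdot)\). Then \(B_{\lambda-1}(x,y)=B_0(N^{\lambda-1}x,y)=\phi(x)\psi(y)\), which is nonzero.
\item Both \(\phi\) and \(\psi\) are nonzero and vanish on the hyperplane \(\Img N\). For \(\psi\), use \(B_0(z',gy)=B_0(g^{-1}z',y)=B_0(z',y)\).
\item Hence \(\phi\) and \(\psi\) are proportional, so \(B_{\lambda-1}\) is a nonzero multiple of \(\psi\otimes\psi\), i.e.\ of \(z\otimes z\), and is symmetric.
\item Comparing with the first step forces \(\varepsilon=(-1)^{\lambda-1}\).
\end{enumerate}
This is the kernel-side counterpart of the paper's observation that \(1\in C\) is swap-invariant. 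Alternatively, you could cite the classical fact that, in odd characteristic, a single unipotent block of size \(\lambda\) preserves a nondegenerate form of sign \((-1)^{\lambda-1}\).
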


\begin{proof}
For degree \(2\), consider
\(
J_\lambda\otimes J_\lambda\simeq k[s,t]/(s^\lambda,t^\lambda).
\)
The operator \(\Delta\) is multiplication by \(u:=s+t+st\). Since \(p\ge3\), the
decomposition
\[
J_\lambda\otimes J_\lambda=\Sym^2J_\lambda\oplus\bigwedge^2J_\lambda
\]
is the decomposition into the \((+1)\)- and \((-1)\)-eigenspaces of the swap involution
\(s\leftrightarrow t\). The element \(u\) is fixed by this involution, so the cokernel
of \(\Delta\) on \(\Sym^2J_\lambda\) is the \((+1)\)-eigenspace of
\(C:=k[s,t]/(s^\lambda,t^\lambda,u)\). As above,
\(
C\simeq k[t]/(t^\lambda),
\
s=-\frac{t}{1+t}.
\)
Under this identification, the swap involution sends \(t\longmapsto -\frac{t}{1+t}\).
Set \(y:=\frac{2t}{2+t}\). This is a change of parameter in \(k[t]/(t^\lambda)\), and
the involution sends \(y\) to \(-y\). Hence the invariant subspace of \(C\) has basis
\(1,y^2,y^4,\dots\), and therefore has dimension \(\lceil\lambda/2\rceil\). It follows
that
\[
\dim\operatorname{coker}\bigl(\Delta|_{\Sym^2J_\lambda}\bigr)
=\left\lceil\frac{\lambda}{2}\right\rceil.
\]
Thus
\[
\Omega_2(J_\lambda)
=
\dim\Sym^2J_\lambda-\left\lceil\frac{\lambda}{2}\right\rceil
=
\left\lfloor\frac{\lambda^2}{2}\right\rfloor.
\]

For degree \(3\), choose a basis \(e_0,e_1,\dots,e_{\lambda-1}\) of \(J_\lambda\) such
that
\[
g(e_i)=e_i+e_{i+1}\quad(0\le i\le\lambda-2),
\qquad
 g(e_{\lambda-1})=e_{\lambda-1}.
\]
Put \(e_\lambda:=0\), and set \(W:=\Span(e_1,\dots,e_{\lambda-1})\). Then \(W\) is
\(g\)-stable. Consider the subspace \(U:=e_0\Sym^2W\oplus e_0^2W\subset
\Sym^3J_\lambda\). We claim that \(\Delta|_U\) is injective. Let
\(
u=e_0f+e_0^2h,
\ \\
f\in\Sym^2W,
\
h\in W,
\)
and suppose \(\Delta(u)=0\), equivalently \(g(u)=u\). Since \(g(e_0)=e_0+e_1\), we have
\(g(u)=(e_0+e_1)g(f)+(e_0+e_1)^2g(h)\). Comparing the powers of \(e_0\) in the
decomposition
\[
\Sym^3J_\lambda
=
\Sym^3W\oplus e_0\Sym^2W\oplus e_0^2W\oplus ke_0^3
\]
gives
\begin{align*}
 g(h)&=h,\\
 g(f)+2e_1h&=f,\\
 e_1g(f)+e_1^2h&=0.
\end{align*}
The last equality is \(e_1\bigl(g(f)+e_1h\bigr)=0\). Since \(\Sym W\) is a polynomial
ring and multiplication by \(e_1\) is injective, we get \(g(f)+e_1h=0\). Together with
\(g(f)+2e_1h=f\), this gives \(f=e_1h\). Substituting back, and using \(g(h)=h\), we
obtain
\[
0=g(f)+e_1h=g(e_1h)+e_1h=(e_1+e_2)h+e_1h=(2e_1+e_2)h.
\]
The linear form \(2e_1+e_2\) is nonzero because \(p\ge3\), and multiplication by a
nonzero linear form in \(\Sym W\) is injective. Hence \(h=0\), and then \(f=0\).
Therefore \(\Delta|_U\) is injective.

Consequently
\[
\begin{aligned}
\Omega_3(J_\lambda)
&\ge \dim U \\
&=\dim\Sym^2W+\dim W \\
&=\binom{\lambda}{2}+\lambda-1 \\
&=\frac{(\lambda-1)(\lambda+2)}2.
\end{aligned}
\]
\end{proof}

\begin{theorem}\label{thm:opunif}
Let \(W\) be a finite-dimensional nontrivial order-\(p\) unipotent \(k[g]\)-module, with
\(m:=\dim W\). Write \(\Delta_W:=g-\id_W\). Thus \(\Delta_W\neq0\) and \(\Delta_W^p=0\).
Let \(D:=\dim\Cl_{\GL(W)}(\Delta_W)\) be the dimension of the \(\GL(W)\)-conjugacy class
of the nilpotent operator \(\Delta_W\).

For \(d\ge1\), set
\[
\Omega_d(W):=\dim\operatorname{Im}\bigl(\Delta:\Sym^d W\to \Sym^d W\bigr),
\]
where \(\Delta=g-\id\) is the actual operator induced on \(\Sym^dW\). Then
\begin{align}
 \Omega_2(W)&\ge m, \label{eq:I2}\\
 \Omega_3(W)&\ge 2m-3, \label{eq:I3}\\
 2\Omega_2(W)-D&\ge 2, \label{eq:2q}\\
 2\Omega_3(W)-D&\ge 2. \notag
\end{align}
Moreover, if \(m\ge4\), then
\begin{align}
 2\Omega_3(W)-D&\ge3, \label{eq:2cs}\\
 \Omega_2(W)+\Omega_3(W)-m&>D. \label{eq:T}
\end{align}
\end{theorem}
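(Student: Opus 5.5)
The plan is to reduce everything to the Jordan type of \(\Delta_W\) and then compute or bound each quantity block by block. Write \(W\simeq\bigoplus_{i=1}^kJ_{\lambda_i}\), where \(\lambda_1\ge\cdots\ge\lambda_k\), \(\sum_i\lambda_i=m\), every \(\lambda_i\le p\), and \(\lambda_1\ge2\) because \(\Delta_W\neq0\). The centralizer of a nilpotent of this type has dimension \(\sum_{i,j}\min(\lambda_i,\lambda_j)\). Hence
\[
D=m^2-\sum_{i,j}\min(\lambda_i,\lambda_j)=\sum_{i\neq j}\bigl(\lambda_i\lambda_j-\min(\lambda_i,\lambda_j)\bigr)+\sum_i(\lambda_i^2-\lambda_i).
\]
Since \(\Omega\) is additive over direct sums of \(k[g]\)-modules, I will decompose
\[
\Sym^2W=\bigoplus_i\Sym^2J_{\lambda_i}\oplus\bigoplus_{i<j}J_{\lambda_i}\otimes J_{\lambda_j},
\]
and similarly \(\Sym^3W\) into the pieces \(\Sym^3J_{\lambda_i}\), \(\Sym^2J_{\lambda_i}\otimes J_{\lambda_j}\) and \(J_{\lambda_i}\otimes J_{\lambda_j}\otimes J_{\lambda_l}\).

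In degree \(2\) everything is exact. Combining \Cref{lem:blk} for the diagonal pieces with \eqref{eq:tens} for the cross pieces gives
\[
2\Omega_2(W)=\sum_i 2\lfloor\lambda_i^2/2\rfloor+\sum_{i\neq j}\bigl(\lambda_i\lambda_j-\min(\lambda_i,\lambda_j)\bigr).
\]
Subtracting the formula for \(D\), the cross terms cancel, and
\[
2\Omega_2(W)-D=\sum_i\bigl(\lambda_i-[\lambda_i\text{ odd}]\bigr).
\]
Each block of size \(1\) contributes \(0\), and each block of size at least \(2\) contributes at least \(2\). This proves \eqref{eq:2q}. For \eqref{eq:I2}, I will bound the cross terms below by \(\lambda_i\lambda_j-\min\ge\max(\lambda_i,\lambda_j)-1\) and keep only the terms involving \(\lambda_1\). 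This gives \(\Omega_2(W)\ge\lfloor\lambda_1^2/2\rfloor+(m-\lambda_1)(\lambda_1-1)\), which is at least \(m\) since \(\lambda_1\ge2\). Equality holds for type \((2,1^{m-2})\).

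In degree \(3\) I only need lower bounds. I will argue by induction on the number of blocks, writing \(W=W'\oplus J_\lambda\) with \(m'=\dim W'\). Then
\[
\Omega_3(W)=\Omega_3(W')+\Omega_3(J_\lambda)+\Omega(\Sym^2W'\otimes J_\lambda)+\Omega(W'\otimes\Sym^2J_\lambda).
\]
The terms will be bounded as follows.
\begin{itemize}
\item For \(\Omega_3(J_\lambda)\) I use \Cref{lem:blk}.
\item For the two mixed terms I use \Cref{lem:tensp} in the form \(\Omega(A\otimes B)\ge\max\bigl(\Omega(A)\dim B,\ \Omega(B)\dim A\bigr)\), together with the degree-\(2\) formula just proved. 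When a cross term involves a trivial block, the sharper exact formula \eqref{eq:tens} gives \(\dim A\cdot\lambda-\sum\min\ge\dim A(\lambda-1)\).
\end{itemize}
The increment of \(D\) is explicit: it equals \(2\lambda m'-2\sum_i\min(\lambda,\lambda_i)+\lambda^2-\lambda\). So the inductive step is to check that \(2\cdot(\text{increment of }\Omega_3)\) dominates the increment of \(D\), and that the increment of \(\Omega_3\) is at least \(2\lambda\). These two checks give \(2\Omega_3-D\ge2\) and \eqref{eq:I3} respectively.

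The base cases are a single block \(J_\lambda\) together with trivial summands \(J_1^{\oplus(m-\lambda)}\), and they must be handled by direct computation. For a single block, \(D=\lambda^2-\lambda\). With the bound \(\Omega_3(J_\lambda)\ge(\lambda-1)(\lambda+2)/2\) from \Cref{lem:blk}, this gives \(2\Omega_3-D\ge2\lambda-2\ge2\). Adding \(t=m-\lambda\) trivial blocks raises \(D\) by \(2t(\lambda-1)\). It raises \(\Omega_3\) by at least \(t\,\Omega_2(J_\lambda)+\binom{t+1}{2}(\lambda-1)\), coming from \(\Sym^2J_\lambda\otimes J_1\) and \(J_\lambda\otimes\Sym^2(k^t)\). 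This increase already outpaces \(D\).

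The main obstacle is the \(m\ge4\) refinements \eqref{eq:2cs} and \eqref{eq:T}. These are tight for the smallest classes, namely types \((2,1^{m-2})\), \((2,2)\) and \((3,1)\), where \(D=2m-2\), \(4m-8\) and \(6\) respectively. There the crude tensor bound loses too much. I would first verify these few extremal Jordan types by hand, using the explicit basis from \Cref{lem:blk}: for \((2,1^{m-2})\) one gets \(\Omega_2=m\) and \(\Omega_3\) of order \(\binom{m}{2}\), so \eqref{eq:T} reads roughly \(m+\binom m2-m>2m-2\), which holds for \(m\ge4\). Then I would show that in all other types the inductive increments carry enough slack. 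To get \eqref{eq:T} I would combine it with the exact \(2\Omega_2-D\) formula, writing
\[
\Omega_2+\Omega_3-m-D=\tfrac12(2\Omega_2-D)+\tfrac12(2\Omega_3-D)+(\Omega_3-\Omega_2)-m,
\]
so that it suffices to bound \(\Omega_3-\Omega_2\) below by roughly \(m-2\). That bound should follow from the multiplication-by-invariant-linear-form injection used in \Cref{lem:ambmonp}, upgraded by counting the extra image coming from the non-invariant directions.
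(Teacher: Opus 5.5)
Your degree-$2$ analysis is correct and sharper than the paper's block-peeling inequalities. The exact identity $2\Omega_2(W)-D=\sum_i(\lambda_i-[\lambda_i\text{ odd}])$ gives \eqref{eq:2q} at once. Your degree-$3$ induction does give \eqref{eq:I3} and $2\Omega_3-D\ge2$: the $D$-increment is $2\Omega(J_\lambda\otimes W')+\lambda^2-\lambda$, and $\Omega(\Sym^2J_\lambda\otimes W')\ge\lambda m'\ge\Omega(J_\lambda\otimes W')$ controls it whichever block you add. (A small slip: the bound $\lambda_i\lambda_j-\min\ge\max-1$ does not yield your displayed bound for \eqref{eq:I2}; for type $(2,2,2)$ it gives only $4<6$. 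The exact cross term $\lambda_j(\lambda_1-1)$ does give it.)

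The genuine gap is the $m\ge4$ part, \eqref{eq:2cs} and \eqref{eq:T}, which you flag as the crux but do not prove. Your displayed identity is false. Its right-hand side equals $2\Omega_3-D-m$, which exceeds $\Omega_2+\Omega_3-m-D$ by $\Omega_3-\Omega_2$. The correct form is $\Omega_2+\Omega_3-m-D=(2\Omega_2-D)+(\Omega_3-\Omega_2)-m$. Using only $2\Omega_2-D\ge2$, you would then need $\Omega_3-\Omega_2\ge m-1$. For this you offer only an unspecified ``upgrade'' of the $z$-multiplication injection, and the available single-block input cannot deliver it: for $W=J_4$, \Cref{lem:blk} gives only $\Omega_3-\Omega_2\ge 9-8=1<3$. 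The extremal cases are also misdiagnosed. For type $(3,1)$ one has $D=10$; the number $6$ is the centralizer dimension. For $(2,1^{m-2})$ the crude bounds already give $\Omega_2+\Omega_3-m-D\ge\binom{m-1}{2}$ and $2\Omega_3-D\ge 2m-2+(m-1)(m-2)$. So nothing is tight there, and the crude tensor bounds do not ``lose too much''.

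The missing step is simply to keep your exact degree-$2$ formula. Since $\sum_i\lambda_i=m$, it gives $\Omega_2+\Omega_3-m-D=\Omega_3-\Omega_2-\#\{i:\lambda_i\text{ odd}\}$. Thus \eqref{eq:T} is equivalent to $\Omega_3-\Omega_2>\#\{i:\lambda_i\text{ odd}\}$. Write $W=J_\lambda\oplus W'$ with $\lambda$ maximal, and use $\Omega_3(W')\ge\Omega_2(W')$ (multiplication by a nonzero invariant linear form), \eqref{eq:tenslb}, and \Cref{lem:blk}. One gets
\[
\Omega_3(W)-\Omega_2(W)\ge\Bigl(\tfrac{(\lambda-1)(\lambda+2)}{2}-\bigl\lfloor\tfrac{\lambda^2}{2}\bigr\rfloor\Bigr)+m'+(\lambda-1)\tbinom{m'+1}{2}.
\]
The right-hand side strictly exceeds $[\lambda\text{ odd}]+m'\ge\#\{i:\lambda_i\text{ odd}\}$ as soon as $m'\ge1$ or $\lambda\ge4$, i.e.\ whenever $m\ge4$. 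Then \eqref{eq:2cs} follows from $2\Omega_3-D=2(\Omega_3-\Omega_2)+(2\Omega_2-D)\ge2+2$. The paper reaches the same conclusion with different bookkeeping. It inducts on $\Theta(M)=\Omega_2+\Omega_3-\dim M-D(M)\ge-\tau(M)$, peeling off a largest block, and the cross term $\ge m'$ absorbs the increase of $\dim M$.
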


\begin{proof}
For any finite-dimensional order-\(p\) unipotent \(k[g]\)-module \(M\), write
\[
D(M):=\dim\Cl_{\GL(M)}(\Delta_M).
\]
Thus, for the module \(W\) in the theorem, one has \(D(W)=D\).

Let \(J_\lambda\) be a largest nontrivial Jordan block of \(W\), where \(\lambda\ge2\),
and write\\
\(
W=J_\lambda\oplus W',
\
m':=\dim W'.
\)
Here \(W'\) is the direct sum of all remaining Jordan blocks, including possible trivial
blocks.

We first prove the lower bounds for \(\Omega_2(W)\) and \(\Omega_3(W)\). Since
\[
\Sym^2W
=
\Sym^2J_\lambda\oplus(J_\lambda\otimes W')\oplus\Sym^2W',
\]
and all summands are \(g\)-stable, image dimensions add over these summands. Therefore,
using \eqref{eq:tenslb} and \Cref{lem:blk},
\[
 \Omega_2(W)
 \ge
 \Omega_2(J_\lambda)+\Omega(J_\lambda\otimes W')
 \ge
 \lambda+(\lambda-1)m'
 \ge
 \lambda+m'
 =
 m.
\]
Here \(\Omega(J_\lambda)=\lambda-1\), and \(\lambda\ge2\).

Similarly,
\[
\Sym^3W
=
\Sym^3J_\lambda
\oplus
(\Sym^2J_\lambda\otimes W')
\oplus
(J_\lambda\otimes \Sym^2W')
\oplus
\Sym^3W'.
\]
Hence
\[
 \Omega_3(W)
 \ge
 \Omega_3(J_\lambda)+\Omega(\Sym^2J_\lambda\otimes W').
\]
By \eqref{eq:tenslb} and \Cref{lem:blk},
\[
\Omega_3(J_\lambda)\ge \frac{(\lambda-1)(\lambda+2)}2,
\qquad
\Omega(\Sym^2J_\lambda\otimes W')\ge \Omega_2(J_\lambda)m'\ge \lambda m'.
\]
Thus
\[
 \Omega_3(W)
 \ge
 \frac{(\lambda-1)(\lambda+2)}2+\lambda m'.
\]
The last expression is at least \(2(\lambda+m')-3\), because
\[
\frac{(\lambda-1)(\lambda+2)}2+\lambda m'-\bigl(2(\lambda+m')-3\bigr)
=
\frac{\lambda^2-3\lambda+4}{2}+(\lambda-2)m'
\ge0
\]
for \(\lambda\ge2\). Hence
\(
 \Omega_3(W)
 \ge
 2(\lambda+m')-3
 =
 2m-3.
\)

We next prove the orbit-dimension estimates. The usual centralizer formula for nilpotent
Jordan modules gives
\[
D(W)=D(J_\lambda)+D(W')+2(\lambda-1)m'.
\]
Indeed, since \(\lambda\) is maximal among the block lengths appearing in \(W\), for
every Jordan block \(J_b\subset W'\) one has
\[
\dim\Hom_{k[\Delta]}(J_\lambda,J_b)=b,
\qquad
\dim\Hom_{k[\Delta]}(J_b,J_\lambda)=b.
\]
Summing over all blocks \(J_b\subset W'\) gives the cross contribution \(2m'\) to the
centralizer dimension. Equivalently, the corresponding cross contribution to the orbit
dimension is \\ \(2\lambda m'-2m'=2(\lambda-1)m'\). This proves the displayed formula for
\(D(W)\).

For a single block \(J_\lambda\), the centralizer has dimension \(\lambda\), so
\(D(J_\lambda)=\lambda^2-\lambda\). By \Cref{lem:blk},
\[
 2\Omega_2(J_\lambda)-D(J_\lambda)\ge2,
 \qquad
 2\Omega_3(J_\lambda)-D(J_\lambda)\ge2.
\]
For a trivial module \(T\), one has \(\Omega_2(T)=\Omega_3(T)=D(T)=0\). Now use
induction on the number of nontrivial Jordan blocks. From the decomposition
\[
 \Sym^2W=\Sym^2J_\lambda\oplus(J_\lambda\otimes W')\oplus\Sym^2W',
\]
we get
\[
\begin{aligned}
&\bigl(2\Omega_2(W)-D(W)\bigr)
-
\bigl(2\Omega_2(J_\lambda)-D(J_\lambda)\bigr)
-
\bigl(2\Omega_2(W')-D(W')\bigr) \\
&\qquad
=
2\Omega(J_\lambda\otimes W')-2(\lambda-1)m'.
\end{aligned}
\] By \eqref{eq:tenslb},
\[
\Omega(J_\lambda\otimes W')\ge \Omega(J_\lambda)\dim W'=(\lambda-1)m'.
\]
Thus the right-hand side is nonnegative. Induction gives \(2\Omega_2(W)-D(W)\ge2\).
Similarly, using the decomposition of \(\Sym^3W\), we have
\[
\begin{aligned}
&\bigl(2\Omega_3(W)-D(W)\bigr)
-
\bigl(2\Omega_3(J_\lambda)-D(J_\lambda)\bigr)
-
\bigl(2\Omega_3(W')-D(W')\bigr) \\
&\qquad
=
2\Omega(\Sym^2J_\lambda\otimes W')
+
2\Omega(J_\lambda\otimes\Sym^2W')
-
2(\lambda-1)m'.
\end{aligned}
\] Again by \eqref{eq:tenslb},
\[
\Omega(\Sym^2J_\lambda\otimes W')\ge \Omega_2(J_\lambda)m'\ge(\lambda-1)m',
\]
so the displayed cross contribution is nonnegative. Induction gives
\(2\Omega_3(W)-D(W)\ge2\). Moreover, in degree \(3\) the additional cross term
\(\Omega(J_\lambda\otimes \Sym^2W')\) is positive whenever \(W'\neq0\). Hence the
inequality is then strict beyond the basic lower bound. If \(W'=0\), then
\(W=J_\lambda\) is a single block; in the range \(m=\lambda\ge4\), \Cref{lem:blk} gives
\(
2\Omega_3(J_\lambda)-D(J_\lambda)\ge3.
\)
Therefore, for every nontrivial module \(W\) with \(m\ge4\), \(2\Omega_3(W)-D(W)\ge3\).
It remains to prove \eqref{eq:T}. For any finite-dimensional order-\(p\) unipotent
\(k[g]\)-module \(M\), define \(\Theta(M):=\Omega_2(M)+\Omega_3(M)-\dim M-D(M)\). Also
let \(\tau(M)\) denote the total dimension of the trivial Jordan blocks in \(M\);
equivalently, \(\tau(M)\) is the dimension of the largest direct summand of \(M\) on
which \(\Delta\) acts as zero.

We prove by induction on the number of nontrivial Jordan blocks that \(\Theta(M)\ge
-\tau(M)\) for every \(M\), with strict inequality whenever \(M\) is nontrivial and
\(\dim M\ge4\).

If \(M\) is trivial, then \(\Omega_2(M)=\Omega_3(M)=D(M)=0\), so \(\Theta(M)=-\dim
M=-\tau(M)\). If \(M=J_\lambda\) is one nontrivial block, then \(\tau(M)=0\), and
\Cref{lem:blk} gives
\[
 \Theta(J_\lambda)
 \ge
 \left\lfloor\frac{\lambda^2}{2}\right\rfloor
 +
 \frac{(\lambda-1)(\lambda+2)}2
 -
 \lambda
 -
 (\lambda^2-\lambda)
 \ge0.
\]
Moreover this inequality is strict for \(\lambda\ge4\).

Now suppose \(M\) is nontrivial and write \(M=J_\lambda\oplus M'\), where \(J_\lambda\)
is a largest nontrivial block and \(m':=\dim M'\). The cross contribution to
\(
\Theta(M)-\Theta(J_\lambda)-\Theta(M')
\)
is
\[
C:=
\Omega(J_\lambda\otimes M')
+
\Omega(\Sym^2J_\lambda\otimes M')
+
\Omega(J_\lambda\otimes\Sym^2M')
-
2(\lambda-1)m'.
\]
By \eqref{eq:tenslb},
\[
 C\ge
 \bigl(\Omega_2(J_\lambda)-(\lambda-1)\bigr)m'
 +
 (\lambda-1)\dim\Sym^2M'.
\]
Since
\(
\Omega_2(J_\lambda)-(\lambda-1)\ge1,
\)
this lower bound is at least \(m'\), hence at least \(\tau(M')\). If \(M'\neq0\), then
\(\dim\Sym^2M'>0\), so the lower bound is strictly larger than \(\tau(M')\).

By the induction hypothesis applied to \(M'\), \(\Theta(M')\ge-\tau(M')\). Also
\(\Theta(J_\lambda)\ge0\). Therefore
\(
\Theta(M)
=
\Theta(J_\lambda)+\Theta(M')+C
\ge0
\)
for every nontrivial \(M\). Moreover the inequality is strict if either \(M'\neq0\), or
if \(M'=0\) and \(M=J_\lambda\) has length \(\lambda\ge4\). Hence \(\Theta(M)>0\)
whenever \(M\) is nontrivial and \(\dim M\ge4\).

Applying this to the original module \(W\), we obtain \(\Theta(W)>0\). That is,\\
\(\Omega_2(W)+\Omega_3(W)-m-D(W)>0\). Since \(D(W)=D\), this is exactly
\(\Omega_2(W)+\Omega_3(W)-m>D\). The proof is complete.
\end{proof}

\subsection{Geometric applications}\label{sec:opgeom}

Recall that throughout this section
\[
D=\dim\Cl_{\GL(V)}(N),
\qquad
\Omega_d(V)=\dim\Img(\Delta|S_d).
\]
For a quotient \(R\), we similarly write \(\Omega_d(R):=\dim\Img(\Delta|R_d)\). The
estimates of \Cref{thm:opunif} will be applied to the \(k[g]\)-module \(V^\vee\), whose
dimension is \(m=\dim V\).

\begin{proposition}\label{prop:opeq}
Let \(X\subset\PP^n\) be a general smooth positive-dimensional complete intersection
whose lowest-degree block is \((d^r)\). Assume either
\[
 d=2,
 \quad r\ge3,
 \qquad\text{or}\qquad
 d\ge3,
 \quad r\ge2.
\]
Then \(X\) admits no nontrivial linear automorphism of order \(p\).
\end{proposition}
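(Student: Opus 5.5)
By \Cref{lem:fl}, every linear automorphism of \(X\) preserves the lowest-degree block. So it suffices to show that a general \(r\)-plane \(U\in\Gr(r,S_d)\) is stable under no order-\(p\) element \(g\). The smooth locus is open in \(\Gr(r,S_d)\), and since the statement is about a general member, we may restrict to it.

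There are only finitely many order-\(p\) unipotent conjugacy classes, one for each Jordan type. So, as in \Cref{prop:fixed-incidence}, it is enough to fix one class with representative \(g=I+N\) and \(D=\dim\Cl_{\GL(V)}(N)\), and prove \(\dim\mathfrak I_g<\dim\Gr(r,S_d)\). By \Cref{lem:stgr} with \(s=r\) and \(M=S_d\), the \(g\)-stable \(r\)-planes for fixed \(g\) have codimension at least \(r\Omega_d(V)-r(r-1)\). Letting \(g\) vary over its class adds at most \(D\). The target inequality is therefore
\[
r\Omega_d(V)-r(r-1)>D .
\]
Throughout, \(m=n+1\ge r+2\), because \(n-r\ge1\).

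\emph{Case \(d\ge3\), \(r\ge2\).} By \Cref{lem:ambmonp}, \(\Omega_d(V)\ge\Omega_3(V)\). By \eqref{eq:2cs} (we have \(m\ge4\)), \(\Omega_3\ge \tfrac12 D+\tfrac32\), and by \eqref{eq:I3}, \(\Omega_3\ge 2m-3\). For \(r=2\), the target is \(2\Omega_3-2>D\), which follows from \(2\Omega_3-D\ge3\). For \(r\ge3\), write \(r\Omega_3=2\Omega_3+(r-2)\Omega_3\). The target then reduces to
\[
(2\Omega_3-D)+(r-2)\Omega_3>r(r-1).
\]
The left side is at least \(3+(r-2)(2m-3)\ge 3+(r-2)(2r+1)\). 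This exceeds \(r(r-1)\) because \(2r^2-3r+1>r^2-r\) for \(r\ge3\).

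\emph{Case \(d=2\), \(r\ge3\).} Here I use \eqref{eq:2q}, namely \(2\Omega_2-D\ge2\), together with \eqref{eq:I2}, namely \(\Omega_2\ge m\ge r+2\). The target \(r\Omega_2-r(r-1)>D\) becomes
\[
(2\Omega_2-D)+(r-2)\Omega_2>r(r-1).
\]
The left side is at least \(2+(r-2)(r+2)=r^2-2\), and \(r^2-2>r^2-r\) for \(r\ge3\). This is also consistent with the excluded case \((2,2)\): for \(r=2\) this bound does not force strict inequality.

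The estimates \eqref{eq:I2}–\eqref{eq:T} are stated for the module \(V^\vee\), not \(V\). The main point to check is therefore that the dual of an order-\(p\) unipotent module has the same Jordan type. This holds, so \(D\) computed on \(V^\vee\) agrees with \(D\) computed on \(V\), and \(\Omega_d(V)\) is exactly the image rank of \(\Delta\) on \(\Sym^d(V^\vee)\).

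It also needs checking that \Cref{lem:stgr} applies with \(s=r<\dim S_d\); this is clear. I expect the only delicate point to be the \(r=2\), \(d\ge3\) boundary, which is exactly why the sharper bound \eqref{eq:2cs} was proved. Finally, taking the finite union over classes via \Cref{prop:fixed-incidence} concludes the proof.
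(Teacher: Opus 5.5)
Your proposal is correct and follows the paper's proof essentially step for step. You reduce via \Cref{lem:fl} to $\Delta$-stable $r$-planes in $S_d$, take the codimension bound $r\Omega_d(V)-r(r-1)$ from \eqref{eq:stgr}, and close with \eqref{eq:I2} and \eqref{eq:2q} when $d=2$, and with \eqref{eq:ambmonp}, \eqref{eq:2cs} and \eqref{eq:I3} when $d\ge3$. This gives the same margins $r-2$ and $(r-1)^2$ as the paper, and your check that $V^\vee$ has the same Jordan type as $V$ makes explicit a point the paper leaves implicit when it applies \Cref{thm:opunif} to $V^\vee$.
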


\begin{proof}
Fix a nontrivial order-\(p\) unipotent class. By \Cref{lem:fl}, any automorphism of
\(X\) preserves the lowest-degree equation space \(U\in\Gr(r,S_d)\). Thus \(U\) must be
\(\Delta\)-stable, and \eqref{eq:stgr} gives corresponding codimension at least
\(r\Omega_d(V)-r(r-1)\). It is enough to show that this is larger than \(D\).

If \(d=2\), then \(r\ge3\) and \(m=n+1\ge r+2\). By \eqref{eq:I2},
\(\Omega_2(V)\ge m\ge r+2\), and by \eqref{eq:2q}, \(2\Omega_2(V)-D\ge2\). Therefore
\[
\begin{aligned}
 r\Omega_2(V)-r(r-1)-D
 &= (r-2)\Omega_2(V)+\bigl(2\Omega_2(V)-D\bigr)-r(r-1) \\
 &\ge (r-2)(r+2)+2-r(r-1)
 = r-2>0.
\end{aligned}
\]

Now assume \(d\ge3\). By the monotonicity \eqref{eq:ambmonp}, \(\Omega_d(V)\ge
\Omega_3(V)\). If \(r=2\), then \eqref{eq:2cs} gives
\(
 2\Omega_d(V)-2-D\ge 2\Omega_3(V)-2-D>0.
\)
If \(r\ge3\), then \(m\ge r+2\). Using \(\Omega_d(V)\ge\Omega_3(V)\),
\eqref{eq:2cs}, and \eqref{eq:I3}, we get
\[
\begin{aligned}
 r\Omega_d(V)-r(r-1)-D
 &\ge (r-2)\Omega_3(V) +\bigl(2\Omega_3(V)-D\bigr)-r(r-1) \\
 &\ge (r-2)(2m-3)+3-r(r-1) \\
 &\ge (r-2)(2r+1)+3-r(r-1) \\
 &= (r-1)^2>0.
\end{aligned}
\]
Thus the stabilized locus associated with this conjugacy class is proper. Since there
are only finitely many such classes, the result follows.
\end{proof}

\begin{lemma}\label{lem:2q2g}
Assume that \(k\) is algebraically closed, \(\operatorname{char} k\neq 2\), and
\(n\ge 4\). Let \(Y\subset \mathbb P^n_k\) be a general smooth complete
intersection of two quadrics. Then its linear automorphism group
\(
\Aut_L(Y):=\{g\in \PGL_{n+1}(k)\mid g(Y)=Y\}
\)
is a finite \(2\)-group. In fact, for such a general \(Y\), one has
\(\Aut_L(Y)\cong (\mathbb Z/2\mathbb Z)^n\).
\end{lemma}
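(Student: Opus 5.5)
We will use the classical simultaneous diagonalization of a general pencil of quadrics in characteristic different from \(2\). Let \(\Lambda=\langle Q_0,Q_1\rangle\subset S_2\) be the pencil defining \(Y\). For general \(Y\), the binary form \(\det(sQ_0+tQ_1)\) of degree \(n+1\) has \(n+1\) distinct roots. This is an open condition, nonempty because diagonal pencils \(\sum x_i^2,\ \sum\lambda_i x_i^2\) with distinct \(\lambda_i\) satisfy it. Under this condition, the standard argument (the generalized eigenspaces of \(Q_0^{-1}Q_1\) are one-dimensional and pairwise orthogonal for both forms, which uses \(\operatorname{char}k\neq2\)) gives coordinates in which
\[
Q_0=\sum_{i=0}^n x_i^2,\qquad Q_1=\sum_{i=0}^n\lambda_i x_i^2,
\]
with the \(\lambda_i\) pairwise distinct. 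Sign changes \(x_i\mapsto\pm x_i\) preserve both quadrics. Modulo the scalar \(-I\), they give a subgroup \(\Sigma\cong(\ZZ/2\ZZ)^{n+1}/\{\pm1\}\cong(\ZZ/2\ZZ)^n\) of \(\Aut_L(Y)\).

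Next we determine the full group. By \Cref{lem:fl}, every \(h\in\Aut_L(Y)\) preserves \(\Lambda\), so \(h\) induces an element \(\bar h\in\PGL(\Lambda)\cong\PGL_2\). This \(\bar h\) permutes the \(n+1\) singular members of the pencil, which correspond to the points \([1:-\lambda_i]\in\PP(\Lambda)\). Since \(n+1\ge5\), the stabilizer in \(\PGL_2\) of a set of at least five points is finite. For \(\lambda\) general, this stabilizer is trivial, because a nontrivial element of \(\PGL_2\) preserving a set of \(\ge5\) points imposes a nontrivial algebraic condition on the cross-ratios. We then argue as follows: the locus of \((\lambda_0,\dots,\lambda_n)\) admitting a nontrivial such projectivity is a finite union of proper closed subsets, one for each permutation of the points and each possible element. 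So for general \(Y\), \(\bar h\) is trivial. This means \(h^*Q_0=cQ_0\) and \(h^*Q_1=cQ_1\) for a lift \(h\) and a common scalar \(c\). After rescaling the lift, \(c=1\), so \(h\) commutes with \(Q_0^{-1}Q_1=\operatorname{diag}(\lambda_i)\). The \(\lambda_i\) are distinct, so \(h\) is diagonal, say \(h=\operatorname{diag}(\mu_i)\). The condition \(h^*Q_0=Q_0\) then forces \(\mu_i^2=1\). Hence \(\Aut_L(Y)=\Sigma\cong(\ZZ/2\ZZ)^n\), and in particular it is a finite \(2\)-group.

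The step we expect to be the main obstacle is showing that a general set of \(n+1\ge5\) points in \(\PP^1\) has trivial projective stabilizer. We plan a dimension count: for each of the finitely many permutations \(\pi\) of the points, the set of configurations fixed by some nontrivial \(\phi\in\PGL_2\) realizing \(\pi\) has dimension at most \(3+\dim\{\text{configurations fixed by a fixed }\phi\}\). A fixed nontrivial \(\phi\) acts on each orbit of \(\pi\) by a determined rule: the first point of the orbit is free and the rest are its images. So this locus has dimension equal to the number of cycles of \(\pi\). If \(\pi\neq\id\), the number of cycles is at most \(n\), and we also need to use that \(\phi\) has at most two fixed points. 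If \(\pi=\id\), then \(\phi\) fixes all \(n+1\ge5\) points, which is impossible, so this case contributes nothing. In each remaining case the total stays below \(n+1\), provided we account for the \(3\) dimensions of \(\PGL_2\) against the cycle structure. This bookkeeping needs care (for instance \(\phi\) of order \(2\) with many \(2\)-cycles), and the hypothesis \(n\ge4\) is what makes it close. A cleaner alternative we would try first is the classical fact that a pencil with distinct \(\lambda_i\) has generic moduli of dimension \(n-2\ge2\) once \(n+1\ge5\), which makes a generic configuration have no nontrivial automorphism.
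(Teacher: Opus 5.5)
Apart from one step, your argument is the paper's. The automorphism $h$ preserves $\Lambda$ and permutes the $n+1$ singular members. A general discriminant configuration has trivial stabilizer in $\PGL_2$, so $h$ acts on $\Lambda$ by a scalar, and then $h$ is diagonal with entries $\pm1$. You get diagonality from $h$ commuting with $Q_0^{-1}Q_1=\operatorname{diag}(\lambda_i)$; this is equivalent to the paper's argument that $h$ fixes the vertex $[e_i]$ of each cone $Q_1-\lambda_iQ_0$, and that part is fine.

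The gap is the step you flagged: as you set it up, the count does not close. You use $3=\dim\PGL_2$ plus one parameter for each cycle of $\pi$ through points not fixed by $\phi$. For $n=4$, an involution realizing cycle type $(2,2,1)$ gives $3+2=5=n+1$. For $n=5$, type $(2,2,2)$ gives $3+3=6=n+1$. So in exactly these two cases you get no proper subset. For $n=4$ the bad locus really is $4$-dimensional, so the count is too weak, not the claim.

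The missing observation is that a cycle of length $\ell\ge2$ through a point not fixed by $\phi$ forces $\phi^\ell=\id$.
\begin{itemize}
\item If $\phi$ has infinite order, this is impossible. Then all $n+1\ge5$ points would be $\phi$-fixed, which is absurd.
\item If $\phi$ has finite order $N\ge2$, every orbit of a non-fixed point has exactly $N$ elements, so there are at most $\lfloor (n+1)/2\rfloor$ free cycles. Moreover, the elements of order $N$ in $\PGL_2$ form finitely many conjugacy classes, each of dimension $2$ rather than $3$, since centralizers are one-dimensional.
\end{itemize}
This gives the bound $2+\lfloor (n+1)/2\rfloor<n+1$ for all $n+1\ge5$, which is what you need.

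Your fallback through the moduli dimension cannot replace this. A moduli count of $n-2$ only shows that the generic stabilizer is finite, not that it is trivial. Indeed, the varieties $Y$ themselves have $(n-2)$-dimensional moduli, yet every one of them carries the $(\ZZ/2\ZZ)^n$ of sign changes.

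The paper simply asserts the five-point fact, remarking only that a projectivity is determined by three points, which does not by itself prove it. With the repair above, your argument is more complete than the paper's at this step.
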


\begin{proof}
Let \(W=H^0(\mathbb P^n,\mathcal I_Y(2))\). Since \(Y\) is a complete
intersection of two quadrics, \(\dim W=2\), and \(\mathbb P(W)\) is precisely
the pencil of quadrics containing \(Y\).

If \(g\in \Aut_L(Y)\), then \(g^*W=W\). Hence \(g\) induces an automorphism of
the parameter line \(\mathbb P(W)\simeq \mathbb P^1\). This induced automorphism
preserves the discriminant locus \(\Delta\subset \mathbb P(W)\), namely the set
of singular quadrics in the pencil.

Since $Y$ is a general complete intersection, we may assume its associated pencil $\mathbb P(W)$ satisfies two generic conditions: 
(1) The discriminant locus $\Delta \subset \mathbb P(W)$ consists of exactly $n+1$ distinct points (i.e., the pencil is regular). 
(2) The set $\Delta$ has a trivial setwise stabilizer in $\PGL(W) \cong \PGL_2(k)$. 

The second condition relies on a classical fact from projective geometry: an automorphism of $\mathbb P^1$ is uniquely determined by its action on any three distinct points. Consequently, a general set of $m \ge 5$ points on $\mathbb P^1$ possesses no non-trivial projective symmetries. Since $n \ge 4$, our locus $\Delta$ consists of $n+1 \ge 5$ general points, so any projective transformation preserving the set $\Delta$ must be the identity.

Recall that any $g \in \Aut_L(Y)$ induces an automorphism on $\mathbb P(W)$ that maps singular quadrics to singular quadrics, meaning it must preserve the set $\Delta$. Because the stabilizer of $\Delta$ is trivial, this induced automorphism is the identity. Therefore, $g$ fixes every point in $\mathbb P(W)$, which implies that $g$ preserves every individual quadric as a point of the pencil.

Since \(\operatorname{char} k\neq2\) and the discriminant is reduced, we may
diagonalize the pencil. Thus, after a projective change of coordinates, we may
write
\[
\begin{aligned}
Q_0&=\sum_{i=0}^n x_i^2, \quad Q_1=\sum_{i=0}^n \lambda_i x_i^2,
\end{aligned}
\]
where the \(\lambda_i\) are distinct. The singular members of the pencil are
\(Q_1-\lambda_iQ_0\), for \(i=0,\dots,n\), and the unique singular point of
\(Q_1-\lambda_iQ_0\) is the coordinate point \([e_i]\).

Since \(g\) fixes each singular member of the pencil, it fixes its unique
singular point. Hence \(g\) fixes each coordinate point
\([e_0],\dots,[e_n]\). Therefore \(g\) is represented by a diagonal matrix, say
\(x_i\mapsto a_i x_i\).

Moreover, since the induced action of \(g\) on \(\mathbb P(W)\) is trivial, it
fixes the point \([Q_0]\in \mathbb P(W)\). Thus \(g^*Q_0=cQ_0\) for some
\(c\in k^\times\). But \(g\) is diagonal, so this says
\(\sum a_i^2x_i^2=c\sum x_i^2\), hence \(a_i^2=c\) for every \(i\). After
multiplying the representing matrix by a common scalar, we may assume
\(a_i=\pm1\) for all \(i\).

Thus every element of \(\Aut_L(Y)\) is a projective sign change. Conversely, any
projective sign change \(x_i\mapsto \epsilon_i x_i\), with
\(\epsilon_i\in\{\pm1\}\), preserves both \(Q_0\) and \(Q_1\), hence preserves
\(Y\). The common sign change acts trivially in projective space, so
\[
\Aut_L(Y)\cong \{\pm1\}^{n+1}/\{\pm(1,\dots,1)\}
\cong (\mathbb Z/2\mathbb Z)^n.
\]
In particular, \(\Aut_L(Y)\) is a finite \(2\)-group.
\end{proof}

\begin{proposition}\label{prop:op22}
Let \(X\) be a general smooth positive-dimensional complete intersection whose
multidegree begins with \((2,2,b^s), \ b\ge3, s\ge1\). Then \(X\) admits no
nontrivial linear automorphism of order \(p\).
\end{proposition}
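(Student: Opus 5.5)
The plan is to argue exactly as in Case~1 of \Cref{prop:mix}, with \Cref{lem:2q2g} replacing the finiteness-plus-\Cref{lem:kill} argument. Since \(p\) is odd, a \(2\)-group has no elements of order \(p\). So it should suffice to show that a general \(X\) of type \((2,2,b^s)\) has a general smooth pencil of quadrics as its lowest-degree part. No incidence estimate is then needed at all.

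\emph{Step 1: the dimension hypothesis of \Cref{lem:2q2g} holds.} We have \(c=2+s\ge 3\) and \(n-c\ge 1\), hence \(n\ge 4\). Also \(\operatorname{char}k=p\ge 3\neq 2\).

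\emph{Step 2: reduction to the pencil.} Let \(\Lambda\subset S_2\) be the quadratic equation space of \(X\) and \(Y=V(\Lambda)\). By \Cref{lem:fl}, every \(h\in\Aut_L(X)\) preserves \(\Lambda\), hence \(h\in\Aut_L(Y)\).

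\emph{Step 3: conclusion for general \(Y\).} If \(Y\) is general in the sense of \Cref{lem:2q2g}, then \(\Aut_L(Y)\cong(\ZZ/2\ZZ)^n\). This contains no element of odd order \(p\), so \(\Aut_L(X)\) has no element of order \(p\).

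\emph{Step 4: making ``general'' precise; this is the main obstacle.} \Cref{lem:2q2g} only says ``general \(Y\)''. I will read its proof as showing that the conditions used there form a nonempty Zariski open subset \(\mathcal U\subset\Gr(2,S_2)\) of smooth pencils. The conditions are: the discriminant consists of \(n+1\) distinct points, and this point set has trivial stabilizer in \(\PGL_2\). The first is open (the discriminant of the binary form \(\det(sA+tB)\) is nonzero). For the second, I take the set of \((n+1)\)-point configurations on \(\PP^1\) with nontrivial projective stabilizer. Because \(n+1\ge5\), this set is a proper closed subset of the configuration space; its preimage under the discriminant map is then closed and proper, since every reduced configuration arises from a diagonal pencil.

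Next I consider the forgetful map \(\pi\colon T^\circ\to\Gr(2,S_2)\), \(X\mapsto\Lambda\). Its image contains a dense open subset of smooth pencils. Indeed, by \Cref{prop:bt} applied to any smooth \(Y\) with \(e=b\) and \(r=s\le\dim Y\), there exist smooth extensions \(X\) of \(Y\); any remaining higher-degree equations are adjoined the same way, as in \Cref{rem:mixed-reduction}. Then \(\pi^{-1}(\mathcal U)\) is a nonempty open subset of the irreducible space \(T^\circ\), hence dense. A general \(X\) therefore lies in it, and Steps~2 and~3 finish the proof. Alternatively, one can phrase Step~4 through \Cref{prop:fixed-incidence}: for each of the finitely many order-\(p\) unipotent classes, the stabilized locus is contained in \(T^\circ\setminus\pi^{-1}(\mathcal U)\), which is a proper closed subset.
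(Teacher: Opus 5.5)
Your proposal is correct and is essentially the paper's proof: \Cref{lem:fl} gives $\Aut_L(X)\subset\Aut_L(Y)$ for the pencil $Y$, \Cref{lem:2q2g} makes $\Aut_L(Y)$ a $2$-group for general $Y$, and $p\neq 2$ rules out elements of order $p$. Your check that $n\ge 4$ and your argument that a general $X$ has a general pencil (pulling the open set $\mathcal U$ back along $X\mapsto\Lambda$) spell out what the paper's short proof leaves implicit.
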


\begin{proof}
By \Cref{lem:fl}, any linear automorphism of \(X\) preserves the intersection \(Y\) of
the first two quadrics. For general \(Y\), \Cref{lem:2q2g} shows that \(\Aut_L(Y)\) is a
\(2\)-group. Since \(p\neq2\), it contains no nontrivial element of order \(p\).
\end{proof}

\begin{proposition}\label{prop:opmix}
Let \(X\) be a general smooth positive-dimensional complete intersection whose
multidegree begins with one of the following patterns:
\[
 (2,b^s)\quad (b\ge3),
 \qquad
 (a,b^s)\quad (3\le a<b),
\]
where \(s\ge1\). Then \(X\) admits no nontrivial linear automorphism of order \(p\).
\end{proposition}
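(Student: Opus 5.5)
We follow the two-step incidence count of \Cref{rem:2step}, now in its unipotent form, exactly parallel to Cases 2 and 3 of \Cref{prop:mix}. Fix a nontrivial order-\(p\) unipotent class, with normalized lift \(g=I+N\) and \(D:=\dim\Cl_{\GL(V)}(N)\). By \Cref{rem:mixed-reduction} it suffices to consider the first two distinct degrees.

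Let \(H\) denote the lowest-degree equation, \(Q\) of degree \(2\) or \(F\) of degree \(a\ge3\), and put \(e=\deg H\). For a general member \(H\) is smooth, hence irreducible, since \(n\ge3\). By \Cref{lem:fl}, \(\langle H\rangle\) is \(g\)-stable. A stable line lies in \(\Ker\Delta\), so this condition has codimension \(\Omega_e(V)\) in \(\PP(S_e)\).

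Once such an \(H\) is fixed, \(H\) is \(g\)-invariant, since a unipotent element acts trivially on a stable line. Set \(R:=S/(H)\). By \Cref{lem:new} and \Cref{prop:bt}, the degree-\(b\) minimal equation space is an \(s\)-plane in \(R_b\) stable under \(\Delta\). \Cref{lem:stgr} then gives codimension at least \(s\Omega_b(R)-s(s-1)\). It therefore suffices to prove
\[
\Omega_e(V)+s\,\Omega_b(R)-s(s-1)>D .
\]
The conjugacy-class dimension is the same in \(\PGL(V)\) and \(\GL(V)\). Finiteness of the unipotent classes then gives the claim.

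\emph{Case \((2,b^s)\).} Combine \Cref{lem:qmonp}, applied with \(t\ge3>e=2\), and \eqref{eq:qcr2} to get
\[
\Omega_b(R)\ge\Omega_3(R)\ge\Omega_3(V)-m .
\]
For \(s=1\) the target is \(\Omega_2+\Omega_3-m>D\), which is exactly \eqref{eq:T}. For \(s\ge2\) we have \(m\ge s+3\), and we write
\[
\Omega_2+s(\Omega_3-m)-s(s-1)-D=\bigl(\Omega_2+\Omega_3-m-D\bigr)+(s-1)(\Omega_3-m)-s(s-1).
\]
By \eqref{eq:I3}, \(\Omega_3-m\ge m-3\ge s\). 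The middle terms are then at least \(s(s-1)\), and positivity follows from \eqref{eq:T}.

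\emph{Case \((a,b^s)\) with \(3\le a<b\).} Use \Cref{lem:qmonp} and \eqref{eq:qcre} to get
\[
\Omega_b(R)\ge\Omega_a(R)\ge\Omega_a(V)-1\ge\Omega_3(V)-1 ,
\]
the last step by \eqref{eq:ambmonp}. Also \(\Omega_a(V)\ge\Omega_3(V)\). The target follows from
\[
(s+1)\Omega_3-s-s(s-1)-D>0 .
\]
Split off \(2\Omega_3-D\ge3\) by \eqref{eq:2cs}. What remains is \((s-1)\Omega_3-s^2\), and with \(\Omega_3\ge 2m-3\ge 2s+1\) this is at least \(s^2-s-1\). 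That is nonnegative for \(s\ge2\). For \(s=1\) the total is \(2\Omega_3-1-D\ge2\).

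The main obstacle is the \((2,b^s)\) case. There the loss \(\dim V\) in \eqref{eq:qcr2} is large, and one must check that the constants line up. This is exactly why the sharp estimate \eqref{eq:T} was proved, so the step reduces to careful bookkeeping of \(m\ge s+3\) against the \(s(s-1)\) loss.
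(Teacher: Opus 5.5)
Your proof is correct and follows the paper's argument. It uses the same two-step count: codimension $\Omega_e(V)$ for the stable line, then \eqref{eq:stgr} for the $s$-plane in $R_b$. It also uses the same bounds from \Cref{lem:qmonp}, \Cref{lem:qcrp}, \eqref{eq:ambmonp} and \eqref{eq:I3}, and closes with \eqref{eq:T} and \eqref{eq:2cs}. The only difference is bookkeeping: the paper first uses $\Omega_b(R)\ge s$ to get $s\Omega_b(R)-s(s-1)\ge\Omega_b(R)$, which reduces to the $s=1$ inequality, whereas you expand in $s$ directly.
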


\begin{proof}
Fix a nontrivial order-\(p\) unipotent class. The condition \(n-(1+s)\ge1\) gives \(s\le
m-3\).

First consider the type \((2,b^s)\). By \Cref{lem:fl}, the quadric line is preserved;
since the action is unipotent, a defining smooth quadric \(Q\) is fixed by \(g\). The
first contribution is \(\Omega_2(V)\). Set \(R=S/(Q)\). By \Cref{lem:new,prop:bt}, the
degree-\(b\) minimal equation space is a general \(s\)-plane in \(R_b\), and it must be
\(\Delta\)-stable. Let \(\Omega_b(R):=\dim\Img(\Delta|R_b)\). By \eqref{eq:qmonp} and
\eqref{eq:qcr2},
\[
 \Omega_b(R)\ge \Omega_3(R)\ge \Omega_3(V)-m.
\]
Using \eqref{eq:I3}, we get \(\Omega_b(R)\ge m-3\ge s\). Hence, by \eqref{eq:stgr}, the
degree-\(b\) equation space contributes at least \(s\Omega_b(R)-s(s-1)\). This is at least
\(\Omega_b(R)\), because
\[
s\Omega_b(R)-s(s-1)-\Omega_b(R)=(s-1)(\Omega_b(R)-s)\ge0.
\]
Therefore the total contribution is at least \(\Omega_2(V)+\Omega_3(V)-m>D\) by
\eqref{eq:T}.

Now consider the type \((a,b^s)\), with \(3\le a<b\). We work over the open locus where
the degree-\(a\) hypersurface \(F\) is smooth, hence irreducible. The first contribution
is \(\Omega_a(V)\). Set \(R=S/(F)\), and let \(\Omega_b(R):=\dim\Img(\Delta|R_b)\). By
\eqref{eq:qmonp} and \eqref{eq:qcre},
\[
 \Omega_b(R)\ge \Omega_a(R)\ge \Omega_a(V)-1.
\]
Since \(\Omega_a(V)\ge \Omega_3(V)\ge m\), we have \(\Omega_b(R)\ge m-1\ge s\), and
therefore by \eqref{eq:stgr},
\[
s\Omega_b(R)-s(s-1)-\Omega_b(R)=(s-1)(\Omega_b(R)-s)\ge0.
\]
Thus the degree-\(b\) condition contributes at least \(\Omega_b(R)\). The total contribution
is at least
\[
 \Omega_a(V)+\Omega_a(V)-1\ge 2\Omega_3(V)-1>D
\]
by \eqref{eq:2cs}. This proves the proposition.
\end{proof}

\begin{proof}[Proof of \Cref{thm:op}]
Let
\[
 2\le d_1\le\cdots\le d_c,
 \qquad c\ge2,
 \qquad n-c\ge1,
\]
and assume the multidegree is not \((2,2)\). We exclude each order-\(p\) unipotent
conjugacy class; there are only finitely many.

Let \((a^r)\) be the lowest-degree block. If \(r\ge2\), then \Cref{prop:opeq} applies,
except when the lowest block is exactly \((2,2)\). In that remaining case the
multidegree begins with \((2,2,b^s)\) for some \(b\ge3\), and \Cref{prop:op22} applies.

If \(r=1\), then the multidegree begins either with \((2,b^s)\), \(b\ge3\), or with
\((a,b^s)\), \(3\le a<b\). These are covered by \Cref{prop:opmix}. Therefore a general
complete intersection in the stated range has no nontrivial linear automorphism of order
\(p\).
\end{proof}

\section{Involutions in characteristic \texorpdfstring{$2$}{2}}
\label{sec:c2}

This section proves \Cref{thm:c2}. Throughout this section \(k\) is algebraically closed
of characteristic \(2\), and we write \(m:=\dim V=n+1\). In the range of \Cref{thm:c2},
namely \(c\ge2\) and \(n-c\ge1\), one has \(m\ge4\).

Let \(g\in\PGL(V)\) be a nontrivial element of order \(2\). Choose a lift to \(\GL(V)\).
After rescaling this lift, we may assume \(g^2=I\). Since \(\operatorname{char}k=2\), we
may write \(g=I+N, \ N\neq0, \ N^2=0\). On \(S=\Sym(V^\vee)\) we set
\(\Delta:=g^*-\id\). Then \(\Delta^2=0\). We denote \(\kappa:=\rk(N)\). Thus \(1\le
\kappa\le \frac m2\). For each \(d\ge0\), set \(\Omega_d(V):=\dim\Img(\Delta|S_d)\).

\subsection{Linear algebra in characteristic \texorpdfstring{$2$}{2}}

\begin{lemma}\label{lem:c2orb}
For a nontrivial involution in characteristic \(2\) with \(\rk(N)=\kappa\),
\begin{equation}\label{eq:c2orb}
\dim \Cl_{\PGL(V)}(g)=2\kappa(m-\kappa).
\end{equation}
\end{lemma}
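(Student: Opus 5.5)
We compute the dimension first in \(\GL(V)\) and then pass to \(\PGL(V)\), following the pattern of \Cref{rem:pgl}.

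\emph{Normal form.} Since \(N^2=0\) and \(\rk N=\kappa\), the Jordan form of \(N\) consists of \(\kappa\) blocks of size \(2\) and \(m-2\kappa\) blocks of size \(1\). In particular \(\Img N\subset\Ker N\) and \(2\kappa\le m\). The conjugacy class of \(g=I+N\) in \(\GL(V)\) coincides with the conjugacy class of \(N\), because conjugation commutes with adding \(I\).

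\emph{Centralizer in \(\GL(V)\).} The centralizer of \(N\) in \(\operatorname{End}(V)\) is \(\operatorname{End}_{k[N]}(V)\). Decompose \(V\simeq J_2^{\kappa}\oplus J_1^{m-2\kappa}\). Using the standard Hom dimensions \(\dim\Hom(J_a,J_b)=\min(a,b)\), we get
\[
\dim Z(N)=\sum_{i,j}\min(\lambda_i,\lambda_j)=4\kappa^2+2\cdot\kappa(m-2\kappa)+(m-2\kappa)^2 .
\]
This equals \(m^2-2\kappa m+2\kappa^2\). The centralizer in \(\GL(V)\) is open in this linear space, so it has the same dimension. Hence
\[
\dim\Cl_{\GL(V)}(g)=m^2-\dim Z(N)=2\kappa m-2\kappa^2=2\kappa(m-\kappa).
\]
As a sanity check, one can verify this directly: \(N\) is determined by the pair \((\Img N\subset \Ker N)\), which is a point of a partial flag variety, together with an isomorphism \(V/\Ker N\to \Img N\). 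The dimension count is
\[
\kappa(m-\kappa)+(m-\kappa-\kappa)\kappa+\kappa^2=2\kappa(m-\kappa).
\]

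\emph{Passing to \(\PGL(V)\).} Consider the surjection \(\Cl_{\GL(V)}(g)\to\Cl_{\PGL(V)}(\bar g)\). Its fibers have dimension \(0\): if \(cg'\) is conjugate to \(g\) with \(g'\in\Cl(g)\), then comparing eigenvalues (all equal to \(1\)) forces \(c=1\). Alternatively, \((cg')^2=I\) forces \(c=\pm1\), and \(c=1\) in characteristic \(2\). The map is therefore bijective, and the dimensions agree.

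The only mildly delicate point is this last step in characteristic \(2\). One must make sure that the normalization \(g^2=I\) singles out the lift uniquely, so that no scalar ambiguity changes the dimension. The centralizer count itself is routine.
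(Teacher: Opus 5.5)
Your proof is essentially the paper's argument. Both compute the centralizer dimension of \(N\) from its Jordan type \((2^\kappa,1^{m-2\kappa})\) and subtract it from \(m^2\). The paper uses the transpose partition \((m-\kappa,\kappa)\), giving \((m-\kappa)^2+\kappa^2\); you use \(\sum_{i,j}\min(\lambda_i,\lambda_j)\), which is an equivalent formula.

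Your display contains an arithmetic slip, though. There are \(\kappa^2\) ordered pairs of size-\(2\) blocks, and each contributes \(\dim\Hom(J_2,J_2)=2\). So the first term should be \(2\kappa^2\), not \(4\kappa^2\). As written, your sum equals \(m^2-2\kappa m+4\kappa^2\), not the value you claim. With the correction it becomes \(2\kappa^2+2\kappa(m-2\kappa)+(m-2\kappa)^2=m^2-2\kappa m+2\kappa^2\), and the rest of your computation goes through.

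Your flag-variety count is correct, and it is a complete independent proof on its own. The class of \(N\) is exactly the set of square-zero endomorphisms of rank \(\kappa\). That set fibers over the partial flags \((\Img N\subset\Ker N)\), which form a \(\kappa(m-\kappa)+\kappa(m-2\kappa)\)-dimensional family, with fibers \(\mathrm{Isom}(V/\Ker N,\Img N)\) of dimension \(\kappa^2\).

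Your passage to \(\PGL(V)\) is also correct: \(cg'\) is unipotent only when \(c=1\), so \(\Cl_{\GL(V)}(g)\to\Cl_{\PGL(V)}(\bar g)\) is bijective. This fills in a step that the paper's proof leaves implicit, since it only computes in \(\GL(V)\).
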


\begin{proof}
The Jordan type of \(N\) is \((2^\kappa,1^{m-2\kappa})\). The transpose partition is
\((m-\kappa,\kappa)\). Hence the centralizer of \(g\) in \(\GL(V)\) has dimension
\((m-\kappa)^2+\kappa^2\). Therefore
\[
\dim \Cl_{\GL(V)}(g)
=
m^2-\bigl((m-\kappa)^2+\kappa^2\bigr)
=
2\kappa(m-\kappa).
\]
This proves the claim.
\end{proof}

\begin{lemma}\label{lem:c2qimg}
For a nontrivial involution in characteristic \(2\) with \(\rk(N)=\kappa\),
\begin{equation}\label{eq:c2om2}
\Omega_2(V)=\kappa(m-\kappa).
\end{equation}
\end{lemma}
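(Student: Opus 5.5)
We prove \eqref{eq:c2om2} by computing the image of \(\Delta\) on \(S_2\) in an adapted basis. The plan is to decompose \(V^\vee\) into Jordan blocks, split \(\Sym^2(V^\vee)\) along that decomposition, and compute the image summand by summand.

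First, since \(g\) acts on \(V^\vee\) by the transpose (contragredient) action, the induced nilpotent \(N^\vee\) on \(V^\vee\) also has rank \(\kappa\) and square zero. Choose a basis of \(V^\vee\) adapted to its Jordan type \((2^\kappa,1^{m-2\kappa})\):
\[
x_1,\dots,x_\kappa,\quad y_1,\dots,y_\kappa,\quad z_1,\dots,z_{m-2\kappa},
\]
with \(g x_i=x_i+y_i\), \(g y_i=y_i\), and \(g z_j=z_j\). Let \(W:=\Span(y_i,z_j)=\Ker\Delta|_{V^\vee}\), which has dimension \(m-\kappa\). Then \(S_2\) decomposes as \(\Sym^2 W\oplus (X\otimes W)\oplus \Sym^2 X\), where \(X=\Span(x_i)\). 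This is not a \(g\)-stable decomposition, but it is a filtration-compatible one.

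Next, compute \(\Delta\) on monomials, using \(\Delta(fh)=\Delta f\cdot gh+f\,\Delta h\). On \(\Sym^2W\), \(\Delta=0\). On \(X\otimes W\), \(\Delta(x_iw)=y_iw\), which lands in \(\Sym^2W\). On \(\Sym^2X\), we get \(\Delta(x_ix_j)=(x_i+y_i)(x_j+y_j)-x_ix_j=x_iy_j+x_jy_i+y_iy_j\) for \(i\ne j\), and \(\Delta(x_i^2)=y_i^2\) in characteristic \(2\). So the image is spanned by two families:
\begin{itemize}[label=\textup{(\alph*)}]
\item \(\{y_iw\}\), which span \(Y\cdot W\subset \Sym^2W\) with \(Y=\Span(y_i)\). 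Its dimension is \(\dim\Sym^2W-\dim\Sym^2(\Span z_j)=\kappa(m-\kappa)-\binom{\kappa}{2}\). Indeed, \(\Sym^2W/(Y\cdot W)\cong\Sym^2 Z\), and one checks \(\binom{m-\kappa+1}{2}-\binom{m-2\kappa+1}{2}=\kappa(m-\kappa)-\binom{\kappa}{2}\).
\item The elements \(x_iy_j+x_jy_i+y_iy_j\) for \(i<j\). Their \(X\otimes W\)-components \(x_iy_j+x_jy_i\) are linearly independent, being supported on distinct pairs of monomials, and they are independent of \(\Sym^2W\).
\end{itemize}

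The squares \(y_i^2\) already lie in family (a). Hence
\[
\Omega_2=\kappa(m-\kappa)-\tbinom{\kappa}{2}+\tbinom{\kappa}{2}=\kappa(m-\kappa).
\]

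The only delicate point is the characteristic-\(2\) bookkeeping. We must check that \(\Delta(x_i^2)\) contributes nothing new, and that the cross terms \(x_iy_j+x_jy_i\) cannot be cancelled. In particular, we need that \(x_iy_i\) never appears, which would otherwise force a dependency through the symmetric pairing. Both checks amount to a careful inspection of monomial supports. As a sanity check, the final count equals \(\tfrac12\dim\Cl(g)\) from \cref{lem:c2orb}.
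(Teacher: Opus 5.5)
Your proof is correct and is essentially the paper's computation: the same adapted basis $x_i,y_i,z_a$ and the same formulas $\Delta(x_iw)=y_iw$, $\Delta(x_ix_j)=x_iy_j+x_jy_i+y_iy_j$ and $\Delta(x_i^2)=y_i^2$. The only difference is bookkeeping. The paper groups monomials into the $g$-stable summands $\Sym^2(M_i)$, $M_i\otimes M_j$, $M_i\otimes T$, $\Sym^2(T)$ (with $M_i=\Span(x_i,y_i)$), so the image ranks simply add to $\kappa+2\binom{\kappa}{2}+\kappa q$. Your splitting by $x$-degree is not $g$-stable, so you need the projection onto $X\otimes W$ to show the two families are independent.
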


\begin{proof}
Choose a basis of \(V^\vee\) of the form
\[
x_1,\dots,x_\kappa,\ y_1,\dots,y_\kappa,\ z_1,\dots,z_q,
\qquad
q=m-2\kappa,
\]
such that
\[
\Delta(x_i)=y_i,
\qquad
\Delta(y_i)=0,
\qquad
\Delta(z_a)=0.
\]
Set
\[
M_i:=\Span(x_i,y_i),
\qquad
T:=\Span(z_1,\dots,z_q).
\]
Then
\[
V^\vee=\bigoplus_{i=1}^{\kappa}M_i\oplus T.
\]
Accordingly,
\[
\Sym^2(V^\vee)
=
\bigoplus_i\Sym^2(M_i)
\oplus
\bigoplus_{i<j}M_i\otimes M_j
\oplus
\bigoplus_i M_i\otimes T
\oplus
\Sym^2(T).
\]

We compute the image of \(\Delta\) on each summand. Since
\[
\Delta(fg)=\Delta(f)g+f\Delta(g)+\Delta(f)\Delta(g)
\]
for linear forms \(f,g\), we have on \(\Sym^2(M_i)\):
\[
\Delta(x_i^2)=y_i^2,\qquad
\Delta(x_i y_i)=y_i^2,\qquad
\Delta(y_i^2)=0.
\]
Thus the image is \(\Span(y_i^2)\), and has dimension \(1\).

For \(i<j\), on \(M_i\otimes M_j\) we have
\[
\Delta(x_i x_j)
=
x_i y_j+y_i x_j+y_i y_j,
\]
\[
\Delta(x_i y_j)=y_i y_j,\qquad
\Delta(y_i x_j)=y_i y_j,\qquad
\Delta(y_i y_j)=0.
\]
Hence the image is spanned by
\[
y_i y_j
\quad\text{and}\quad
x_i y_j+y_i x_j+y_i y_j,
\]
which are linearly independent. Thus the image dimension is \(2\).

On \(M_i\otimes T\), for \(1\le a\le q\),
\[
\Delta(x_i z_a)=y_i z_a,\qquad
\Delta(y_i z_a)=0.
\]
Therefore the image is \(\Span(y_i z_1,\dots,y_i z_q)\), and has dimension \(q\).
Finally, \(\Delta\) vanishes on \(\Sym^2(T)\), so the image dimension there is \(0\).

Adding these contributions gives
\[
\Omega_2(V)
=
\kappa
+
2\binom{\kappa}{2}
+
\kappa q
=
\kappa+\kappa(\kappa-1)+\kappa(m-2\kappa).
\]
Since \(q=m-2\kappa\), this simplifies to \(\Omega_2(V) = \kappa(m-\kappa)\).
\end{proof}

\begin{lemma}\label{lem:c2r1}
For a nontrivial involution in characteristic \(2\) acting on \(V\), with \(m=\dim
V\ge4\), one has
\begin{equation}\label{eq:c2om2lb}
\Omega_2(V)\ge m-1
\end{equation}
and
\begin{equation}\label{eq:c2om3lb}
\Omega_3(V)\ge 1+\binom m2.
\end{equation}
\end{lemma}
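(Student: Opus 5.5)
For the first inequality \eqref{eq:c2om2lb} I will invoke \Cref{lem:c2qimg}, which gives \(\Omega_2(V)=\kappa(m-\kappa)\). The function \(\kappa\mapsto\kappa(m-\kappa)\) is concave on \([1,m/2]\), and the nontriviality of the involution gives \(1\le\kappa\le m/2\). Its minimum over this range is therefore attained at \(\kappa=1\), where it equals \(m-1\). This disposes of \eqref{eq:c2om2lb}.

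For \eqref{eq:c2om3lb} I will reduce to the case \(\kappa=1\) and then compute explicitly. The reduction rests on the observation that \(\Omega_3(V)\) is monotone in \(\kappa\). I would not try to prove full monotonicity. It suffices to note that for any \(\kappa\ge1\) the basis of \Cref{lem:c2qimg} contains \(x_1,y_1\) with \(\Delta x_1=y_1\), together with \(m-2\) further basis vectors. Among these, I would restrict attention to monomials that are linear in \(x_1\) with the remaining factors chosen from a complementary set.

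Concretely, I use the basis \(x_1,y_1,w_1,\dots,w_{m-2}\), where the \(w\)'s are the remaining basis vectors. Consider the subspace
\[
P:=x_1\cdot\Sym^2\Span(y_1,w_1,\dots,w_{m-2})\oplus k\,x_1^3 \subset S_3.
\]
Every monomial \(x_1\mu\) with \(\mu\) free of \(x_1\) satisfies \(\Delta(x_1\mu)=y_1 g(\mu)+x_1\Delta(\mu)\), since \(g(x_1\mu)=(x_1+y_1)g(\mu)\). I grade by the \(x_1\)-degree, which \(\Delta\) can only preserve or lower, and project onto the \(x_1\)-free part. Under this projection the component of \(\Delta(x_1\mu)\) is \(y_1g(\mu)\), and \(y_1g(\cdot)\) is injective on \(\Sym^2\) of the complement because \(g\) is invertible and multiplication by \(y_1\) is injective. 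This yields injectivity of \(\Delta\) on \(x_1\Sym^2(\cdots)\), contributing \(\binom{m}{2}\) independent images.

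To obtain the extra \(+1\), I add \(x_1^3\). We have \(\Delta(x_1^3)=(x_1+y_1)^3-x_1^3=x_1^2y_1+x_1y_1^2+y_1^3\). Its \(x_1^2\)-component is nonzero, while the images of \(x_1\Sym^2\) have \(x_1\)-degree at most \(1\). Hence the images stay linearly independent, and \(\dim\Delta(P)\ge\binom m2+1\).

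The main obstacle is making this triangularity argument rigorous for general \(\kappa\). When \(\kappa>1\), the operator \(\Delta\) acts nontrivially on \(\mu\) through the other \(x_i\). Even so, the \(x_1\)-degree filtration remains compatible: \(\Delta\) does not increase \(x_1\)-degree, and the leading term in \(x_1\)-degree \(0\) is \(y_1g(\mu)\), regardless of \(\kappa\). I would write this out carefully, using that \(g\) preserves the span of the basis vectors other than \(x_1\). This holds because \(g(x_i)=x_i+y_i\) and \(g\) fixes \(y_i,z_a\), so the span of all basis vectors except \(x_1\) is \(g\)-stable.
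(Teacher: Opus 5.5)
Your argument is correct and uses the same mechanism as the paper. You isolate one Jordan block $\Delta x_1=y_1$ and detect image vectors through the component $y_1\,g(f)$ of $\Delta(x_1f)$. This is injective in $f$ because $g$ is invertible and multiplication by $y_1$ is injective in a polynomial ring. What differs is how you organize the count.

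The paper takes a $g$-stable complement $W$ of $M=\Span(x,y)$. It splits $S_3$ into the $g$-stable summands $\Sym^3M$, $\Sym^2M\otimes W$, $M\otimes\Sym^2W$, $\Sym^3W$, so image ranks simply add. Its test vectors are $x^3,x^2y$, then $x^2W$, then $x\Sym^2W$, giving $2+(m-2)+\binom{m-1}{2}$.

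You instead use the $g$-stable hyperplane $U=ky_1\oplus W$ and the grading of $S_3$ by $x_1$-degree. That grading is not $g$-stable, but it yields a $\Delta$-compatible filtration. Your test vectors are $x_1y_1^2$, $x_1y_1W$, $x_1\Sym^2W$ (all at once as $x_1\Sym^2U$) together with $x_1^3$. Independence comes from a leading-term argument, giving $\binom{m}{2}+1$.

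Your version replaces three separate summand computations by one injectivity statement, namely $\mu\mapsto y_1g(\mu)$ on $\Sym^2U$. The paper's version avoids any triangularity argument because its pieces are $\Delta$-stable.

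Since your argument works uniformly for every $\kappa$, the announced reduction to $\kappa=1$ via monotonicity in $\kappa$ is superfluous and can be deleted.

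One small imprecision in the first part: concavity of $\kappa(m-\kappa)$ only places the minimum at an endpoint of $[1,m/2]$. It is the fact that the function increases up to its vertex $m/2$ that puts the minimum at $\kappa=1$.
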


\begin{proof}
The bound for \(\Omega_2(V)\) follows from \eqref{eq:c2om2}:
\(\Omega_2(V)=\kappa(m-\kappa)\). Since \(1\le \kappa\le \frac m2\), the function
\(\kappa(m-\kappa)\) is increasing on the interval \([1,m/2]\). Hence \(\Omega_2(V)\ge
m-1\). It remains to prove the lower bound for \(\Omega_3(V)\). Choose one Jordan block
\(
M=\Span(x,y),
\
\Delta x=y,
\
\Delta y=0,
\)
and choose a \(g\)-stable complement \(W\), so that \(V^\vee=M\oplus W\), \(\dim
W=m-2\). The degree-three piece decomposes as a direct sum of \(g\)-stable subspaces:
\[
S_3
=
\Sym^3M
\oplus
\Sym^2M\otimes W
\oplus
M\otimes\Sym^2W
\oplus
\Sym^3W.
\]
Thus the image dimension on \(S_3\) is at least the sum of the lower bounds on these
three displayed summands involving \(M\).

First, \(\dim\Img(\Delta|\Sym^3M)=2\). Indeed,
\[
\Delta(x^3)=x^2y+xy^2+y^3,
\qquad
\Delta(x^2y)=y^3,
\]
and these two image vectors are independent.

Next consider \(\Sym^2M\otimes W\). For \(w\in W\),
\[
\Delta(x^2w)
=(x+y)^2g(w)-x^2w
=x^2\Delta(w)+y^2g(w).
\]
After projecting to the direct summand \(y^2W\), this image is \(g(w)\). The map
\(w\mapsto g(w)\) is injective, so this summand contributes at least \(\dim W=m-2\) to the
image.

Finally, on \(M\otimes\Sym^2W\), for \(f\in\Sym^2W\),
\[
\Delta(xf)
=(x+y)g(f)-xf
=x\Delta(f)+yg(f).
\]
After projecting to \(y\Sym^2W\), this image is \(g(f)\). Since \(g\) acts invertibly on
\(\Sym^2W\), this summand contributes at least
\(\dim\Sym^2W=\binom{m-1}{2}\). Adding these three contributions gives
\(
\Omega_3(V)
\ge
2+(m-2)+\binom{m-1}{2}
=
1+\binom m2.
\)
\end{proof}

\begin{lemma}\label{lem:c2mon}
For every \(d\ge1\),
\begin{equation}\label{eq:c2mon}
\Omega_d(V)\le \Omega_{d+1}(V).
\end{equation}
\end{lemma}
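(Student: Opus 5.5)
The plan is to mimic the proof of \Cref{lem:ambmonp} verbatim, now in characteristic \(2\). The key input is a nonzero \(g\)-invariant linear form, that is, a nonzero vector \(z\in V^\vee\) with \(\Delta z=0\). It exists because \(\Delta\) acts on \(V^\vee\) as the transpose of \(N\) (up to the identification induced by \(g^*\)). Since \(\Delta^2=0\) and \(\Delta\neq0\) is nilpotent on the finite-dimensional space \(V^\vee\), its kernel is nonzero. Concretely, in the basis of \Cref{lem:c2qimg} one can take \(z=y_1\), which satisfies \(\Delta(y_1)=0\) and exists since \(\kappa\ge1\).

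Next we check that multiplication by \(z\), viewed as a map \(\mu_z\colon S_d\to S_{d+1}\), commutes with \(\Delta\). Since \(g^*\) is a ring automorphism of \(S\), we have \(g^*(zf)=g^*(z)\,g^*(f)=z\,g^*(f)\), because \(g^*z=z\). Hence \(\Delta(zf)=z\,g^*(f)-zf=z\,\Delta(f)\). This is the only place where invariance of \(z\) enters; note that the product formula \(\Delta(fh)=\Delta(f)h+f\Delta(h)+\Delta(f)\Delta(h)\) used in \Cref{lem:c2qimg} gives the same conclusion.

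Finally, \(\mu_z\) is injective because \(S\) is a polynomial ring, hence a domain, and \(z\neq0\). Because \(\mu_z\circ\Delta=\Delta\circ\mu_z\), the map \(\mu_z\) sends \(\Img(\Delta|S_d)\) into \(\Img(\Delta|S_{d+1})\), and it does so injectively. Therefore \(\Omega_d(V)\le\Omega_{d+1}(V)\).

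There is no real obstacle here. The only point needing a word of justification is the existence of the invariant linear form. This follows from nilpotency of \(\Delta\) on \(V^\vee\): because \(g=I+N\) is unipotent, the dual action is also unipotent.
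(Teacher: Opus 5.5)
Your proposal is correct and follows essentially the same route as the paper: multiplication by a nonzero \(g\)-invariant linear form \(z\) is injective (since \(S\) is a domain) and commutes with \(\Delta\), so it embeds \(\Img(\Delta|S_d)\) into \(\Img(\Delta|S_{d+1})\). You also supply details the paper leaves implicit, namely why such a \(z\) exists (\(\Delta\) is nilpotent on \(V^\vee\)) and why \(\Delta(zf)=z\,\Delta(f)\).
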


\begin{proof}
Choose a nonzero invariant linear form \(z\in \Ker(\Delta|S_1)\). Multiplication by
\(z\) gives an injective map \(S_d\hookrightarrow S_{d+1}\) commuting with \(\Delta\).
Therefore it sends \(\Img(\Delta|S_d)\) injectively into \(\Img(\Delta|S_{d+1})\), and
the claim follows.
\end{proof}

We shall also use the following refined Grassmannian estimate in the pure quadratic
case.

\begin{lemma}\label{lem:c2sqgr}
Let \(M\) be a finite-dimensional vector space with a square-zero endomorphism
\(\Delta\). Set
\[
E:=\dim M,
\qquad
\Omega_M:=\dim\Img\Delta.
\]
For \(s\ge1\), the locus \(\{U\in \Gr(s,M):\Delta U\subset U\}\) has dimension at most
\begin{equation}\label{eq:c2sqdim}
\max_{0\le a\le \lfloor s/2\rfloor}
\left\{
a(\Omega_M-a)+(s-a)(E-\Omega_M-s+a)
\right\}.
\end{equation}
Equivalently, its codimension in \(\Gr(s,M)\) is at least
\begin{equation}\label{eq:c2sqcod}
\min_{0\le a\le \lfloor s/2\rfloor}
\left\{
s(E-s)-a(\Omega_M-a)-(s-a)(E-\Omega_M-s+a)
\right\}.
\end{equation}
\end{lemma}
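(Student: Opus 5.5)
We stratify the $\Delta$-stable $s$-planes by a discrete invariant and bound each stratum separately. Put $K:=\Ker\Delta$ and $I:=\Img\Delta$, so $I\subset K$ (since $\Delta^2=0$), $\dim I=\Omega_M$ and $\dim K=E-\Omega_M$. For a $\Delta$-stable $U\in\Gr(s,M)$ set $B:=\Delta(U)$ and $a:=\dim B$. Then $B\subset U\cap I$. Moreover $\Delta|_U$ has kernel $U\cap K$ of dimension $s-a$ and $B\subset U\cap K$, so $a\le s-a$, that is, $0\le a\le\lfloor s/2\rfloor$. For each such $a$ let $\mathcal S_a$ be the locally closed locus of stable $U$ with $\dim\Delta(U)=a$. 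It suffices to show $\dim\mathcal S_a\le a(\Omega_M-a)+(s-a)(E-\Omega_M-s+a)$ and take the maximum; the codimension form \eqref{eq:c2sqcod} is then just $\dim\Gr(s,M)=s(E-s)$ minus this.

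To bound $\mathcal S_a$, we parametrize by the pair $(B,C)$ with $C:=U\cap K$, and we use the following dimension count.
\begin{enumerate}[label=\textup{(\roman*)}]
\item $B$ is an $a$-plane in $I$, giving at most $a(\Omega_M-a)$ parameters.
\item Given $B$, $C$ is an $(s-a)$-plane with $B\subset C\subset K$, giving $\dim\Gr(s-2a,K/B)=(s-2a)(E-\Omega_M-s+a)$ parameters.
\item Given $B$ and $C$, the plane $U$ is determined by $U/C$, an $a$-dimensional subspace of $\Delta^{-1}(B)/C$.
\end{enumerate}
The key observation for step (iii) is that $\Delta$ induces an isomorphism $U/C\to B$. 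So $U/C$ is the graph of a linear map $B\to \Delta^{-1}(B)/C$ lifting the inverse of $\Delta$. Concretely, choose a fixed linear section $\sigma: B\to M$ of $\Delta$, which exists because $B\subset I$. Then $U=C+\{\sigma(b)+\phi(b): b\in B\}$ with $\phi\in\Hom(B,K/C)$. The choices of $\phi$ form an affine space of dimension $a(\dim K-\dim C)=a(E-\Omega_M-s+a)$, and distinct $\phi$ give distinct $U$.

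Summing the three contributions,
\[
a(\Omega_M-a)+(s-2a)(E-\Omega_M-s+a)+a(E-\Omega_M-s+a)=a(\Omega_M-a)+(s-a)(E-\Omega_M-s+a),
\]
which is exactly the $a$-th term of \eqref{eq:c2sqdim}. This is a fibration count: $\mathcal S_a$ maps to the partial flag variety $\{B\subset C\}$, and the fiber over each point is the affine space of $\phi$. So the dimension estimate follows from the fiber-dimension theorem, with no genericity needed.

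The main point needing care is step (iii). First, the map $U\mapsto(B,C)$ must be well defined and its fiber must be exactly this affine space. Second, one must check that every $U=C+\mathrm{graph}$ so constructed is actually $\Delta$-stable with $\Delta(U)=B$ and $U\cap K=C$. Stability holds because $\Delta(\sigma(b)+\phi(b))=b\in B\subset C\subset U$; here $\phi(b)\in K$ is lifted arbitrarily, and the ambiguity is absorbed by $C$. The equality $U\cap K=C$ holds because a nonzero $b$ gives $\Delta(\sigma(b)+\phi(b))=b\ne0$. Since we only need an upper bound, it is enough that every stable $U$ arises this way, which is immediate from the construction.
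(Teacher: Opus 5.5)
Your proposal is correct and follows the paper's proof essentially verbatim. Your pair \((B,C)=(\Delta(U),\,U\cap\Ker\Delta)\) is the paper's \((A,B)\), and your affine space of graphs \(\phi\in\Hom(B,K/C)\) is exactly the paper's space of sections of \(\Delta^{-1}(A)/B\to A\). This yields the same three-term count \(a(\Omega_M-a)+(s-2a)(E-\Omega_M-s+a)+a(E-\Omega_M-s+a)\).
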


\begin{proof}
For a stable \(s\)-plane \(U\), set
\[
a:=\rk(\Delta|U),
\qquad
A:=\Delta(U),
\qquad
B:=U\cap\Ker\Delta.
\]
Then
\(
\dim A=a,
\qquad
\dim B=s-a,
\qquad
A\subset B\subset\Ker\Delta.
\)
In particular \(0\le a\le \lfloor s/2\rfloor\).

First choose \(A\subset\Img\Delta\), contributing \(a(\Omega_M-a)\) dimensions. Then
choose \(B\subset\Ker\Delta\) of dimension \(s-a\) containing \(A\). Equivalently, choose
\(B/A\) of dimension \(s-2a\) in \(\Ker\Delta/A\), whose dimension is
\(E-\Omega_M-a\); this contributes
\[
(s-2a)\bigl((E-\Omega_M-a)-(s-2a)\bigr)
=(s-2a)(E-\Omega_M-s+a)
\]
dimensions. Finally, for fixed \(A\) and \(B\), the choices of \(U\) are sections of the
induced surjection \(\Delta^{-1}(A)/B\longrightarrow A\). Its kernel has dimension
\[
\dim(\Ker\Delta/B)=(E-\Omega_M)-(s-a)=E-\Omega_M-s+a,
\]
so these sections contribute \(a(E-\Omega_M-s+a)\) dimensions. Adding these three
contributions gives
\[
a(\Omega_M-a)+(s-a)(E-\Omega_M-s+a).
\]
This proves the dimension bound, and the codimension bound follows by subtracting from
\linebreak\(\dim\Gr(s,M)\)\allowbreak \(=s(E-s)\).
\end{proof}

\begin{lemma}\label{lem:c2pqest}
Let \(M=S_2\), and let \(\Delta\) be induced by a nontrivial involution on \(V\). Let
\(s\ge3\), and assume \(m\ge s+2\). Then the locus of \(\Delta\)-stable \(s\)-planes in
\(S_2\) has codimension strictly larger than
\(
2\kappa(m-\kappa)=\dim\Cl_{\PGL(V)}(g).
\)
\end{lemma}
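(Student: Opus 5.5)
The plan is to apply \Cref{lem:c2sqgr} with \(M=S_2\) and the square-zero operator \(\Delta\) induced by \(g\). The inputs are
\[
E=\dim S_2=\binom{m+1}{2},\qquad
\Omega:=\Omega_2(V)=\kappa(m-\kappa)\quad\text{by \eqref{eq:c2om2}}.
\]
By \Cref{lem:c2orb}, the target is \(\dim\Cl_{\PGL(V)}(g)=2\kappa(m-\kappa)=2\Omega\). So, by \eqref{eq:c2sqcod}, it suffices to show that for every integer \(0\le a\le\lfloor s/2\rfloor\),
\[
f(a):=s(E-s)-a(\Omega-a)-(s-a)(E-\Omega-s+a)>2\Omega .
\]

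The first step is to simplify \(f(a)\). Expanding \((s-a)(E-\Omega-s+a)=(s-a)(E-\Omega)-(s-a)^2\) and collecting terms, I expect
\[
f(a)=(s-2a)\Omega+a\bigl(E-2s+2a\bigr),
\qquad\text{hence}\qquad
f(a)-2\Omega=(s-2-2a)\Omega+a\bigl(E-2s+2a\bigr).
\]
Since \(a\le s/2\), the coefficient \(2a+2-s\) of \(-\Omega\) is at most \(2\). This observation drives a case split on \(c:=2a+2-s\in\{\dots,0,1,2\}\).

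For \(a=0\), one gets \(f(0)-2\Omega=(s-2)\Omega>0\), because \(s\ge3\) and \(\Omega\ge1\). For \(a\ge1\) with \(c\le0\), the first term is nonnegative. The second term is positive because
\[
E-2s\ge \tfrac{m(m+1)}2-2(m-2)=\tfrac{m^2-3m+8}2>0,
\]
using \(s\le m-2\).

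The remaining cases are \(c=1\), i.e.\ \(s=2a+1\), and \(c=2\), i.e.\ \(s=2a\). Here I use \(\Omega=\kappa(m-\kappa)\le m^2/4\) together with \(E-2s+2a\ge (m^2-3m+8)/2\).
\begin{itemize}
\item If \(s=3\) (so \(a=1\), \(c=1\)), the condition is \(E-4>\Omega\). This holds since \(m\ge s+2=5\) gives \(m(m+1)/2-4>m^2/4\).
\item If \(a\ge2\) and \(c=1\), then \(a(E-2s+2a)\ge m^2-3m+8>m^2/4\ge\Omega\).
\item If \(c=2\), then \(s\ge4\) forces \(a\ge2\). Hence \(a(E-2s+2a)\ge m^2-3m+8>m^2/2\ge2\Omega\), which holds for all \(m\).
\end{itemize}
Together these give \(f(a)>2\Omega\) for every admissible \(a\), which proves the lemma.

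The main obstacle is the algebraic simplification of the codimension expression and recognising that \(2a\le s\) caps the negative coefficient at \(2\). Without that cap, a crude uniform bound over \(a\) fails for large \(s\).
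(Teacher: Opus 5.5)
Your proof is correct and follows the paper's route. You apply \Cref{lem:c2sqgr} with \(E=\binom{m+1}{2}\) and \(\Omega=\kappa(m-\kappa)\), reduce to checking \((s-2-2a)\Omega+a(E-2s+2a)>0\) for \(0\le a\le s/2\), and verify this inequality elementarily. The only difference is bookkeeping: the paper splits on the sign of \(B=E-2\Omega-2s\) and, when \(B<0\), bounds by the endpoint \(a=s/2\) to reach the single cubic inequality \(s^3-s^2-2s-8>0\), whereas you split on the coefficient \(s-2-2a\ge -2\) of \(\Omega\) and use \(E-2s\ge (m^2-3m+8)/2\), which replaces that cubic by a few short subcases, all of which check out.
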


\begin{proof}
By \eqref{eq:c2sqcod}, the codimension is bounded below by the minimum, over \(0\le
a\le\lfloor s/2\rfloor\), of
\[
s(E-s)-a(\Omega_M-a)-(s-a)(E-\Omega_M-s+a),
\]
where
\(
E=\binom{m+1}{2},
\qquad
\Omega_M=\Omega_2(V)=\kappa(m-\kappa)
\)
by \eqref{eq:c2om2}. Expanding the expression in \eqref{eq:c2sqcod} gives
\[
\begin{aligned}
&s(E-s)-a(\Omega_M-a)-(s-a)(E-\Omega_M-s+a) \\
&\quad =sE-s^2-a\Omega_M+a^2 \\
&\qquad -(s-a)E+(s-a)\Omega_M \\
&\qquad +(s-a)s-a(s-a) \\
&\quad =s\Omega_M+a(E-2\Omega_M-2s)+2a^2.
\end{aligned}
\]
Thus it is enough to prove, for every \(0\le a\le s/2\), that
\[
s\Omega_M+a(E-2\Omega_M-2s)+2a^2>2\Omega_M.
\]
Equivalently, we need
\[
(s-2)\Omega_M+a(E-2\Omega_M-2s)+2a^2>0.
\]
Set \(B:=E-2\Omega_M-2s\).

If \(B\ge0\), then
\[
(s-2)\Omega_M+aB+2a^2\ge (s-2)\Omega_M>0,
\]
because \(s\ge3\) and \(\Omega_M>0\).

Now assume \(B<0\). Since \(0\le a\le s/2\), we have \(aB\ge \frac{s}{2}B\). Therefore
\[
\begin{aligned}
(s-2)\Omega_M+aB+2a^2
&\ge
(s-2)\Omega_M+\frac{s}{2}(E-2\Omega_M-2s) \\
&=
\frac{sE}{2}-2\Omega_M-s^2.
\end{aligned}
\]
Using
\(
\Omega_M=\kappa(m-\kappa)\le \frac{m^2}{4},
\)
we obtain
\[
\begin{aligned}
\frac{sE}{2}-2\Omega_M-s^2
&\ge
\frac{s}{2}\binom{m+1}{2}-\frac{m^2}{2}-s^2 \\
&=
\frac{(s-2)m^2+sm-4s^2}{4}.
\end{aligned}
\]
For fixed \(s\ge3\), the numerator \((s-2)m^2+sm-4s^2\) is increasing in \(m>0\). Since
\(m\ge s+2\), it is at least its value at \(m=s+2\), namely
\[
(s-2)(s+2)^2+s(s+2)-4s^2
=
s^3-s^2-2s-8.
\]
This is positive for every \(s\ge3\): for \(s=3\) it equals \(4\), and
\[
\bigl((s+1)^3-(s+1)^2-2(s+1)-8\bigr)-\bigl(s^3-s^2-2s-8\bigr)
=3s^2+s-2>0
\]
for \(s\ge3\). Hence
\(
(s-2)\Omega_M+a(E-2\Omega_M-2s)+2a^2>0.
\)
Therefore the codimension is strictly larger than
\(
2\Omega_M=2\kappa(m-\kappa)=\dim\Cl_{\PGL(V)}(g).
\)
\end{proof}

\subsection{Quadratic initial blocks}

\begin{proposition}\label{prop:c2pq}
Let \(X\subset \PP^n_k\) be a general smooth complete intersection whose lowest-degree
block is \((2^r)\), with \(r\ge3,  n-r\ge1\). Then \(X\) admits no nontrivial
linear automorphism of order \(2\).
\end{proposition}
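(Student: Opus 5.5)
The plan follows the same incidence scheme as \Cref{prop:opeq}, now with the square-zero estimate \Cref{lem:c2pqest} replacing the cruder \eqref{eq:stgr}.

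We first reduce to finitely many classes. Nontrivial elements of order \(2\) in \(\PGL(V)\) lift, after rescaling, to \(g=I+N\) with \(N^2=0\). Their conjugacy classes are determined by \(\kappa=\rk N\), where \(1\le\kappa\le m/2\). So only finitely many classes occur, and by the finite-union part of \Cref{prop:fixed-incidence} it suffices to treat one fixed class.

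Next we reduce to the quadratic block. By \Cref{lem:fl}, any linear automorphism of \(X\) preserves the lowest-degree equation space \(U\in\Gr(r,S_2)\). The parameter space of the quadratic part is therefore an open subset of \(\Gr(r,S_2)\), and the higher-degree equations only impose further conditions. Since \(g=\id+\Delta\) on \(S_2\), the subspace \(U\) is \(g\)-stable exactly when \(\Delta U\subset U\).

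For the incidence estimate, consider
\[
\mathfrak I_g=\{(h,U)\in\Cl(g)\times\Gr(r,S_2)\mid h(U)=U\}.
\]
For fixed \(h\), the fiber is the locus of \(\Delta_h\)-stable \(r\)-planes. All \(h\) in the class give the same Jordan data on \(S_2\), so \Cref{lem:c2pqest} applies uniformly. We apply it with \(s=r\ge3\); its hypothesis \(m\ge r+2\) holds because \(n-r\ge1\). It shows each fiber has codimension strictly larger than \(2\kappa(m-\kappa)=\dim\Cl_{\PGL(V)}(g)\), by \Cref{lem:c2orb}. Hence
\[
\dim\mathfrak I_g\le\dim\Cl(g)+\max_h\dim(\text{fiber})<\dim\Gr(r,S_2).
\]
The projection of \(\mathfrak I_g\) to \(\Gr(r,S_2)\) is then contained in a proper closed subset, and the same holds after intersecting with the open locus of smooth complete intersections (the locus \(T^\circ\), or its quadratic part).

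The main obstacle is already resolved by \Cref{lem:c2pqest}; what remains is bookkeeping. First, the dimension comparison must be made in the Grassmannian of the quadratic block rather than in the full \(T^\circ\). When there are higher-degree equations, this is handled through \Cref{rem:mixed-reduction}: an automorphism-stable \(X\) forces its quadratic part into the bad locus, and the preimage of a proper closed subset under the dominant map \(T^\circ\to\Gr(r,S_2)\) is proper. Second, the conjugacy-class dimension must be computed in \(\PGL(V)\); this is consistent with \Cref{lem:c2orb}, since lifts with \(g^2=I\) are unique up to the finite choice of sign, which is trivial in characteristic \(2\). Taking the union over the finitely many \(\kappa\) completes the argument.
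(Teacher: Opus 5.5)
Your proposal is correct and follows the paper's proof. It uses \Cref{lem:fl} to force the quadratic block \(U\in\Gr(r,S_2)\) to be \(\Delta\)-stable, applies \Cref{lem:c2pqest} with \(s=r\) (valid since \(m\ge r+2\)) to get a codimension exceeding \(\dim\Cl_{\PGL(V)}(g)=2\kappa(m-\kappa)\), and concludes by a finite union over the classes indexed by \(\kappa\). Your extra bookkeeping — the dominant map from \(T^\circ\) to the quadratic Grassmannian, and the uniqueness of the normalized lift in characteristic \(2\) — only makes explicit what the paper's shorter proof leaves implicit.
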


\begin{proof}
By \Cref{lem:fl}, any linear automorphism of \(X\) preserves the lowest-degree equation
space \(U\in\Gr(r,S_2)\). If \(g\) is a nontrivial involution, then \(U\) must be
\(\Delta\)-stable. Since \(m=n+1\ge r+2\), \Cref{lem:c2pqest} shows that the locus of
such \(U\)'s has codimension strictly larger than \(\dim\Cl_{\PGL(V)}(g)\). Therefore
the stabilized locus for this conjugacy class is proper. Since there are only finitely
many involution classes, the claim follows.
\end{proof}

\begin{lemma}\label{lem:c2qq}
Assume \(m\ge4\). Let \(Q\in S_2\) be a smooth \(\Delta\)-invariant quadric, and set
\(R:=S/(Q)\). Then, for every \(d\ge3\),
\begin{equation}\label{eq:c2qq}
\dim\Img(\Delta|R_d)
\ge
\Omega_3(V)-m+\kappa.
\end{equation}
Consequently,
\begin{equation}\label{eq:c2qqcr}
\dim\Img(\Delta|R_d)
\ge
\frac{m^2-3m+4}{2}.
\end{equation}
Moreover,
\begin{equation}\label{eq:c2qqs}
\dim\Img(\Delta|R_d)>\Omega_2(V).
\end{equation}
\end{lemma}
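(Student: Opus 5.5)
The plan is to prove \eqref{eq:c2qq} in degree \(3\) by a direct analysis of the quotient map \(S_3\to R_3\). Monotonicity in \(d\) then carries the bound to all \(d\ge3\), and the two consequences follow from \Cref{lem:c2r1} together with elementary inequalities in \(m\) and \(\kappa\).

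\emph{Reduction to \(d=3\).} Since \(m\ge4\) and \(Q\) is smooth, \(Q\) is irreducible, so \(R=S/(Q)\) is a domain. I will run the argument of \Cref{lem:qmonp} here, since that lemma is stated in the order-\(p\) setting and I want to check it applies verbatim with \(e=2\). Pick a nonzero \(z\in\Ker(\Delta|S_1)\). It is nonzero in \(R_1\) because \(\deg Q=2\). Multiplication by \(z\) is injective on \(R\) and commutes with \(\Delta\), so \(\dim\Img(\Delta|R_d)\) is nondecreasing for \(d\ge2\). It therefore suffices to treat \(d=3\).

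\emph{Degree \(3\).} Because \(g\) is unipotent, it fixes the line \(\langle Q\rangle\) pointwise, so \(\Delta Q=0\). Hence \(\Delta(Q\ell)=Q\,\Delta\ell\) for linear forms \(\ell\), and \(QS_1=Q\cdot V^\vee\) is a \(g\)-stable subspace of dimension \(m\). The image of \(\Delta\) on \(R_3=S_3/QS_1\) is \((\Img\Delta|_{S_3}+QS_1)/QS_1\), so
\[
\dim\Img(\Delta|R_3)=\Omega_3(V)-\dim\bigl(\Img\Delta|_{S_3}\cap QS_1\bigr).
\]
Suppose \(Q\ell=\Delta F\). Then \(\Delta^2=0\) gives \(0=\Delta(Q\ell)=Q\,\Delta\ell\), and since \(S\) is a domain, \(\Delta\ell=0\). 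Thus the intersection lies in \(Q\cdot\Ker(\Delta|V^\vee)\), which has dimension \(m-\kappa\). This yields \eqref{eq:c2qq}.

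\emph{Consequences.} Inserting \eqref{eq:c2om3lb} and \(\kappa\ge1\) gives
\[
1+\binom m2-m+1=\frac{m^2-3m+4}{2},
\]
which is \eqref{eq:c2qqcr}. For \eqref{eq:c2qqs} the crude bound \(\Omega_2(V)\le m^2/4\) only gives a non-strict inequality when \(m=4\), so I keep the \(\kappa\)-dependence and use \eqref{eq:c2om2}. The required difference is
\[
h(\kappa):=1+\binom m2-m+\kappa-\kappa(m-\kappa)=\kappa^2-(m-1)\kappa+1+\binom m2-m .
\]
Its minimum over real \(\kappa\) is
\[
1+\binom m2-m-\frac{(m-1)^2}{4}=\frac{(m-1)(m-3)}{4},
\]
which is positive for \(m\ge4\). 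Hence \(h(\kappa)>0\), which proves \eqref{eq:c2qqs}.

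\emph{Main obstacle.} I expect the main obstacle to be the intersection estimate in degree \(3\). The crude bound \(\dim QS_1=m\) from \eqref{eq:qcr2} loses exactly \(\kappa\), and recovering it requires the square-zero property \(\Delta^2=0\) together with the invariance \(\Delta Q=0\). The monotonicity step needs only irreducibility of \(Q\), which holds because \(m\ge4\).
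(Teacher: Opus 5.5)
Your proof is correct and follows the paper's argument. In degree $3$ you bound $\Img(\Delta|_{S_3})\cap QS_1$ inside $Q\cdot\Ker(\Delta|_{S_1})$ using $\Delta^2=0$, $\Delta Q=0$ and injectivity of multiplication by $Q$; you then pass to all $d\ge3$ by monotonicity via an invariant linear form in the domain $R$, and finish with \eqref{eq:c2om3lb}. The only deviation is in \eqref{eq:c2qqs}: you minimize $h(\kappa)$ over real $\kappa$ to get the uniform bound $(m-1)(m-3)/4>0$, whereas the paper splits into $m\ge5$ (using $\kappa(m-\kappa)\le m^2/4$) and a direct check of $\kappa=1,2$ at $m=4$; this is a harmless streamlining.
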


\begin{proof}
First consider degree \(3\). Set \(C:=QS_1\subset S_3\). Since \(Q\) is
\(\Delta\)-invariant, \(C\) is \(\Delta\)-stable, and multiplication by \(Q\) identifies
\(S_1\) equivariantly with \(C\). Therefore
\[
\dim\Ker(\Delta|C)=\dim\Ker(\Delta|S_1)=m-\kappa.
\]

The image of \(\Delta\) on \(R_3=S_3/C\) has dimension
\[
\dim\Img(\Delta|R_3)
=
\Omega_3(V)-\dim\bigl(\Img(\Delta|S_3)\cap C\bigr).
\]
Since \(\Delta^2=0\), one has \(\Img(\Delta|S_3)\subset\Ker(\Delta|S_3)\). Hence
\(\Img(\Delta|S_3)\cap C\subset\Ker(\Delta|C)\). It follows that
\[
\dim\Img(\Delta|R_3)
\ge
\Omega_3(V)-(m-\kappa).
\]

Now pass to higher degrees. Since \(Q\) is smooth, \(R=S/(Q)\) is a domain. Choose a
nonzero invariant linear form \(z\in\Ker(\Delta|S_1)\). Its image in \(R_1\) is nonzero,
and multiplication by \(z\) gives injective maps \(R_e\hookrightarrow R_{e+1}\). Because
\(z\) is invariant, these maps commute with \(\Delta\). Hence
\(
\dim\Img(\Delta|R_e)\le \dim\Img(\Delta|R_{e+1})
\)
for every \(e\ge1\). Therefore, for every \(d\ge3\),
\[
\dim\Img(\Delta|R_d)
\ge
\dim\Img(\Delta|R_3)
\ge
\Omega_3(V)-m+\kappa.
\]

By \eqref{eq:c2om3lb}, \(\Omega_3(V)\ge 1+\binom m2\). Since \(\kappa\ge1\), we get
\[
\dim\Img(\Delta|R_d)
\ge
\left(1+\binom m2\right)-m+1
=
\frac{m^2-3m+4}{2}.
\]

It remains to prove the final assertion. If \(m\ge5\), then
\[
\frac{m^2-3m+4}{2}-\frac{m^2}{4}
=
\frac{(m-2)(m-4)}4>0,
\]
and \(\kappa(m-\kappa)\le m^2/4\). Hence, by \eqref{eq:c2om2},
\[
\frac{m^2-3m+4}{2}>\frac{m^2}{4}\ge \kappa(m-\kappa)=\Omega_2(V).
\]
If \(m=4\), then \(\kappa=1\) or \(\kappa=2\). By
\eqref{eq:c2om3lb},
\[
\dim\Img(\Delta|R_d)
\ge
\left(1+\binom42\right)-4+\kappa
=3+\kappa.
\]
For \(\kappa=1\), this is \(4>3=\Omega_2(V)\), while for \(\kappa=2\), this is
\(5>4=\Omega_2(V)\). Thus \[\dim\Img(\Delta|R_d)>\Omega_2(V)\ for\ all \ m\ge4.\]
\end{proof}

\begin{proposition}\label{prop:c2mixq}
Let \(X\subset \PP^n_k\) be a general smooth complete intersection whose multidegree
begins with
\[
(2,b^s),
\qquad
b\ge3,
\qquad
s\ge1,
\qquad
n-(1+s)\ge1.
\]
Then \(X\) admits no nontrivial linear automorphism of order \(2\).
\end{proposition}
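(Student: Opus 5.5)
We follow the two-step incidence count used in \Cref{prop:opmix}, now with the characteristic-\(2\) inputs. Fix a nontrivial involution class, with normalized lift \(g=I+N\), \(N^2=0\), \(\rk N=\kappa\). By \Cref{lem:c2orb} its conjugacy class has dimension \(D=2\kappa(m-\kappa)=2\Omega_2(V)\), using \eqref{eq:c2om2}. Since there are finitely many classes, by \Cref{prop:fixed-incidence} it suffices to show that the codimension of the stabilized locus, for fixed \(g\), exceeds \(D\).

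\emph{First step (the quadric).} By \Cref{lem:fl}, the line \(\langle Q\rangle\subset S_2\) is preserved. Because \(g\) is unipotent, \(Q\) is then \(g\)-fixed, i.e.\ \(Q\in\Ker(\Delta|S_2)\). This imposes codimension exactly \(\Omega_2(V)\) on \(\mathbb P(S_2)\). We restrict to the open locus where \(Q\) is smooth, and set \(R=S/(Q)\).

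\emph{Second step (degree \(b\)).} By \Cref{lem:new,prop:bt} and \Cref{rem:mixed-reduction}, once \(Q\) is fixed, the degree-\(b\) minimal equation space is a general \(s\)-plane \(U\subset R_b\). It must satisfy \(\Delta U\subset U\). Set \(\Omega_b(R)=\dim\Img(\Delta|R_b)\). Then \eqref{eq:stgr} gives codimension at least \(s\Omega_b(R)-s(s-1)\). By \eqref{eq:c2qqcr}, \(\Omega_b(R)\ge (m^2-3m+4)/2\ge m-1\ge s\), where the last inequality uses \(s\le m-3\), which follows from \(n-(1+s)\ge1\). Hence
\[
s\Omega_b(R)-s(s-1)-\Omega_b(R)=(s-1)(\Omega_b(R)-s)\ge0,
\]
so the second step contributes at least \(\Omega_b(R)\).

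\emph{Conclusion.} The total codimension is at least \(\Omega_2(V)+\Omega_b(R)\). By \eqref{eq:c2qqs}, this is strictly larger than \(2\Omega_2(V)=D\). Fibering the incidence first over the \(g\)-fixed smooth quadrics and then over the stable \(s\)-planes in \(R_b\), exactly as in \Cref{rem:2step}, gives \(\dim\mathfrak I_g<\dim T^\circ\) for the relevant parameter space of pairs \((\langle Q\rangle,U)\). Any further higher-degree equations only impose additional conditions. Taking the finite union over classes via \Cref{prop:fixed-incidence} finishes the argument.

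The main point requiring care is the fibration bookkeeping. The parameter space is a Grassmannian bundle over the open set of smooth quadrics, and the codimension of the second condition must be uniform over the fixed quadrics. This holds because the bound \eqref{eq:c2qqcr} depends only on \(m\) and \(\kappa\), not on the particular invariant smooth \(Q\). The inequality itself is already supplied by \Cref{lem:c2qq}.
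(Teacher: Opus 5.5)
Your proposal is correct and follows essentially the same route as the paper. The quadric line must be \(g\)-fixed, contributing \(\Omega_2(V)\); then \eqref{eq:stgr} applied to the \(\Delta\)-stable \(s\)-plane in \(R_b\) contributes at least \(\Omega_b(R)\), and \eqref{eq:c2qqs} pushes the total above \(2\Omega_2(V)=2\kappa(m-\kappa)\). The only difference is cosmetic: your uniform check \((m^2-3m+4)/2\ge m-1\ge s\), valid since \((m-2)(m-3)\ge 0\), replaces the paper's separate treatment of \(m=4\).
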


\begin{proof}
It is enough to work over the nonempty open locus where the unique
quadratic minimal equation defines a smooth quadric. Let this quadric be defined by
\(Q\in S_2\). The condition \(n-(1+s)\ge1\) gives \(s\le m-3\). By \Cref{lem:fl}, the
line \(\langle Q\rangle\) is preserved by every linear automorphism of \(X\). Since an
order-\(2\) unipotent action on a one-dimensional vector space is trivial, \(Q\) is
\(\Delta\)-invariant.

The first equation contributes codimension \(\Omega_2(V)=\kappa(m-\kappa)\), by
\eqref{eq:c2om2}. Set
\[
R:=S/(Q),
\qquad
\Omega_b(R):=\dim\Img(\Delta|R_b).
\]
By \Cref{lem:new,prop:bt}, the next minimal equation space is a general \(s\)-plane in
\(R_b\), and it must be \(\Delta\)-stable. By \eqref{eq:c2qqs},
\(\Omega_b(R)>\Omega_2(V)\). Moreover \(\Omega_b(R)\ge s\). Indeed, if \(m\ge5\), then
\[
\Omega_b(R)\ge \frac{m^2-3m+4}{2}\ge m-3\ge s,
\]
where the first inequality is \eqref{eq:c2qqcr} and the middle inequality follows from
\((m^2-3m+4)/2-(m-3)=(m^2-5m+10)/2>0\). If \(m=4\), then \(s\le1\) and
the inequality \(\Omega_b(R)>\Omega_2(V)\) implies \(\Omega_b(R)\ge1=s\).

By \eqref{eq:stgr}, the stable-locus condition for the degree-\(b\) \(s\)-plane has
codimension at least \(s\Omega_b(R)-s(s-1)\). Since \(\Omega_b(R)\ge s\),
\[
s\Omega_b(R)-s(s-1)-\Omega_b(R)=(s-1)(\Omega_b(R)-s)\ge0,
\]
so this is at least \(\Omega_b(R)\). Therefore the total codimension is strictly larger
than
\[
\Omega_2(V)+\Omega_2(V)
=
2\kappa(m-\kappa)
=
\dim\Cl_{\PGL(V)}(g),
\]
where the last equality is \eqref{eq:c2orb}. Thus the stabilized locus for this
conjugacy class is proper.
\end{proof}

\begin{proposition}\label{prop:c222}
Let \(X\subset \PP^n_k\) be a general smooth complete intersection whose multidegree
begins with
\[
(2,2,b^s),
\qquad
b\ge3,
\qquad
s\ge1,
\qquad
n-(2+s)\ge1.
\]
Then \(X\) admits no nontrivial linear automorphism of order \(2\).
\end{proposition}

\begin{proof}
It is enough to work over the nonempty open locus where the
two-dimensional quadratic equation space cuts out a smooth complete
intersection of two quadrics. Let
\(
\Lambda\subset S_2
\)
be this quadratic equation space, and set
\(
Y:=V(\Lambda)\subset \PP^n.
\)
By \Cref{lem:fl}, every linear automorphism of \(X\) preserves
\(\Lambda\), and hence preserves \(Y\).

By the finiteness statement recalled in the incidence setup,
\(\Aut_L(Y)\) is finite. By \Cref{prop:bt}, after fixing \(Y\), the
degree-\(b\) minimal equation spaces giving smooth extensions contain a
nonempty open subset of
\[
\Gr\bigl(s,R_b(Y)\bigr),
\qquad
R_b(Y):=H^0(Y,\mathcal O_Y(b)).
\]
Moreover,
\[
s<\dim R_b(Y).
\]
Indeed, \(Y\) is a positive-dimensional smooth complete intersection,
hence integral. If \(0\neq z\in R_1(Y)\), multiplication by \(z^{b-1}\)
gives an injection
\(
R_1(Y)\hookrightarrow R_b(Y),
\)
and therefore\\
\(
\dim R_b(Y)\ge n+1>s.
\)

Let \(T(Y)\) be the finite set of nontrivial involutions in
\(\Aut_L(Y)\). We claim that every \(g\in T(Y)\) acts non-scalarly on
\(R_b(Y)\). Write
\(
g=1+\Delta,
\
\Delta^2=0.
\)
Since \(g\) is nontrivial in \(\PGL(V)\), its action on
\(R_1(Y)\simeq H^0(Y,\mathcal O_Y(1))\) is nontrivial. Choose \(u\) with
\(
v:=\Delta(u)\neq0.
\)
Then \(\Delta(v)=0\), and hence
\(
\Delta(uv^{b-1})=v^b.
\)
Since \(R(Y)\) is a domain, \(v^b\neq0\). Thus \(\Delta\) is nonzero on
\(R_b(Y)\), so \(g\) does not act as a scalar there.

By \Cref{lem:kill}, for every fixed \(Y\), a general
\(s\)-plane in \(R_b(Y)\) is stabilized by no element of \(T(Y)\).
Finally, the relative linear automorphism incidence is finite over the
smooth two-quadric locus. Hence the corresponding invariant-subspace
incidence in the relative Grassmannian has dimension strictly smaller
than that of the whole relative Grassmannian. Together with
\Cref{prop:bt}, this shows that a general smooth extension admits no
nontrivial involution.
\end{proof}

\subsection{Nonquadratic initial blocks}

\begin{proposition}\label{prop:c2eqnq}
Let \(X\subset \PP^n_k\) be a general smooth complete intersection whose lowest-degree
block is \((a^r)\), with \(a\ge3,  r\ge2, n-r\ge1\). Then \(X\) admits no
nontrivial linear automorphism of order \(2\).
\end{proposition}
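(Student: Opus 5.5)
We follow the pattern of \Cref{prop:opeq} and \Cref{prop:c2pq}. Fix a nontrivial involution class, represented by $g=I+N$ with $N^2=0$ and $\kappa=\rk N$. By \Cref{lem:fl}, every linear automorphism of $X$ preserves the lowest-degree equation space $U\in\Gr(r,S_a)$. Hence $U$ must be $\Delta$-stable. By \Cref{lem:stgr}, the locus of such $U$ has codimension at least $r\Omega_a(V)-r(r-1)$ in $\Gr(r,S_a)$. By \Cref{prop:fixed-incidence}, it therefore suffices to prove
\[
r\Omega_a(V)-r(r-1)>2\kappa(m-\kappa)=\dim\Cl_{\PGL(V)}(g),
\]
using \eqref{eq:c2orb}. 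Since only finitely many involution classes exist (they are indexed by $\kappa$), the finite-union part of \Cref{prop:fixed-incidence} then finishes the proof.

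By the monotonicity \eqref{eq:c2mon}, we have $\Omega_a(V)\ge\Omega_3(V)\ge 1+\binom m2$ by \eqref{eq:c2om3lb}. We also have $2\kappa(m-\kappa)\le m^2/2$.

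\emph{Case $r=2$.} We need $2\Omega_3(V)-2>2\kappa(m-\kappa)$. It suffices that
\[
2+m(m-1)-2>\frac{m^2}{2},
\]
that is, $m^2/2-m>0$, which holds for $m\ge4$. The case $m=4$, $\kappa=2$ gives $12>8$ directly.

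\emph{Case $r\ge3$.} Now $m=n+1\ge r+2$. We split
\[
r\Omega_3-r(r-1)-D=(r-2)\Omega_3+(2\Omega_3-D)-r(r-1).
\]
The term $2\Omega_3-D$ is at least $2$ by the computation above. Also
\[
(r-2)\Omega_3\ge (r-2)\Bigl(1+\binom{r+2}{2}\Bigr),
\]
which grows cubically in $r$ and dominates $r(r-1)$ for $r\ge3$. For instance, at $r=3$ we get $1\cdot 11+2-6>0$, and the difference is increasing in $r$.

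The only real input is the lower bound \eqref{eq:c2om3lb} together with monotonicity; the remaining steps are elementary inequalities. The main point to check is that $\Omega_3(V)$ beats $\kappa(m-\kappa)$ with room to spare. This is clear here, since $\binom m2\ge m^2/4+1$ for $m\ge4$, so the argument has no genuine obstacle, unlike the quadratic case, which needed the refined \Cref{lem:c2sqgr}.
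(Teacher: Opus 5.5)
Your proof is correct and follows the paper's argument: \Cref{lem:fl}, the codimension bound \eqref{eq:stgr}, the monotonicity \eqref{eq:c2mon} combined with \eqref{eq:c2om3lb}, and $\dim\Cl_{\PGL(V)}(g)=2\kappa(m-\kappa)\le m^2/2$. The only difference is cosmetic. The paper reduces every $r$ with $2\le r\le m-2$ to $r=2$ by noting that $r\Omega_a(V)-r(r-1)$ increases in $r$ (its increment $\Omega_a(V)-2r$ is positive). You instead treat $r\ge3$ with the splitting $(r-2)\Omega_3+(2\Omega_3-D)-r(r-1)$ borrowed from \Cref{prop:opeq}, and your final ``increasing in $r$'' claim is easily checked.
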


\begin{proof}
By \Cref{lem:fl}, any linear automorphism of \(X\) preserves the lowest-degree equation
space \(U\in\Gr(r,S_a)\). If \(g\) is a nontrivial involution, then \(U\) must be
\(\Delta\)-stable. By \eqref{eq:stgr}, the codimension of the stable locus is at least
\(r\Omega_a(V)-r(r-1)\). By \eqref{eq:c2mon} and \eqref{eq:c2om3lb},
\[
\Omega_a(V)\ge \Omega_3(V)\ge 1+\binom m2.
\]
For \(2\le r\le m-2\), the function \(f(r):=r\Omega_a(V)-r(r-1)\) is increasing in
\(r\). Indeed,
\[
f(r+1)-f(r)=\Omega_a(V)-2r
\ge \left(1+\binom m2\right)-2(m-2)>0
\]
for \(m\ge4\). Hence it is minimized at \(r=2\). Therefore
\[
r\Omega_a(V)-r(r-1)
\ge
2\Omega_3(V)-2
\ge
m(m-1).
\]
On the other hand,
\(
\dim\Cl_{\PGL(V)}(g)=2\kappa(m-\kappa)\le \frac{m^2}{2}
\)
by \eqref{eq:c2orb}. Since \(m\ge4\), \(m(m-1)>\frac{m^2}{2}\). Thus the stabilized
locus for this conjugacy class is proper.
\end{proof}

\begin{lemma}\label{lem:c2hyp}
Let \(F\in S_a\), \(a\ge3\), be a nonzero \(\Delta\)-invariant irreducible form, and set
\(R:=S/(F)\). Then, for every \(b>a\),
\begin{equation}\label{eq:c2hyp}
\dim\Img(\Delta|R_b)\ge \Omega_a(V)-1.
\end{equation}
\end{lemma}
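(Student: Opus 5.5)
The plan is to combine two elementary facts: the image rank drops by at most one when passing from \(S_a\) to the quotient \(R_a=S_a/\langle F\rangle\), and the image rank is nondecreasing in the degree once we are in the domain \(R\). This mirrors the proofs of \eqref{eq:qcre} and \eqref{eq:qmonp}, now in characteristic \(2\).

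\emph{Step 1: degree \(a\).} Because \(F\) is \(\Delta\)-invariant, the line \(\langle F\rangle\subset S_a\) is \(\Delta\)-stable. Hence \(\Delta\) descends to \(R_a=S_a/\langle F\rangle\). The image of \(\Delta\) on \(R_a\) is the image of \(\Img(\Delta|S_a)\) under the quotient map, and the kernel of that quotient map is one-dimensional. Therefore
\[
\dim\Img(\Delta|R_a)\ge \Omega_a(V)-1.
\]

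\emph{Step 2: monotonicity in \(R\).} Since \(F\) is irreducible, \(R=S/(F)\) is a domain. Choose a nonzero invariant linear form \(z\in\Ker(\Delta|S_1)\); one exists because \(\Delta\) is nilpotent on \(S_1\). Since \(\deg F=a\ge3>1\), the image of \(z\) in \(R_1\) is nonzero. Multiplication by \(z\) is then an injective map \(R_e\hookrightarrow R_{e+1}\). It commutes with \(\Delta=g^*-\id\), because \(g^*(zf)=z\,g^*(f)\) and so \(\Delta(zf)=z\Delta(f)\). Consequently it maps \(\Img(\Delta|R_e)\) injectively into \(\Img(\Delta|R_{e+1})\). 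Iterating from \(e=a\) up to \(b-1\) gives
\[
\dim\Img(\Delta|R_b)\ge\dim\Img(\Delta|R_a)\ge\Omega_a(V)-1,
\]
which is \eqref{eq:c2hyp}.

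I expect no real obstacle. The only points to check are that \(z\) survives in \(R_1\), which holds because \(a\ge 2\), and that the multiplication map is equivariant, so that it carries images of \(\Delta\) into images of \(\Delta\). Both are immediate. The argument is the same as the proof of \Cref{lem:qmonp}, with \(e=a\).
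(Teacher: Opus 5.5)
Your proposal is correct and follows the paper's proof essentially verbatim. You use the same two ingredients: the image rank drops by at most one in passing from \(S_a\) to \(R_a=S_a/\langle F\rangle\), and multiplication by a nonzero invariant linear form \(z\), which survives in the domain \(R\), gives \(\Delta\)-equivariant injections \(R_t\hookrightarrow R_{t+1}\). The only difference is that you present these two steps in the opposite order, which is immaterial.
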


\begin{proof}
Since \(F\) is irreducible, \(R\) is a domain. Choose a nonzero invariant linear form\\
\(z\in\Ker(\Delta|S_1)\). The image of \(z\) in \(R_1\) is nonzero, because \(a\ge3\).
Hence multiplication by \(z\) gives injective maps \(R_t\hookrightarrow R_{t+1}\)
commuting with \(\Delta\). Therefore the image ranks on \(R_t\) are nondecreasing.

In degree \(a\), the quotient map
\(
S_a\longrightarrow R_a=S_a/\langle F\rangle
\)
can reduce the image rank by at most \(1\). Hence
\[
\dim\Img(\Delta|R_b)
\ge
\dim\Img(\Delta|R_a)
\ge
\Omega_a(V)-1.
\]
\end{proof}

\begin{proposition}\label{prop:c2mixnq}
Let \(X\subset \PP^n_k\) be a general smooth complete intersection whose multidegree
begins with
\[
(a,b^s),
\qquad
3\le a<b,
\qquad
s\ge1,
\qquad
n-(1+s)\ge1.
\]
Then \(X\) admits no nontrivial linear automorphism of order \(2\).
\end{proposition}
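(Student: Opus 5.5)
We follow the two-step incidence count used for \Cref{prop:opmix} and \Cref{prop:c2mixq}. We fix one of the finitely many nontrivial involution classes, written \(g=I+N\) with \(N^2=0\) and \(\rk N=\kappa\). We work over the nonempty open locus where the degree-\(a\) equation \(F\) is smooth, hence irreducible since \(m\ge4\). By \Cref{lem:fl} the line \(\langle F\rangle\subset S_a\) is preserved. A unipotent action on a line is trivial, so \(F\) is \(\Delta\)-invariant. This first condition therefore contributes codimension \(\Omega_a(V)\), the codimension of \(\mathbb P(\Ker\Delta|S_a)\) in \(\mathbb P(S_a)\).

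Next we set \(R:=S/(F)\). By \Cref{lem:new,prop:bt}, the degree-\(b\) minimal equation space is a general \(s\)-plane in \(R_b\), and it must be \(\Delta\)-stable. \Cref{lem:c2hyp} gives \(\Omega_b(R):=\dim\Img(\Delta|R_b)\ge \Omega_a(V)-1\). The hypothesis \(n-(1+s)\ge1\) gives \(s\le m-3\). By \eqref{eq:c2mon} and \eqref{eq:c2om3lb},
\[
\Omega_b(R)\ge \Omega_3(V)-1\ge \binom m2\ge m-3\ge s .
\]
Then \eqref{eq:stgr}, combined with the identity \(s\Omega_b(R)-s(s-1)-\Omega_b(R)=(s-1)(\Omega_b(R)-s)\ge0\), shows that the second condition contributes codimension at least \(\Omega_b(R)\).

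The total codimension is thus at least
\[
\Omega_a(V)+\Omega_a(V)-1\ge 2\Omega_3(V)-1\ge 2\binom m2+1=m^2-m+1 .
\]
We compare this with \(\dim\Cl_{\PGL(V)}(g)=2\kappa(m-\kappa)\le m^2/2\) from \eqref{eq:c2orb}. Since \(m^2-m+1>m^2/2\) for all \(m\ge2\), the incidence locus has dimension smaller than the parameter space. \Cref{prop:fixed-incidence}, applied as in \Cref{rem:2step} with the fixed-\(g\) count added to the class dimension, then shows that the stabilized locus for this class is proper. There are finitely many involution classes, so the finite-union assertion finishes the proof.

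The main point requiring care is justifying the two-step dimension count over the family, namely that the fibration \(F\mapsto\) (admissible \(s\)-planes in \(R_b\)) behaves like a relative Grassmannian of constant dimension, so that the codimensions add. Here \(\dim R_b=\dim S_b-\dim S_{b-a}\) is constant, and \Cref{prop:bt} gives openness, so this step goes exactly as in \Cref{prop:c2mixq}. The numerical inequality itself has ample slack.
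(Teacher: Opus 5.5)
Your proposal is correct and follows the paper's proof essentially step for step. The shared steps are: $\Delta$-invariance of $F$ contributes $\Omega_a(V)$; \Cref{lem:c2hyp} with monotonicity gives $\Omega_b(R)\ge\Omega_a(V)-1\ge\binom m2\ge s$; \eqref{eq:stgr} bounds the second contribution below by $\Omega_b(R)$; and the comparison $2\Omega_3(V)-1\ge m(m-1)+1>m^2/2\ge 2\kappa(m-\kappa)$ finishes it. Your remark that $\dim R_b=\dim S_b-\dim S_{b-a}$ is constant, so the parameter space is a genuine Grassmannian bundle over the smooth-hypersurface locus, makes explicit a point the paper leaves implicit.
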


\begin{proof}
It is enough to work over the nonempty open locus where the unique
degree-\(a\) minimal equation defines a smooth hypersurface. Let this hypersurface be
defined by \(F\in S_a\). Then \(F\) is irreducible. The condition \(n-(1+s)\ge1\) gives
\(s\le m-3\). By \Cref{lem:fl}, the line \(\langle F\rangle\) is preserved by every
linear automorphism of \(X\). Since an order-\(2\) unipotent action on a one-dimensional
vector space is trivial, \(F\) is \(\Delta\)-invariant.

The first equation contributes codimension \(\Omega_a(V)\). Set
\[
R:=S/(F),
\qquad
\Omega_b(R):=\dim\Img(\Delta|R_b).
\]
By \Cref{lem:new,prop:bt}, the next minimal equation space is a general \(s\)-plane in
\(R_b\), and it must be \(\Delta\)-stable. By \eqref{eq:c2hyp}, \(\Omega_b(R)\ge
\Omega_a(V)-1\). Since, by \eqref{eq:c2mon} and \eqref{eq:c2om3lb},
\[
\Omega_a(V)\ge \Omega_3(V)\ge 1+\binom m2,
\]
we have \(\Omega_b(R)\ge \binom m2\ge s\). By \eqref{eq:stgr}, the stable-locus
condition for the degree-\(b\) \(s\)-plane has codimension at least
\(s\Omega_b(R)-s(s-1)\). Since \(\Omega_b(R)\ge s\),
\[
s\Omega_b(R)-s(s-1)-\Omega_b(R)=(s-1)(\Omega_b(R)-s)\ge0,
\]
so this is at least \(\Omega_b(R)\). Therefore the total codimension is at least
\[
\Omega_a(V)+\Omega_b(R)
\ge
2\Omega_a(V)-1
\ge
2\Omega_3(V)-1
\ge
m(m-1)+1.
\]
This is strictly larger than
\(
\dim\Cl_{\PGL(V)}(g)
=
2\kappa(m-\kappa)
\le
\frac{m^2}{2},
\)
where the equality is \eqref{eq:c2orb}. Thus the stabilized locus for this conjugacy
class is proper.
\end{proof}

\begin{proof}[Proof of \Cref{thm:c2}]
By
\Cref{prop:c2pq,prop:c222,prop:c2mixq,prop:c2eqnq,prop:c2mixnq},
all possible initial degree patterns are covered, except for the pure type
\((2,2)\), which is excluded by hypothesis. Hence a general complete
intersection in the stated range admits no nontrivial linear automorphism
of order \(2\).
\end{proof}
\section{Proof of the main theorem}\label{sec:proof}

\begin{proof}[Proof of \Cref{thm:main}]
By \Cref{prop:finite-reduction}, \(\Aut_L(X)\) is finite. Suppose it is
nontrivial. Then it contains an element of prime order \(\ell\).

If \(\operatorname{char}k=0\), or if
\(\operatorname{char}k=p>0\) and \(\ell\neq p\), this contradicts
\Cref{thm:ptp}. Thus it remains only to consider \(\ell=p\). If
\(p>2\), this is excluded by \Cref{thm:op}, while if \(p=2\), it is
excluded by \Cref{thm:c2}. Hence
\[
\Aut_L(X)=\{1\}.
\]
\end{proof}

\begin{proof}[Proof of Corollary~\ref{cor:lin}]
If \(\dim X\ge 3\), then by the Grothendieck--Lefschetz theorem the Picard group of a
smooth complete intersection is generated by the hyperplane class:\\ \(\Pic(X)\simeq
\ZZ\cdot \mathcal O_X(1)\). Hence every automorphism of \(X\) preserves \(\mathcal
O_X(1)\). Since \(X\) is linearly normal in \(\PP^n\), the restriction map
\[
H^0(\PP^n,\mathcal O_{\PP^n}(1))
\longrightarrow
H^0(X,\mathcal O_X(1))
\]
is an isomorphism. Therefore every automorphism of \(X\) is induced by a unique
projective linear transformation of the ambient projective space. Thus, in dimension at
least \(3\), one has \(\Aut(X)=\Aut_L(X)\).

For surfaces, by adjunction,
\[
K_X\simeq \mathcal O_X\Bigl(\sum_{i=1}^c d_i-n-1\Bigr).
\]
Since \(\Pic(X)\) is torsion-free, if this integer is nonzero, then preservation of the canonical class implies preservation of the hyperplane class. Hence the same argument shows that every automorphism is linear. The exceptional
surface case is the \(K3\) case, where \(\sum_{i=1}^c d_i=n+1\). In this case the
canonical bundle is trivial and no longer determines \(\mathcal O_X(1)\), so non-linear
automorphisms may occur and the comparison between \(\Aut(X)\) and \(\Aut_L(X)\) becomes
a separate issue.

Finally, in dimension \(1\), adjunction still gives
\[
K_X\simeq \mathcal O_X\Bigl(\sum_{i=1}^c d_i-n-1\Bigr).
\]
But the Picard group of a curve has a large degree-zero part, so the canonical class
does not generally determine the hyperplane class. Since \(\deg K_X=2g(X)-2\), the
comparison between \(\Aut(X)\) and \(\Aut_L(X)\) depends on the genus of the curve and
is more subtle.
\end{proof}

\end{document}